\newtheorem{theorem}{Theorem}
\newtheorem{proposition}{Proposition}
\newtheorem{remark}{Remark}
\newtheorem{corollary}{Corollary}
\newtheorem{lemma}{Lemma}
\DeclareMathOperator{\ad}{ad}
\newcounter{table_counter}	
\newcommand{\Table}{\refstepcounter{table_counter}\newline\newline{\it Таблица \arabic{table_counter}}.}
\begin{document}
\vspace{0.5cm}
\title[Abnormal extremals of sub-Finsler  $q$-metrics on Lie groups]{Abnormal extremals of left-invariant sub-Finsler quasimetrics on  four-dimensional Lie groups with three-dimensional generating distributions}
\author{V.~N.~Berestovskii, I.~A.~Zubareva}
\thanks{The first author is supported by the Mathematical Center in Akademgorodok under Agreement No. 075-15-2019-1613 with the Ministry of Science and Higher Education of the Russian Federation.
The second author is supported by the program of fundamental scientific investigations of SD RAS № I.1.1., project № 0314--2019--0004.}
\address{Sobolev Institute of Mathematics,\newline
	4 Koptyug Av., Novosibirsk, 630090, Russia}
\email{valeraberestovskii@gmail.com}
\address{Sobolev Institute of Mathematics,\newline
	13 Pevtsov Str., Omsk, 644099, Russia}
\email{i\_gribanova@mail.ru}	
	
\begin{abstract}
We find three-dimensional subspaces of four-dimensional connected Lie algebras, generating these algebras, and 
abnormal extremals on connected Lie groups with these Lie algebras and with left-invariant sub-Finsler quasimetrics defined by seminorms on such subspaces. 
In terms of the structure constants of Lie algebras and dual seminorms, we establish a criterion for the strong abnormality of these extremals. 	
 \vspace{2mm}

\noindent Mathematics Subject Classification (2010): 53C17, 53C22, 53C60, 49J15.
\vspace{2mm}

\noindent {\it Keywords and phrases:}  extremal, left-invariant sub-Finsler quasimetric, Lie algebra, 
optimal control, polar, Pontryagin maximum principle, (strongly) abnormal extremal, time-optimal control problem.
\end{abstract}

\maketitle
		
\section*{Introduction}

In 	\cite{Ber1}	was indicated that the shortest arcs of a left-invariant sub-Finsler metric $d$ on a 
connected Lie group $G$, defined by a left--invariant bracket generating distribution $D$ and a norm $F$ on 
$D(e)=\mathfrak{q}\subset TG_e=\mathfrak{g}$, are solutions of a left-invariant time-optimal problem for the 
closed unit ball with zero center of the normed vector space $(D(e),F)$  as the control region.
The  distribution $D$ is bracket generating if and only if the subspace $\mathfrak{q}$ generates 
the Lie algebra $(\mathfrak{g},[\cdot,\cdot])$ by the Lie bracket $[\cdot,\cdot].$ 
Moreover, the statements about shortest arcs are also true for a pair  $(D(e),F)$ with a seminorm $F$ which satisfies $F(u)>0$ for $0\neq u\in D(e)$ and defines a left-invariant sub-Finsler quasimetric $d$ on $G$. 

The Pontryagin Maximum Principle (PMP) \cite{PBGM} gives some necessary conditions for solutions to the time-optimal control problem. 
An extremal is a curve in $G$ which is parameterized by the arclength and satisfies the PMP. 
		
An extremal can be normal or abnormal. Some extremals can be both normal and abnormal with respect to different covector functions in PMP; such extremals are called {\sl  nonstronly abnormal}. An abnormal extremal that is not nonstronglyy abnormal is {\sl called strongly abnormal}.
		
In this paper, we solve the search problem for abnormal extremals on four-dimensional connected Lie groups with a left-invariant sub-Finsler quasimetric
defined by a seminorm on a three-dimensional subspace $\mathfrak{q}$ of the Lie algebra of each such group generating the Lie algebra by the Lie bracket $[\cdot,\cdot].$ We establish a criterion for the nonstrong abnormality of these extremals which also allows us to formulate the criterion of their strong abnormality. Since the quasimetric is left-invariant, we can assume that extremals issue from the unit of the group. Each of these abnormal extremals is some one-parametric subgroup of the Lie group. Moreover, the abnormal extremal in $(G,d)$ with  a left-invariant sub-Riemannian metric $d$ is a geodesic, i.e. its sufficiently small segments are the shortest arcs, if and only if this extremal is nonstrongly abnormal.

A four-dimensional Lie group $G$ has abnormal extremals for all seminorms $F$ of the above-indicated form on $\mathfrak{q}\subset\mathfrak{g}$
if  $\dim(\mathfrak{q})=3$ and $\mathfrak{q}$ generates $\mathfrak{g}$. In this paper, we consider such Lie groups $G$ and $\mathfrak{q}\subset\mathfrak{g}$. On the ground of papers \cite{Ber88} and \cite{Mil}, for every dimension $n > 2$ there exist exactly two real Lie algebras that do not admit $(n-1)$-dimensional subspaces generating these Lie algebras.
For the remaining Lie algebras, we find up to their
automorphisms the number of three-dimensional subspaces generating these algebras.

All main results of the paper are obtained by using results of 
\cite{Ber1}, \cite{PBGM}, \cite{Ber}, \cite{BerZub}, \cite{M}, \cite{BR}, 
\cite{Patera}, \cite{BerZub21}.
In the last of the listed papers we considered 	a similar problem for two-dimensional bracket generating subspaces and gave  some detailed commentary on the four-dimensional Lie algebras.

\section{General algebraic results}
\label{alg}
		
The following proposition is obvious.		
		
\begin{proposition}
\label{gen}	
A $(n-1)$-dimensional subspace $\mathfrak{q}$ of a real Lie algebra $\mathfrak{g}$ of the dimension $n > 2$ generates
this algebra (by the Lie bracket $[\cdot,\cdot]$) if and only if the subspace $\mathfrak{q}$ isn't a subalgebra of the Lie algebra $\mathfrak{g}.$
\end{proposition}
		
\begin{proposition}
\label{3sa}
A $(n-1)$-dimensional subspace $\mathfrak{s}$ of a $n$-dimensional  Lie algebra $\mathfrak{g},$ $n\geq 3$, is its subalgebra if the
dimension of the intersection of $\mathfrak{s}$ with its normalizer $\mathfrak{N}(\mathfrak{s})$ in the Lie algebra $\mathfrak{g}$ is at least $n-2$.
\end{proposition}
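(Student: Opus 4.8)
The plan is to prove directly that $[\mathfrak{s},\mathfrak{s}]\subseteq\mathfrak{s}$, exploiting that $\mathfrak{s}\cap\mathfrak{N}(\mathfrak{s})$ has codimension at most one in $\mathfrak{s}$. Recall that the normalizer of the subspace $\mathfrak{s}$ is $\mathfrak{N}(\mathfrak{s})=\{x\in\mathfrak{g}:[x,\mathfrak{s}]\subseteq\mathfrak{s}\}$, so $[u,s]\in\mathfrak{s}$ whenever $u\in\mathfrak{N}(\mathfrak{s})$ and $s\in\mathfrak{s}$. I would set $W=\mathfrak{s}\cap\mathfrak{N}(\mathfrak{s})$. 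By hypothesis $\dim W\geq n-2$, while $\dim\mathfrak{s}=n-1$, so $W$ has codimension $0$ or $1$ in $\mathfrak{s}$.

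First I would dispose of the case $W=\mathfrak{s}$: then $\mathfrak{s}\subseteq\mathfrak{N}(\mathfrak{s})$, which is literally the assertion $[\mathfrak{s},\mathfrak{s}]\subseteq\mathfrak{s}$, so $\mathfrak{s}$ is a subalgebra and nothing more is needed. In the remaining case $\dim W=n-2$, I would fix any vector $v\in\mathfrak{s}\setminus W$ and write $\mathfrak{s}=W\oplus\mathbb{R}v$. Taking arbitrary $x=w_1+av$ and $y=w_2+bv$ in $\mathfrak{s}$ with $w_1,w_2\in W$ and $a,b\in\mathbb{R}$, bilinearity of the bracket gives
\[
[x,y]=[w_1,w_2]+b[w_1,v]+a[v,w_2]+ab[v,v].
\]
Each of the first three terms involves a factor $w_1$ or $w_2$ lying in $W\subseteq\mathfrak{N}(\mathfrak{s})$ bracketed against an element of $\mathfrak{s}$, hence lies in $\mathfrak{s}$ (for the third term one writes $[v,w_2]=-[w_2,v]\in\mathfrak{s}$). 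The last term vanishes since $[v,v]=0$. Thus $[x,y]\in\mathfrak{s}$ for all $x,y\in\mathfrak{s}$, and $\mathfrak{s}$ is a subalgebra.

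There is no substantial obstacle here; the whole point — and the reason the bound $n-2$ rather than $n-1$ already suffices — is that the single "missing" direction $v$ can enter a product of two elements of $\mathfrak{s}$ quadratically only through $[v,v]$, which the antisymmetry of the Lie bracket kills automatically. So the one unit of codimension that $W$ is allowed to have inside $\mathfrak{s}$ costs nothing, and the verification collapses to the bracket expansion above.
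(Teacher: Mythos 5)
Your proof is correct and follows exactly the idea the paper's one-line proof alludes to: the intersection $W=\mathfrak{s}\cap\mathfrak{N}(\mathfrak{s})$ has codimension at most $1$ in $\mathfrak{s}$, so the only bracket not automatically landing in $\mathfrak{s}$ is $[v,v]=0$. You have simply written out in full the bracket expansion that the paper leaves implicit.
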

		
\begin{proof}
This statement follows from the fact that the codimension of the mentioned intersection	in $\mathfrak{s}$ is at most 1.
\end{proof}
		
Propositions \ref{gen} and \ref{3sa} imply	
		
\begin{proposition}
\label{por}
If a $(n-1)$-dimensional subspace $\mathfrak{q}$ generates a $n$-dimensional real Lie algebra $\mathfrak{g},$ $n\geq 3$, then $\dim(\mathfrak{q}\cap \mathfrak{N}(\mathfrak{q}))\leq n-3$
and $\mathfrak{q}$ isn't a subalgebra of the Lie algebra $\mathfrak{g}$.
Consequently, the dimension of the intersection  of $\mathfrak{q}$ with any $(n-1)$--dimensional subalgebra of the Lie algebra $\mathfrak{g}$ (respectively, with any its $(n-2)$-dimensional ideal)
is equal to $n-2$ (respectively, $n-3$).
\end{proposition}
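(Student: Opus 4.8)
The plan is to derive all four conclusions from the two preceding propositions together with elementary dimension counting in the $n$-dimensional space $\mathfrak{g}$. First I would dispose of the two assertions that are essentially immediate. Since $\mathfrak{q}$ generates $\mathfrak{g}$, Proposition \ref{gen} says at once that $\mathfrak{q}$ is not a subalgebra, which is the second claim. Taking the contrapositive of Proposition \ref{3sa} --- a $(n-1)$-dimensional subspace with $\dim(\mathfrak{s}\cap\mathfrak{N}(\mathfrak{s}))\geq n-2$ is a subalgebra --- a subspace that fails to be a subalgebra must satisfy $\dim(\mathfrak{q}\cap\mathfrak{N}(\mathfrak{q}))\leq n-3$, which is the first claim.

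For the clause about an $(n-1)$-dimensional subalgebra $\mathfrak{h}$, I would argue purely by dimensions. Because $\mathfrak{q}$ is not a subalgebra while $\mathfrak{h}$ is, the two hyperplanes $\mathfrak{q}$ and $\mathfrak{h}$ are distinct; hence $\mathfrak{q}+\mathfrak{h}=\mathfrak{g}$, and the Grassmann formula gives $\dim(\mathfrak{q}\cap\mathfrak{h})=(n-1)+(n-1)-n=n-2$.

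The clause about an $(n-2)$-dimensional ideal $\mathfrak{i}$ requires slightly more care, and this is the one genuine point of the argument. The dimension formula yields the answer $n-3$ as soon as one knows $\mathfrak{i}\not\subseteq\mathfrak{q}$, so the task reduces to excluding the inclusion $\mathfrak{i}\subseteq\mathfrak{q}$. Here I would exploit that $\mathfrak{i}$ is an ideal: for every $x\in\mathfrak{i}$ one has $[x,\mathfrak{q}]\subseteq[\mathfrak{i},\mathfrak{g}]\subseteq\mathfrak{i}\subseteq\mathfrak{q}$, so $x$ normalizes $\mathfrak{q}$, giving $\mathfrak{i}\subseteq\mathfrak{N}(\mathfrak{q})$. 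Were $\mathfrak{i}\subseteq\mathfrak{q}$, this would force $\mathfrak{i}\subseteq\mathfrak{q}\cap\mathfrak{N}(\mathfrak{q})$ and hence $\dim(\mathfrak{q}\cap\mathfrak{N}(\mathfrak{q}))\geq n-2$, contradicting the bound just established. Therefore $\mathfrak{i}\not\subseteq\mathfrak{q}$, so $\dim(\mathfrak{q}+\mathfrak{i})=n$ and $\dim(\mathfrak{q}\cap\mathfrak{i})=(n-1)+(n-2)-n=n-3$.

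The main obstacle is precisely this last exclusion; everything else is immediate dimension bookkeeping. The decisive observation is that an ideal contained in $\mathfrak{q}$ automatically lands inside $\mathfrak{q}\cap\mathfrak{N}(\mathfrak{q})$, so the bound $\dim(\mathfrak{q}\cap\mathfrak{N}(\mathfrak{q}))\leq n-3$ from the first part is exactly what rules it out.
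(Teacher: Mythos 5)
Your proof is correct and follows exactly the route the paper intends: the paper simply states that Proposition~\ref{por} follows from Propositions~\ref{gen} and~\ref{3sa}, and your argument is the full derivation of that claim, with the only nontrivial step --- that an $(n-2)$-dimensional ideal contained in $\mathfrak{q}$ would lie in $\mathfrak{q}\cap\mathfrak{N}(\mathfrak{q})$ and violate the bound $\dim(\mathfrak{q}\cap\mathfrak{N}(\mathfrak{q}))\leq n-3$ --- handled correctly. Nothing is missing.
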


The following proposition follows from the proof of Theorem 4 in \cite{Ber88} based on \cite{Mil}.
		
\begin{proposition}
\label{gen1}	
A real Lie algebra $\mathfrak{g}$ of dimension $n > 2$ has no  $(n-1)$-dimensional bracket generating subspace if and only if
$\mathfrak{g}$ is commutative or $\mathfrak{g}$ includes a commutative $(n-1)$-dimensional ideal  $I$ and an
element $z$ such that the operator $\ad z$ acts identically on $I.$ 
\end{proposition}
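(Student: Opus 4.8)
The plan is to reduce the statement, via Proposition \ref{gen}, to a purely structural question: a Lie algebra $\mathfrak g$ of dimension $n>2$ has no $(n-1)$-dimensional bracket generating subspace if and only if \emph{every} hyperplane (codimension-one subspace) of $\mathfrak g$ is a subalgebra. Indeed, by Proposition \ref{gen} a hyperplane generates $\mathfrak g$ precisely when it is not a subalgebra, so ``no generating hyperplane'' means ``all hyperplanes are subalgebras.'' I would then prove the equivalence with the two listed normal forms in this reformulated language.

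For the easy direction ($\Leftarrow$) I would check directly that both families have every subspace closed under the bracket. If $\mathfrak g$ is commutative this is trivial. In the second case, writing $\mathfrak g=I\oplus\mathbb Rz$ with $I$ a commutative ideal and $\ad z|_I=\mathrm{id}$, a short computation shows that for $g=x+sz$ and $h=y+tz$ with $x,y\in I$ one has $[g,h]=sy-tx$; introducing the functional $\lambda$ with $\ker\lambda=I$ and $\lambda(z)=1$ this reads
\[
[g,h]=\lambda(g)\,h-\lambda(h)\,g,
\]
so $[g,h]\in\mathrm{span}(g,h)$ and every subspace, in particular every hyperplane, is a subalgebra.

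The substance is the forward direction ($\Rightarrow$), and the main obstacle is to extract rigidity from the hypothesis that all hyperplanes are subalgebras. The key step I would isolate is that this hypothesis forces $[x,y]\in\mathrm{span}(x,y)$ for all $x,y\in\mathfrak g$. I would argue by contradiction: if $x,y$ are independent with $[x,y]\notin\mathrm{span}(x,y)$, then since $\dim\bigl(\mathfrak g/\mathrm{span}(x,y)\bigr)=n-2\ge 1$ I can choose a codimension-one subspace $H\supseteq\mathrm{span}(x,y)$ with $[x,y]\notin H$ (pull back a suitable hyperplane of the quotient); this $H$ contains $x,y$ but not $[x,y]$, contradicting that $H$ is a subalgebra. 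This is exactly the place where the hypothesis $n>2$ is essential.

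Once $[x,y]\in\mathrm{span}(x,y)$ holds identically, the remaining steps are routine. For each $x$ the operator induced by $\ad x$ on $\mathfrak g/\mathbb Rx$ has every vector as an eigenvector, hence is a scalar $\lambda(x)$; combining this with antisymmetry of the bracket yields a single linear functional $\lambda\in\mathfrak g^*$ with $[x,y]=\lambda(x)y-\lambda(y)x$, and one checks that the Jacobi identity is then automatically satisfied. Finally I would split on $\lambda$: if $\lambda=0$ the algebra is commutative, while if $\lambda\ne0$ then $I:=\ker\lambda$ is a commutative $(n-1)$-dimensional ideal and any $z$ with $\lambda(z)=1$ satisfies $\ad z|_I=\mathrm{id}$, which is precisely the second alternative. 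This recovers the normal forms coming from Theorem 4 of \cite{Ber88} and \cite{Mil}.
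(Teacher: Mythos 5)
Your proof is correct. Note that the paper does not actually write out an argument for Proposition \ref{gen1}: it only records that the statement follows from the proof of Theorem 4 in \cite{Ber88}, which in turn rests on Milnor's observation in \cite{Mil} that a Lie algebra satisfying $[x,y]\in\mathrm{span}(x,y)$ for all $x,y$ is either commutative or of the form $[x,y]=\lambda(x)y-\lambda(y)x$ for a linear functional $\lambda$. Your argument is essentially that classical argument written out in full; the ingredient you supply that the paper leaves implicit is the reduction step, namely using Proposition \ref{gen} to restate the hypothesis as ``every hyperplane is a subalgebra'' and then the separation argument (a hyperplane through $\mathrm{span}(x,y)$ missing $[x,y]$ exists exactly because $n>2$) to force $[x,y]\in\mathrm{span}(x,y)$. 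Both directions check out: the computation $[g,h]=sy-tx=\lambda(g)h-\lambda(h)g$ in the converse is right, and the scalar-operator argument on $\mathfrak{g}/\mathbb{R}x$ is valid since $\dim(\mathfrak{g}/\mathbb{R}x)=n-1\geq 2$. Two cosmetic remarks: the observation that the Jacobi identity is ``automatically satisfied'' is superfluous, since the bracket is given as a Lie bracket to begin with; and the linearity of $\lambda$ deserves one explicit line (compare $[x_1+x_2,y]$ with $[x_1,y]+[x_2,y]$ for some $y\notin\mathrm{span}(x_1,x_2)$, which again exists because $n>2$). So your proposal gives a complete, self-contained, elementary proof of a statement for which the paper only cites the literature.
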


The following proposition  was proved in \cite{BerZub21}.
		
\begin{proposition}
\label{threed}
A four-dimensional connected Lie group  $G$ with a Lie algebra $\mathfrak{g}$ and a three-dimensional generating subspace $\mathfrak{q}\subset\mathfrak{g}$
has abnormal extremals (for arbitrary left-invariant quasimetric $d$ on $G$ defined by a seminorm $F$ on $\mathfrak{q})$ if and only if
$\mathfrak{q}_1=\mathfrak{q}\cap\mathfrak{N}(\mathfrak{q})\neq\{0\},$ where $\mathfrak{N}(\mathfrak{q})$ is the normalizer of $\mathfrak{q}$ in
$\mathfrak{g}$. Furthermore, $\dim(\mathfrak{q}_1)=1$ and every one-parameter subgroup $g=g(t)=\exp(tX),$ where $X\in\mathfrak{q}_1,$ $F(X)=1,$ is an abnormal extremal for $(G,d);$ there is no other abnormal extremal with origin $e\in G$. Moreover, the extremal $g$ is strongly abnormal 
(nonstrongly abnormal) for any quasimetric $d$ if and only if $\mathfrak{q}_1\subset [\mathfrak{q}_1,\mathfrak{q}]$ 
(respectively, $\mathfrak{q}_1=\mathfrak{q}\cap\mathfrak{C}(\mathfrak{q}),$ where $\mathfrak{C}(\mathfrak{q})$ is the centralizer of 
$\mathfrak{q}$ in $\mathfrak{g}$.)
\end{proposition}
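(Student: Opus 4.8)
The plan is to run the Pontryagin Maximum Principle (PMP) for the left-invariant time-optimal problem with control set $U=\{u\in\mathfrak{q}:F(u)\le1\}$ --- a convex body with $0$ in its interior since $F(u)>0$ for $u\ne0$ --- and to exploit that for $n=4$, $\dim\mathfrak{q}=3$ the annihilator $\mathfrak{q}^{\perp}\subset\mathfrak{g}^*$ is one-dimensional. Writing PMP in covector form, an extremal is a trajectory of $\dot g=g\,u(t)$ together with a nonzero $\psi(t)\in\mathfrak{g}^*$ and $\psi_0\le0$ satisfying $\dot\psi=\ad^*_{u(t)}\psi$, where $\langle\ad^*_u\psi,v\rangle=\langle\psi,[u,v]\rangle$, and the maximum condition $\langle\psi(t),u(t)\rangle=\max_{v\in U}\langle\psi(t),v\rangle$ with $\psi_0+\max_{v\in U}\langle\psi(t),v\rangle=0$. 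The extremal is abnormal exactly when $\psi_0=0$; then $\max_{v\in U}\langle\psi(t),v\rangle=0$, and since $0\in\operatorname{int}U$ this forces $\psi(t)|_{\mathfrak{q}}=0$, i.e. $\psi(t)\in\mathfrak{q}^{\perp}$ for all $t$. As $\dim\mathfrak{q}^{\perp}=1$, I fix $0\ne\psi_*\in\mathfrak{q}^{\perp}$ and write $\psi(t)=\lambda(t)\psi_*$, noting $\ker\psi_*=\mathfrak{q}$ because $\mathfrak{q}\subseteq\ker\psi_*$ and both are $3$-dimensional.

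Next, the adjoint equation with $\psi(t)=\lambda(t)\psi_*$ forces $\ad^*_{u(t)}\psi_*\in\mathbb{R}\psi_*$. I would show this holds iff $u(t)\in\mathfrak{N}(\mathfrak{q})$: if $\ad^*_u\psi_*=c\psi_*$ then $\langle\psi_*,[u,v]\rangle=0$ for $v\in\mathfrak{q}$, so $[u,\mathfrak{q}]\subseteq\ker\psi_*=\mathfrak{q}$; conversely $[u,\mathfrak{q}]\subseteq\mathfrak{q}=\ker\psi_*$ makes $v\mapsto\langle\psi_*,[u,v]\rangle$ vanish on $\mathfrak{q}$ and hence proportional to $\psi_*$. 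Together with $u(t)\in\mathfrak{q}$ this gives $u(t)\in\mathfrak{q}_1:=\mathfrak{q}\cap\mathfrak{N}(\mathfrak{q})$. Consequently an abnormal extremal exists iff $\mathfrak{q}_1\ne\{0\}$; Proposition \ref{por} gives $\dim\mathfrak{q}_1\le n-3=1$, so $\dim\mathfrak{q}_1=1$, say $\mathfrak{q}_1=\mathbb{R}X$. Taking $F(X)=1$ and the constant control $u\equiv X$ solves $\dot\lambda=c\lambda$ with $\lambda\ne0$ and yields the one-parameter subgroup $g(t)=\exp(tX)$; since $u(t)$ must lie on the line $\mathbb{R}X$, no other abnormal extremal issues from $e$. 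This settles every assertion except the strong/nonstrong dichotomy.

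For the last part, $g$ is nonstrongly abnormal iff the same curve admits a normal covector, i.e. $\psi_0=-1$ and some $\tilde\psi(t)$ with $\dot{\tilde\psi}=\ad^*_X\tilde\psi$ and $\langle\tilde\psi(t),X\rangle=\max_{v\in U}\langle\tilde\psi(t),v\rangle=1$. Writing $\xi(t):=\tilde\psi(t)|_{\mathfrak{q}}\in\mathfrak{q}^*$ and $B:=\ad_X|_{\mathfrak{q}}\colon\mathfrak{q}\to\mathfrak{q}$ (well defined as $X\in\mathfrak{N}(\mathfrak{q})$), the restricted adjoint equation closes up to $\dot\xi=B^*\xi$, the maximum condition involves only $\xi$, and $BX=[X,X]=0$ gives $\langle\xi(t),X\rangle\equiv\langle\xi(0),X\rangle$. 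If $\mathfrak{q}_1=\mathfrak{q}\cap\mathfrak{C}(\mathfrak{q})$, i.e. $B=0$, then $\xi$ is constant and any normalized supporting functional of $U$ at $X$ (which exists for every $F$, with positive value since $0\in\operatorname{int}U$) yields a normal covector, so $g$ is nonstrongly abnormal for every $d$. If instead $\mathfrak{q}_1\subseteq[\mathfrak{q}_1,\mathfrak{q}]$, i.e. $X=By=[X,y]$ with $y\in\mathfrak{q}$, then $\tfrac{d}{dt}\langle\xi(t),y\rangle=\langle\xi(t),By\rangle=\langle\xi(t),X\rangle\equiv1$, so $\langle\xi(t),y\rangle$ grows linearly; since $\pm\delta y\in U$ for small $\delta>0$, the condition $\langle\xi(t),\cdot\rangle\le1$ on $U$ fails for large $t$, for every $F$, so $g$ is strongly abnormal for every $d$. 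The converses require choosing a witnessing quasimetric: if $B\ne0$ I would take a smooth strictly convex $F$ whose unique outer normal at $X$ is not annihilated by $B^*$, so no nonconstant $\xi(t)$ can keep supporting $U$ at $X$ (not nonstrong for that $d$); if $X\notin\operatorname{im}B$ I would pick $\xi_0\in(\operatorname{im}B)^{\perp}$ with $\langle\xi_0,X\rangle=1$ (possible precisely because $X\notin\operatorname{im}B$), so $B^*\xi_0=0$, $\xi\equiv\xi_0$, and build a convex $U$ supported at $X$ by $\xi_0$ (nonstrong for that $d$).

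The main obstacle I anticipate is not the algebra but the convex-geometric bookkeeping behind the phrase ``for any quasimetric $d$'': one must produce, for each required conclusion, either a construction of a seminorm $F$ whose unit sphere has a prescribed outer normal at $X$ (to realize a normal covector, hence nonstrong abnormality) or an argument valid for all $F$ simultaneously (the boundedness and support obstructions above). Checking that supporting functionals exist, can be normalized so that $\langle\xi,X\rangle=1$, and that the flow $e^{tB^*}$ interacts with the normal cone of $U$ at $X$ exactly as claimed is where the care lies; everything else rests on the one-dimensionality of $\mathfrak{q}^{\perp}$ and on $X\in\ker(\ad_X|_{\mathfrak{q}})$.
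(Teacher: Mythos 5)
Your proof is correct, and it takes a genuinely different route from the paper's treatment. In fact the paper does not reprove Proposition \ref{threed} at all (it cites \cite{BerZub21}); what it does prove are the existence/classification part as Proposition \ref{prop1} and the seminorm-dependent refinement as Theorem \ref{main1}, both by fixing the basis of Lemma \ref{base} and explicitly integrating the system (\ref{psi1})--(\ref{psi4}) in structure constants. You argue coordinate-freely: abnormality together with $0\in\operatorname{int}U$ forces $\psi(t)$ into the line $\mathfrak{q}^{\perp}$, invariance of that line under $\ad^*_{u(t)}$ is exactly $u(t)\in\mathfrak{N}(\mathfrak{q})$, and the whole strong/nonstrong dichotomy is reduced to the single operator $B=\ad_X|_{\mathfrak{q}}$. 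Where the paper decides nonstrong abnormality by solving a second-order ODE for $\psi_1$ and demanding boundedness of the solution on $\mathbb{R}$, you get ``strongly abnormal for all $d$'' from the one-line computation $\tfrac{d}{dt}\langle\xi,y\rangle=\langle\xi,By\rangle=\langle\xi,X\rangle\equiv 1$ (linear growth against boundedness of $\langle\xi(t),\cdot\rangle$ on $U\ni\delta y$), and you obtain both converses by constructing witnessing bodies: an ellipsoid whose unique outer normal at $X$ avoids $\ker B^{*}=(\operatorname{im}B)^{\perp}$ when $B\neq 0$, respectively a body supported at $X$ by some $\xi_0\in(\operatorname{im}B)^{\perp}$ with $\langle\xi_0,X\rangle=1$ when $X\notin\operatorname{im}B$. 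This yields the ``for any quasimetric'' statements directly, whereas the paper reaches them only through the sharper, $F$-dependent Theorem \ref{main1}. Two small points to make explicit for full rigor: (i) the normal covector must also solve the remaining scalar adjoint equation for the fourth coordinate, a linear first-order ODE that is always globally solvable and hence imposes no constraint (the paper writes this step out in the sufficiency part of Theorem \ref{main1}); (ii) in the classification of abnormal extremals the control is only forced into the two points of $\{F=1\}$ on the line $\mathfrak{q}_1$, so measurable switching between them is not excluded a priori --- it still traces out the same one-parameter subgroup, an implicit convention your argument shares with the paper's Proposition \ref{prop1}.
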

		
\begin{proposition}
\label{cq}	
$\mathfrak{C}(\mathfrak{q})=\mathfrak{C}(\mathfrak{g}).$ 
\end{proposition}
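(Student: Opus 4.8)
The plan is to prove the two inclusions separately, the first being immediate and the second exploiting the generating property of $\mathfrak{q}$ via the Jacobi identity. Since $\mathfrak{q}\subseteq\mathfrak{g}$, the condition $[X,Y]=0$ for all $Y\in\mathfrak{g}$ is stronger than the condition $[X,Y]=0$ for all $Y\in\mathfrak{q}$; hence $\mathfrak{C}(\mathfrak{g})\subseteq\mathfrak{C}(\mathfrak{q})$ with no further work. The whole content of the statement is therefore the reverse inclusion $\mathfrak{C}(\mathfrak{q})\subseteq\mathfrak{C}(\mathfrak{g})$.

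For that inclusion I would fix an arbitrary $X\in\mathfrak{C}(\mathfrak{q})$ and consider the set $\mathfrak{h}_X=\{Z\in\mathfrak{g}:[X,Z]=0\}$, i.e.\ the centralizer of the single element $X$. The key observation is that $\mathfrak{h}_X$ is a Lie subalgebra of $\mathfrak{g}$: if $Z_1,Z_2\in\mathfrak{h}_X$, then by the Jacobi identity
\[
[X,[Z_1,Z_2]]=[[X,Z_1],Z_2]+[Z_1,[X,Z_2]]=[0,Z_2]+[Z_1,0]=0,
\]
so $[Z_1,Z_2]\in\mathfrak{h}_X$, and $\mathfrak{h}_X$ is obviously a linear subspace. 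By the definition of $\mathfrak{C}(\mathfrak{q})$ we have $\mathfrak{q}\subseteq\mathfrak{h}_X$.

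The decisive step is then to invoke the hypothesis that $\mathfrak{q}$ generates $\mathfrak{g}$ by the Lie bracket: since $\mathfrak{h}_X$ is a subalgebra containing the generating subspace $\mathfrak{q}$, it must coincide with the whole algebra, $\mathfrak{h}_X=\mathfrak{g}$. This says precisely that $[X,Z]=0$ for every $Z\in\mathfrak{g}$, i.e.\ $X\in\mathfrak{C}(\mathfrak{g})$. As $X\in\mathfrak{C}(\mathfrak{q})$ was arbitrary, this gives $\mathfrak{C}(\mathfrak{q})\subseteq\mathfrak{C}(\mathfrak{g})$, and together with the trivial inclusion it yields the desired equality.

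I do not anticipate a genuine obstacle here; the argument is short and purely algebraic. The only point that deserves care is making explicit that the centralizer of an element is closed under the bracket (which is where the Jacobi identity enters) and that ``$\mathfrak{q}$ generates $\mathfrak{g}$'' is used in the strong form ``the smallest subalgebra containing $\mathfrak{q}$ is $\mathfrak{g}$,'' so that containing $\mathfrak{q}$ forces a subalgebra to be everything. Note that this reasoning uses neither $\dim\mathfrak{g}=4$ nor $\dim\mathfrak{q}=3$, so the proposition in fact holds for any generating subspace of any Lie algebra; I would state it in the four-dimensional setting only because that is the context of the paper.
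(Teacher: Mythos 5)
Your proof is correct and is exactly the argument the paper compresses into its one-line proof (``a consequence of the Jacobi identity and the fact that $\mathfrak{q}$ generates $\mathfrak{g}$''): the Jacobi identity shows the centralizer of a fixed $X\in\mathfrak{C}(\mathfrak{q})$ is a subalgebra containing $\mathfrak{q}$, hence all of $\mathfrak{g}$. Your closing remark that the statement needs neither $\dim\mathfrak{g}=4$ nor $\dim\mathfrak{q}=3$ is also accurate.
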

		
\begin{proof}
This statement is a consequence of the Jacobi identity and the fact that $\mathfrak{q}$ generates $\mathfrak{g}.$		
\end{proof}
		
\begin{lemma} 
\label{base}	
Let  $(\mathfrak{g},[\cdot,\cdot])$ be a four-dimensional real Lie algebra, $\mathfrak{q}\subset \mathfrak{g}$ be a three-dimensional subspace generating $\mathfrak{g}$ by the Lie bracket  $[\cdot,\cdot]$. Then there exists (the unique) one-dimensional subspace  $\mathfrak{q}_1=\mathfrak{q}\cap\mathfrak{N}(\mathfrak{q}),$ where $\mathfrak{N}(\mathfrak{q})$ is the normalizer of $\mathfrak{q}$ in $\mathfrak{g}.$
Furthermore, $[\mathfrak{p},\mathfrak{p}]\not\subset \mathfrak{q}$ and $\dim([\mathfrak{p},\mathfrak{p}])=1$ for any two-dimensional subspace $\mathfrak{p}\subset \mathfrak{q}$ such that $\mathfrak{p}\cap \mathfrak{q}_1=\{0\}.$ In other words, for any linearly independent vectors $e_1,e_3\in\mathfrak{p}$ and a nonzero vector $e_2\in\mathfrak{q}_1$, the vectors $e_1,e_2,e_3,e_4=[e_1,e_3]$ constitute a basis of the Lie algebra $\mathfrak{g}$ and for this basis
\begin{equation}
\label{fghl}
C_{13}^1=C_{13}^2=C_{13}^3=0,\,\,\,C_{13}^4=1,\,\,\,C_{12}^4=C_{23}^4=0.
\end{equation}
\end{lemma}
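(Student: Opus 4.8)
The plan is to encode the whole structure in a single alternating bilinear form and read everything off from its linear algebra. Since $\dim\mathfrak{g}=4$ and $\dim\mathfrak{q}=3$, the quotient $\mathfrak{g}/\mathfrak{q}$ is one-dimensional, so I would introduce the skew-symmetric form
\[
\beta\colon \mathfrak{q}\times\mathfrak{q}\to\mathfrak{g}/\mathfrak{q},\qquad \beta(x,y)=[x,y]+\mathfrak{q},
\]
regarded, after fixing an identification $\mathfrak{g}/\mathfrak{q}\cong\mathbb{R}$, as a scalar-valued alternating form on the three-dimensional space $\mathfrak{q}$.

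First I would observe that the radical of $\beta$ coincides with $\mathfrak{q}_1=\mathfrak{q}\cap\mathfrak{N}(\mathfrak{q})$: an element $x\in\mathfrak{q}$ lies in the radical if and only if $[x,y]\in\mathfrak{q}$ for all $y\in\mathfrak{q}$, which is precisely the condition $x\in\mathfrak{N}(\mathfrak{q})$. The crucial step is then a parity argument. A skew-symmetric form has even rank, so on the three-dimensional $\mathfrak{q}$ the rank of $\beta$ is $0$ or $2$. Rank $0$ would mean $\beta\equiv 0$, i.e. $[\mathfrak{q},\mathfrak{q}]\subset\mathfrak{q}$, making $\mathfrak{q}$ a subalgebra, which is impossible by Proposition \ref{gen} since $\mathfrak{q}$ generates $\mathfrak{g}$. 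Hence the rank is exactly $2$ and the radical $\mathfrak{q}_1$ is one-dimensional; this gives both the existence and the uniqueness of the claimed subspace, and recovers the bound of Proposition \ref{por}.

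Next, for a two-dimensional subspace $\mathfrak{p}\subset\mathfrak{q}$ with $\mathfrak{p}\cap\mathfrak{q}_1=\{0\}$, the restriction $\beta|_{\mathfrak{p}\times\mathfrak{p}}$ is nondegenerate, since $\mathfrak{p}$ meets the radical trivially, and therefore nonzero on the two-dimensional $\mathfrak{p}$. This yields $[\mathfrak{p},\mathfrak{p}]\not\subset\mathfrak{q}$ at once. Moreover, for a two-dimensional subspace every bracket $[au+bv,\,cu+dv]$ equals $(ad-bc)[u,v]$, so $[\mathfrak{p},\mathfrak{p}]$ is spanned by the single vector $[e_1,e_3]$ for any basis $e_1,e_3$ of $\mathfrak{p}$; as this vector is nonzero (it is not even in $\mathfrak{q}$), we get $\dim([\mathfrak{p},\mathfrak{p}])=1$.

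Finally I would assemble the basis. Taking $e_1,e_3$ a basis of $\mathfrak{p}$, a nonzero $e_2\in\mathfrak{q}_1$, and $e_4:=[e_1,e_3]$, the decomposition $\mathfrak{q}=\mathfrak{p}\oplus\mathfrak{q}_1$ makes $e_1,e_2,e_3$ a basis of $\mathfrak{q}$, while $e_4\notin\mathfrak{q}$ by the previous step, so $e_1,e_2,e_3,e_4$ is a basis of $\mathfrak{g}$. The relations $C_{13}^1=C_{13}^2=C_{13}^3=0$, $C_{13}^4=1$ are merely the definition $e_4=[e_1,e_3]$. For the last two I would use $e_2\in\mathfrak{q}_1\subset\mathfrak{N}(\mathfrak{q})$: since $e_1,e_3\in\mathfrak{q}$, both $[e_1,e_2]=-[e_2,e_1]$ and $[e_2,e_3]$ lie in $\mathfrak{q}=\operatorname{span}\{e_1,e_2,e_3\}$, which carries no $e_4$-component, whence $C_{12}^4=C_{23}^4=0$. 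The argument is essentially routine linear algebra; the only point needing genuine care is the parity/rank step, which is exactly where the hypothesis that $\mathfrak{q}$ generates $\mathfrak{g}$ (and is not merely a proper subspace) does the real work.
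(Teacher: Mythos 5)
Your proof is correct, and it is in substance the same argument as the paper's, but packaged more invariantly. The paper works with the alternating form $[\cdot,\cdot]\bmod\mathfrak{q}$ only implicitly: it first chooses $e_1,e_3$ with $[e_1,e_3]\notin\mathfrak{q}$ (possible because $\mathfrak{q}$ is not a subalgebra, by Proposition \ref{gen}), then takes an arbitrary complementary $e_2$ and replaces it by $e_2-C_{23}^4e_1-C_{12}^4e_3$ to exhibit by hand a spanning vector of the radical, and finally uses generation to rule out a larger radical. Your even-rank argument replaces that coordinate adjustment and delivers $\dim\mathfrak{q}_1=1$ (both the lower and the upper bound) in one stroke; this is cleaner and makes transparent exactly where the generating hypothesis enters. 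One place where your wording should be tightened: "meets the radical trivially" does not in general force nondegeneracy of the restriction of an alternating form to a subspace (a Lagrangian plane in a symplectic space meets the trivial radical trivially, yet the restriction vanishes). What saves you here is that $\dim\mathfrak{p}=2$ and $\dim\mathfrak{q}_1=1$ force $\mathfrak{q}=\mathfrak{p}\oplus\mathfrak{q}_1$, so $\mathfrak{p}$ is a complement of the radical and the restriction is automatically nondegenerate of rank $2$; you in fact use this decomposition in your last paragraph, so the gap is only in the phrasing. The remaining steps (the identity $[au+bv,cu+dv]=(ad-bc)[u,v]$ giving $\dim([\mathfrak{p},\mathfrak{p}])=1$, and $C_{12}^4=C_{23}^4=0$ from $e_2\in\mathfrak{N}(\mathfrak{q})$) coincide with the paper's.
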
	
		
\begin{proof}
Proposition \ref{gen} and the condition that $\mathfrak{q}$ generates $\mathfrak{g}$ imply that  $[\mathfrak{q},\mathfrak{q}]\not\subset \mathfrak{q},$ i.e. there exist some linearly independent vectors $e_1,e_3\in \mathfrak{q}$ such that $[e_1,e_3]\not\in\mathfrak{q}.$
Then for any vector $e_2\in\mathfrak{q}$ linearly independent with vectors
$e_1,e_3$ the vectors $e_1,e_2,e_3,e_4=[e_1,e_3]$ constitute a basis of $\mathfrak{g}.$ 
If at least one of the structure constants $C_{12}^4$, $C_{23}^4$ in this basis is nonzero then, 
replacing $e_2$ with $e_2-C_{23}^4e_1-C_{12}^4e_3$ and again denoting the last vector by
$e_2$, we get $[\mathfrak{p}_1,\mathfrak{q}]\subset\mathfrak{q}$ for a one-dimensional subspace 
$\mathfrak{p}_1\subset\mathfrak{q}$ spanned by the vector $e_2$. Since $\mathfrak{q}$ generates $\mathfrak{g},$ then $[\mathfrak{p},\mathfrak{p}]\not\subset \mathfrak{q}$ and $\dim([\mathfrak{p},\mathfrak{p}])=1$ for any two-dimensional subspace $\mathfrak{p}\subset \mathfrak{q}$ such that $\mathfrak{p}\cap \mathfrak{p}_1=\{0\}.$ It follows that  
$\mathfrak{p}_1=\mathfrak{q}\cap\mathfrak{N}(\mathfrak{q})=\mathfrak{q}_1.$
\end{proof}
		
\begin{corollary}
\label{act}	
Every four-dimensional connected Lie $G$ with a Lie algebra $\mathfrak{g}$ and a three-dimensional generating subspace $\mathfrak{q}\subset\mathfrak{g}$ has
abnormal extremals (for arbitrary left-invariant quasimetric $d$ on $G$ defined by some seminorm $F$ on $\mathfrak{q})$.
If $\mathfrak{q}$ contains a one-dimensional central (respectively, noncentral) ideal of the Lie algebra $\mathfrak{g}$, then every abnormal extremal is nonstrongly (respectively, strongly) abnormal.
\end{corollary}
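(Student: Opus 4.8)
The plan is to derive everything from Lemma \ref{base} together with Propositions \ref{threed} and \ref{cq}, since under the stated hypothesis the corollary is just a specialization of the abnormality criterion. For the existence assertion I would invoke Lemma \ref{base} directly: it guarantees that the subspace $\mathfrak{q}_1=\mathfrak{q}\cap\mathfrak{N}(\mathfrak{q})$ is one-dimensional, in particular $\mathfrak{q}_1\neq\{0\}$. By Proposition \ref{threed}, the condition $\mathfrak{q}_1\neq\{0\}$ is precisely what is required for the existence of abnormal extremals for every admissible seminorm $F$, and these extremals are exactly the one-parameter subgroups $\exp(tX)$ with $X\in\mathfrak{q}_1$.

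The key observation common to both remaining cases is that any one-dimensional ideal $\mathfrak{a}\subset\mathfrak{q}$ must coincide with $\mathfrak{q}_1$. Indeed, an ideal satisfies $[\mathfrak{a},\mathfrak{q}]\subseteq\mathfrak{a}\subseteq\mathfrak{q}$, so $\mathfrak{a}\subseteq\mathfrak{q}\cap\mathfrak{N}(\mathfrak{q})=\mathfrak{q}_1$; since both spaces are one-dimensional, $\mathfrak{a}=\mathfrak{q}_1$. With this in hand I would treat the central case as follows: here $\mathfrak{q}_1=\mathfrak{a}$ lies in $\mathfrak{C}(\mathfrak{g})=\mathfrak{C}(\mathfrak{q})$ by Proposition \ref{cq}, whence $\mathfrak{q}_1\subseteq\mathfrak{q}\cap\mathfrak{C}(\mathfrak{q})$; the reverse inclusion follows because the centralizer is contained in the normalizer, so $\mathfrak{q}\cap\mathfrak{C}(\mathfrak{q})\subseteq\mathfrak{q}\cap\mathfrak{N}(\mathfrak{q})=\mathfrak{q}_1$. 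Thus $\mathfrak{q}_1=\mathfrak{q}\cap\mathfrak{C}(\mathfrak{q})$, and Proposition \ref{threed} gives nonstrong abnormality.

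For the noncentral case I would first show $[\mathfrak{q}_1,\mathfrak{q}]\neq\{0\}$ by contradiction: if $[\mathfrak{q}_1,\mathfrak{q}]=\{0\}$, then $\mathfrak{q}_1\subseteq\mathfrak{C}(\mathfrak{q})=\mathfrak{C}(\mathfrak{g})$ by Proposition \ref{cq}, making the ideal $\mathfrak{a}=\mathfrak{q}_1$ central, contrary to hypothesis. Since $\mathfrak{a}=\mathfrak{q}_1$ is an ideal, $[\mathfrak{q}_1,\mathfrak{q}]\subseteq\mathfrak{q}_1$; being a nonzero subspace of the one-dimensional space $\mathfrak{q}_1$, it equals $\mathfrak{q}_1$, so $\mathfrak{q}_1\subseteq[\mathfrak{q}_1,\mathfrak{q}]$, and Proposition \ref{threed} yields strong abnormality.

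This argument presents no genuine obstacle; the only step requiring a little care is the two-sided inclusion pinning down $\mathfrak{q}_1=\mathfrak{q}\cap\mathfrak{C}(\mathfrak{q})$ in the central case, where Proposition \ref{cq} is used to pass between the centralizer of $\mathfrak{q}$ and that of $\mathfrak{g}$, together with the elementary containment of the centralizer in the normalizer.
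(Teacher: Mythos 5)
Your proposal is correct and follows essentially the same route as the paper, which simply cites Lemma \ref{base} and Proposition \ref{threed}; you merely fill in the routine verifications (identifying any one-dimensional ideal in $\mathfrak{q}$ with $\mathfrak{q}_1$, and using Proposition \ref{cq} to translate between centrality in $\mathfrak{g}$ and the criteria of Proposition \ref{threed}).
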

		
\begin{proof}
It follows from Proposition \ref{threed} and Lemma \ref{base}.	
\end{proof}
		
We say that a basis $(e_1,e_2,e_3)$	of the subspace $\mathfrak{q}$ is {\it from Lemma \ref{base}}, if $(e_1,e_2,e_3,e_4:=[e_1,e_3])$ is a basis of the Lie algebra $\mathfrak{g}$ and (\ref{fghl}) is satisfied.
		
\begin{proposition}
\label{const}	
For the basis $(e_1,e_2,e_3,e_4)$ of the algebra  $\mathfrak{g}$ from Lemma \ref{base}, we have
$$C_{24}^1=C_{12}^1C_{23}^2-C_{12}^2C_{23}^1,\quad C_{24}^2=0,\quad C_{24}^3=C_{12}^3C_{23}^2-C_{12}^2C_{23}^3,\quad C_{24}^4=C_{23}^3-C_{12}^1.$$	
\end{proposition}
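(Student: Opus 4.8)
The plan is to compute the single unknown bracket $[e_2,e_4]$ directly from the Jacobi identity, exploiting that $e_4=[e_1,e_3]$ by the very definition of the basis from Lemma \ref{base}. Writing $[e_2,e_4]=[e_2,[e_1,e_3]]$ and applying the Jacobi identity in the form $[e_2,[e_1,e_3]]=[[e_2,e_1],e_3]+[e_1,[e_2,e_3]]$, I would reduce the whole problem to expanding these two iterated brackets, each of which should turn out to involve only $[e_1,e_2]$, $[e_2,e_3]$, $[e_1,e_3]=e_4$, and trivial brackets of the form $[e_i,e_i]=0$.

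The key simplification comes from (\ref{fghl}): because $C_{12}^4=C_{23}^4=0$, the brackets $[e_1,e_2]=C_{12}^1e_1+C_{12}^2e_2+C_{12}^3e_3$ and $[e_2,e_3]=C_{23}^1e_1+C_{23}^2e_2+C_{23}^3e_3$ have no $e_4$-component, i.e. they lie in $\mathfrak{q}$. This is precisely what keeps the expansion closed: if an $e_4$-term survived, iterating the bracket would produce $[e_1,e_4]$ or $[e_4,e_3]$, which are not among the known brackets and would obstruct the computation. With $[e_1,e_2]$ and $[e_2,e_3]$ landing in $\mathfrak{q}$, both iterated brackets unfold using only $[e_1,e_3]=e_4$, $[e_1,e_2]$, and $[e_2,e_3]$.

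Concretely, I would substitute $[e_2,e_1]=-C_{12}^1e_1-C_{12}^2e_2-C_{12}^3e_3$ and obtain $[[e_2,e_1],e_3]=-C_{12}^1e_4-C_{12}^2[e_2,e_3]$ via $[e_1,e_3]=e_4$ and $[e_3,e_3]=0$; likewise $[e_1,[e_2,e_3]]=C_{23}^2[e_1,e_2]+C_{23}^3e_4$ via $[e_1,e_1]=0$ and $[e_1,e_3]=e_4$. Adding the two and collecting the coefficients of $e_1,e_2,e_3,e_4$ then yields the four claimed formulas; in particular the $e_2$-coefficient is $-C_{12}^2C_{23}^2+C_{23}^2C_{12}^2=0$, giving $C_{24}^2=0$.

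Since every step is a single application of the Jacobi identity combined with the vanishing relations in (\ref{fghl}), there is no genuine obstacle beyond careful bookkeeping of signs and indices. The only point that must not be overlooked is that it is the condition $C_{12}^4=C_{23}^4=0$, rather than any deeper feature of $\mathfrak{g}$, that forces the iterated brackets to collapse onto already-known data and makes the identities fall out mechanically.
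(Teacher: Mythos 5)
Your proposal is correct and is essentially the paper's own argument: both compute $[e_2,e_4]=[e_2,[e_1,e_3]]$ via the Jacobi identity for $e_1,e_2,e_3$, use $C_{12}^4=C_{23}^4=0$ from (\ref{fghl}) to keep $[e_1,e_2]$ and $[e_2,e_3]$ inside $\mathfrak{q}$, and collect coefficients in the basis $(e_1,e_2,e_3,e_4)$. The only difference is cosmetic — you solve the cyclic identity for $[e_2,[e_1,e_3]]$ up front, whereas the paper carries $[e_2,e_4]$ as an unknown in the cyclic sum and reads off its coefficients at the end.
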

		
\begin{proof}
Owing to (\ref{fghl}), the Jacobi identity	
$$[e_1,[e_2,e_3]]+[e_2,[e_3,e_1]]+[e_3,[e_1,e_2]]=0$$ 
is equivalent to the equality $[e_1,[e_2,e_3]]-[e_2,e_4]+[e_3,[e_1,e_2]]=0$; i.e.	
$$0=C_{23}^2[e_1,e_2]+C_{23}^3e_4-[e_2,e_4]-C_{12}^1e_4-C_{12}^2[e_2,e_3]=\left(C_{23}^3-C_{24}^4-C_{12}^1\right)e_4+$$
$$\left(C_{12}^1C_{23}^2-C_{12}^2C_{23}^1-C_{24}^1\right)e_1-C_{24}^2e_2+\left(C_{12}^3C_{23}^2-C_{12}^2C_{23}^3-C_{24}^3\right)e_3.$$
This yields Proposition \ref{const}. 
\end{proof}	
		
\section{Criteria for the (non)strong abnormality of the extremal}
\label{osn} 
	
It was indicated in	\cite{Ber1} that parametrized by arclength shortest curves of a left-invariant sub-Finsler
metric $d$ on any connected Lie group $G$, defined by a left-invariant bracket generating distribution $D$ and
a norm  $F$ on $D(e)$, coincide with solutions to the time-optimal control problem for the system
\begin{equation}
\label{a3}
\dot{g}(t)=dl_{g(t)}(u(t)),\quad u(t)\in U,
\end{equation}
with a measurable control $u=u(t)$. Here $l_g(h)=gh$, the control domain is the unit ball $U=\{u\in D(e)\,|F(u)\leq 1\},$
while $D$ is bracket generating if and only if the corresponding subspace  $\mathfrak{q}:= D(e)\subset \mathfrak{g}$
satisfies the hypotheses of Lemma \ref{base}. It is clear that every parametrized
by arclength shortest curve $g(t),\,\,0\leq t\leq a,$ satisfies (\ref{a3}) and $F(u(t))=1$ for almost all $t\in [0,a]$.

These statements are true also for the case when $d$ is a quasimetric (respectively, $F$ is a seminorm on $D(e)$ such that $F(u)>0$ for $0\neq u\in D(e)$). 
		
Each segment of the shortest curve in $(G,d)$ is a shortest path, each open ball of sufficiently small positive radius in $(G,d)$ is
diffeomorphic to a region of the Euclidean space and each shortest path, joining any point of the ball with its center, lies in this ball.
Therefore, due to the Pontryagin Maximum Principle (PMP) \cite{PBGM} for the time-optimality of a control $u(t)$ and corresponding trajectory $g(t),$  $t\in [0,a],$ necessary exists a nowhere zero absolutely continuous covector function
$\psi(t)\in T^{\ast}_{g(t)}G$ such that for almost all $t\in [0,a]$ the function $\mathcal{H}(g(t);\psi(t);u)=\psi(t)(dl_{g(t)}(u))$
of $u\in U$ attains a maximum at the point $u(t)$: 
\begin{equation}
\label{m}
M(t)=\psi(t)(dl_{g(t)}(u(t)))=\max\limits_{u\in U}\psi(t)(dl_{g(t)}(u)).
\end{equation}
Moreover, $M(t)\equiv M\geq 0,$  $t\in [a,b].$ 
		
By an {\it extremal} we will mean a parametrized curve $g(t)$ in $G$ with a maximally admissible connected domain
$\Omega\subset\mathbb{R}$ which satisfies the PMP, conditions (\ref{a3}), and $F(u(t))=1$ with a measurable function $u(t)$ almost everywhere on the maximal subset in $\Omega.$ In the case $M=0$ (respectively, $M>0$)
an extremal is called {\it abnormal} (respectively, {\it normal}). In the normal case, proportionally changing
$\psi=\psi(t)$, $t\in\mathbb{R}$, if it is necessary, we can assume that $M=1$.
		
The following proposition is an immediate consequence of Lemma \ref{base}, Corollary 
\ref{act}, and Proposition \ref{threed}. But we shall give here its independent proof because some its details are needed further to establish criteria for strong and nonstrong abnormality of the extremals from Proposition \ref{prop1}.  
		
\begin{proposition}
\label{prop1}
Every four-dimensional connected Lie group $G$ with Lie algebra  $\mathfrak{g}$
and a three-dimensional generating subspace $\mathfrak{q}\subset\mathfrak{g}$ has abnormal extremals 
(for an arbitrary left-invariant quasimetric $d$ on $G$ defined by a seminorm $F$ on $\mathfrak{q}$). 
Each abnormal extremal in $(G,d)$ is one of the two one-parameter subgroups
\begin{equation}
\label{dif}
g(t)=\exp\left(\frac{ste_2}{F(se_2)}\right),\quad t\in \mathbb{R},\,\,s=\pm 1,
\end{equation}
or its left shift on $(G,d)$. 
\end{proposition}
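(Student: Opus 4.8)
The plan is to run the Pontryagin Maximum Principle directly at the Lie-algebra level and read the control off the algebraic data of Lemma~\ref{base}. First I would reduce the covector: along a trajectory of (\ref{a3}) set $h(t):=\psi(t)\circ dl_{g(t)}\in\mathfrak{g}^{\ast}$, so that the function in (\ref{m}) becomes $\mathcal{H}=h(t)(u)$ and its maximum over the control body $U=\{u\in\mathfrak{q}\,:\,F(u)\le 1\}$ equals the support function (dual seminorm) $F^{\ast}$ applied to the restriction $h(t)|_{\mathfrak{q}}$. Since $U$ is a convex body containing $0$ in its interior, $F^{\ast}(\ell)=0$ forces $\ell=0$. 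The left-invariance of the system turns the vertical part of the Hamiltonian system into the Lie--Poisson equation $\dot h(t)=\ad^{\ast}_{u(t)}h(t)$, i.e. $\dot h(t)(v)=h(t)([u(t),v])$ for all $v\in\mathfrak{g}$ (the overall sign convention is immaterial below); this is the only dynamical input I will need.

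In the abnormal case $M\equiv 0$ the maximum in (\ref{m}) vanishes for every $t$, so by the previous paragraph $h(t)|_{\mathfrak{q}}\equiv 0$; hence $h(t)$ lies for all $t$ in the one-dimensional annihilator $\mathfrak{q}^{\perp}=\mathbb{R}\,e^{4}$, where $(e^{1},e^{2},e^{3},e^{4})$ is the basis of $\mathfrak{g}^{\ast}$ dual to the basis $(e_{1},e_{2},e_{3},e_{4})$ from Lemma~\ref{base}. Write $h(t)=c(t)\,e^{4}$ with $c$ absolutely continuous and $c(t)\ne 0$ (as $\psi$ is nowhere zero). Because $h(t)$ stays in the \emph{fixed} line $\mathfrak{q}^{\perp}$, so does $\dot h(t)$; thus $\dot h(t)(v)=0$ for every $v\in\mathfrak{q}$, and the Lie--Poisson equation gives $c(t)\,e^{4}([u(t),v])=0$, i.e. $[u(t),\mathfrak{q}]\subset\mathfrak{q}$. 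Therefore $u(t)\in\mathfrak{q}\cap\mathfrak{N}(\mathfrak{q})=\mathfrak{q}_{1}=\mathbb{R}\,e_{2}$ for almost all $t$. Concretely, writing $u=u_{1}e_{1}+u_{2}e_{2}+u_{3}e_{3}$ and using (\ref{fghl}) one finds $e^{4}([u,e_{1}])=-u_{3}$ and $e^{4}([u,e_{3}])=u_{1}$, so these two conditions pin down $u_{1}=u_{3}=0$.

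It remains to identify the unit-speed controls on the line $\mathfrak{q}_{1}$ and to integrate. On $\mathbb{R}\,e_{2}$ the arclength constraint $F(u(t))=1$ has exactly two solutions, because $F$ is only positively homogeneous in the quasimetric setting: these are $u=s\,e_{2}/F(s\,e_{2})$ for $s=\pm 1$ (the two values coalesce only in the symmetric, sub-Finsler-metric, case, whereas in general $F(e_{2})$ and $F(-e_{2})$ differ). For such a constant control the system (\ref{a3}) integrates to the one-parameter subgroup (\ref{dif}), and the remaining PMP data are consistent: the $e_{4}$-component of the Lie--Poisson equation reads $\dot c=c\,u_{2}\,C_{24}^{4}$ with $C_{24}^{4}=C_{23}^{3}-C_{12}^{1}$ by Proposition~\ref{const}, a linear scalar ODE whose nonzero solutions keep $\psi$ nowhere zero; this establishes existence of the abnormal extremal independently of Corollary~\ref{act}. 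Finally, left-invariance of $d$ carries an extremal issuing from an arbitrary point to one issuing from $e$, which accounts for the left shift, and maximality and connectedness of the domain $\Omega$ together with constancy of the $\mathfrak{q}_{1}$-valued unit-speed control single out precisely the subgroups (\ref{dif}), matching the uniqueness assertion of Proposition~\ref{threed}.

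The main obstacle is the middle step: converting the purely dynamical statement ``$h$ and $\dot h$ lie in $\mathfrak{q}^{\perp}$'' into the algebraic confinement $u(t)\in\mathfrak{q}_{1}$. This is exactly where the normalization (\ref{fghl}) of Lemma~\ref{base} and the Jacobi-derived relations of Proposition~\ref{const} do the work, since without the special basis the $e^{4}$-components of $[u,e_{1}]$ and $[u,e_{3}]$ would not decouple into the clean conditions $u_{3}=0$ and $u_{1}=0$. A secondary point requiring care is the asymmetry of the seminorm $F$: one must check that the unit ``sphere'' of $F$ on the line $\mathfrak{q}_{1}$ consists of exactly the two points $s\,e_{2}/F(s\,e_{2})$, $s=\pm 1$, so that the count of abnormal extremals is correct for a genuine quasimetric rather than a metric.
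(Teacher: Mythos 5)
Your proposal is correct and takes essentially the same route as the paper's proof: both identify $\psi$ with a curve in $\mathfrak{g}^{\ast}$ governed by $(\psi(v))'=\psi([u,v])$, deduce $\psi|_{\mathfrak{q}}\equiv 0$ in the abnormal case, use the normalized basis of Lemma~\ref{base} to force $u_1=u_3=0$ (your invariant phrasing via $u(t)\in\mathfrak{q}\cap\mathfrak{N}(\mathfrak{q})=\mathfrak{q}_1$ is just a repackaging of the paper's explicit equations (\ref{psi1})--(\ref{psi4})), and then solve the linear equation for $\psi_4$ to confirm existence. The only shared informality is treating the sign $s$ as constant in $t$, which the paper does as well.
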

		
\begin{proof}
We can consider the covector function $\psi(t)\in T^{\ast}_{g(t)}G$ from PMP 
as a left-invariant $1$-form on $(G,\cdot)$ and so naturally identify the latter with a covector function
$\psi(t)\in\mathfrak{g}^{\ast}=T^{\ast}_eG$.

In \cite{Ber}, \cite{BerZub} for an extremal $g(t)\in G$, are proved the following relations satisfying for almost all
$t$ in the domain:
\begin{equation}
\label{difur}
\dot{g}(t)=dl_{g(t)}(u(t)),\,\, (\psi(t)(v))'=\psi(t)([u(t),v]),\,\,u(t), v\in \mathfrak{g},\quad F(u(t))=1. 
\end{equation}
			
Omitting for brevity the parameter $t,$ we can write the second equation in (\ref{difur}) as
$\psi'(v)=\psi([u,v]).$ In particular, for $\psi_i:=\psi(e_i),$ $i=1,2,3,4,$ we have
\begin{equation}
\label{ssw}
\psi_i'= \psi([u,e_i]).
\end{equation} 
Set $u=u_1e_1+u_2e_2+u_3e_3\in U$. We get from (\ref{ssw}) and (\ref{fghl}):
\begin{equation}
\label{psi1}
\psi_1'=\psi(-u_2[e_1,e_2]-u_3e_4)=-u_2\sum\limits_{k=1}^3C_{12}^k\psi_k-u_3\psi_4,
\end{equation} 
\begin{equation}
\label{psi2}
\psi_2'=\psi(u_1[e_1,e_2]-u_3[e_2,e_3])=\sum\limits_{k=1}^3\left(u_1C_{12}^k-u_3C_{23}^k\right)\psi_k,
\end{equation}
\begin{equation}
\label{psi3}
\psi_3'=\psi(u_1e_4+u_2[e_2,e_3])=u_1\psi_4+u_2\sum\limits_{k=1}^3C_{23}^k\psi_k,
\end{equation}
\begin{equation}
\label{psi4}
\psi_4'=\psi(u_1[e_1,e_4]+u_2[e_2,e_4]+u_3[e_3,e_4])=\sum\limits_{k=1}^4\left(u_1C_{14}^k+u_2C_{24}^k+u_3C_{34}^k\right)\psi_k.
\end{equation}
Clearly, in abnormal case it must be $\psi_1=\psi_2=\psi_3\equiv 0.$ 
Then (\ref{psi1})---(\ref{psi4}), the condition $\psi_4\neq 0$ and the equation $F(u)=1$ imply that 
\begin{equation}
\label{u13}
u_1=u_3=0,\quad u_2=s/F(se_2),\quad s=\pm 1.
\end{equation}
It follows from (\ref{u13}), Proposition \ref{const} and (\ref{psi4}) that the function
\begin{equation}
\label{ ppsi4}
\psi_4(t)=\varphi_4\exp\left(\frac{C_{24}^4st}{F(se_2)}\right)=\varphi_4\exp\left(\frac{\left(C_{23}^3-C_{12}^1\right)st}{F(se_2)}\right),\,\,s=\pm 1,
\end{equation}
is a solution of equation (\ref{psi4}) with the initial condition $\psi_4(0)=\varphi_4\neq 0$. 
Obviously, it is possible to find $u(t),$ $\psi(t)$ by the above formulas for all $t\in\mathbb{R}.$
			
Now Proposition \ref{prop1} follows from (\ref{u13}) and the first equation in (\ref{difur}).
\end{proof}
		
Below  $F(u_1,u_2,u_3):=F(u),$ $F_U$ is the Minkowski supporting function of the body $U$:
$$F_U(x,y,z)=\max\limits_{(u_1,u_2,u_3)\in U}\left(xu_1+yu_2+zu_3\right).$$		
		
\begin{theorem}
\label{main1}
Abnormal extremal (\ref{dif}) of a four-dimensional connected Lie group  $G$ with a Lie algebra $(\mathfrak{g},[\cdot,\cdot])$ and a left-invariant
quasimetric $d$ defined by a seminorm  $F$ on  a three-dimensional subspace $\mathfrak{q}$, generating $\mathfrak{g}$, with a basis $(e_1,e_2,e_3=[e_1,e_2])$ from Lemma \ref{base} is nonstrongly abnormal if and only if one of the conditions is fulfilled:
$$C_{23}^1=C_{23}^2=0\quad \mbox{and for}\quad s=\pm 1,\quad \exists\quad  k(s)\in\mathbb{R}:  F_U(k(s),s,0)=1/F(0,s,0);$$  
$$C_{23}^1\neq 0\quad \mbox{and}\quad F_U(-C_{23}^2s/C_{23}^1,s,0)=1/F(0,s,0),\quad s=\pm 1.$$ 
\end{theorem}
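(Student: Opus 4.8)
The plan is to read nonstrong abnormality as the coexistence, on the \emph{same} one-parameter subgroup (\ref{dif}), of the abnormal covector produced in the proof of Proposition \ref{prop1} and a second, \emph{normal} ($M>0$) covector satisfying the adjoint relations (\ref{psi1})--(\ref{psi4}) together with the maximum condition (\ref{m}). Along (\ref{dif}) the control is the constant $u=(0,u_2,0)$ with $u_2=s/F(0,s,0)$, and by (\ref{psi2}) the coordinate $\psi_2$ is constant; since $M=\psi_2u_2$, normality forces $\psi_2\neq0$, and I would normalize $\psi_2\equiv s$, so that $M=1/F(0,s,0)$. Equation (\ref{psi4}) then determines $\psi_4$ as a ``free rider'' that never enters the Hamiltonian (the controls lie in $\mathfrak{q}$), so the whole question collapses onto the plane $(\psi_1,\psi_3)$ governed by (\ref{psi1}) and (\ref{psi3}).

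Next I would recast the maximum condition convex-geometrically. Because $u^{\ast}=(0,u_2,0)$ has vanishing first and third coordinates, $\langle(\psi_1,s,\psi_3),u^{\ast}\rangle=su_2=1/F(0,s,0)$ for \emph{all} $\psi_1,\psi_3$; hence $F_U(\psi_1,s,\psi_3)\geq 1/F(0,s,0)$, with equality exactly when $u^{\ast}$ is a maximizer, i.e. when $(\psi_1,s,\psi_3)$ lies in the normal cone to $U$ at $u^{\ast}$. Thus the maximum condition holds throughout the domain iff the curve $(\psi_1(t),\psi_3(t))$ stays for all $t$ in the set $K=\{(\psi_1,\psi_3):F_U(\psi_1,s,\psi_3)=1/F(0,s,0)\}$, which is compact and convex because the dual gauge $F_U$ is coercive.

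The technical core is then the adjoint flow. The system (\ref{psi1}),(\ref{psi3}) with $\psi_2\equiv s$ is autonomous and affine, and confinement of $(\psi_1(t),\psi_3(t))$ to the compact set $K$ for all $t$ forces (e.g. by a time-average argument) the covector to a stationary point of this system lying in $K$. The $\psi_3$-equation $\psi_3'=u_2(C_{23}^1\psi_1+C_{23}^2s+C_{23}^3\psi_3)$ produces the stated dichotomy: when $C_{23}^1=C_{23}^2=0$ it is homogeneous in $\psi_3$, so boundedness gives $\psi_3\equiv0$ while $\psi_1$ may rest at an admissible value $k(s)$ sustained by (\ref{psi1}); when $C_{23}^1\neq0$, sustaining $\psi_3\equiv0$ pins $\psi_1\equiv-C_{23}^2s/C_{23}^1$. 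In either case the admissible covector has the form $(k(s),s,0)$, and its membership in $K$ is precisely the equality $F_U(k(s),s,0)=1/F(0,s,0)$, giving the criterion for each $s=\pm1$.

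I expect the main obstacle to be exactly this last, dynamical, step: proving that confinement to $K$ cannot be realized by a genuinely $\psi_3$-varying or non-stationary bounded orbit, so that the admissible normal covector is forced into the one-parameter family $(k(s),s,0)$ with $k(s)$ \emph{sustained} by (\ref{psi1}). Equivalently, one must rule out normal lifts whose covector leaves the line $\{\psi_3=0\}$ and verify that the value $k(s)$ selected by the adjoint dynamics is the one at which the support-function equality is tested; Proposition \ref{const} enters to keep the $\psi_1$- and $\psi_4$-equations consistent with stationarity. The convex-geometric reformulation of the second step is, by contrast, routine once $F_U$ is identified with the gauge of the polar body; the genuine work is the interplay between the linear adjoint dynamics and the single scalar constraint $F_U(\,\cdot\,,s,0)=1/F(0,s,0)$.
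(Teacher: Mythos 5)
Your proposal is correct in substance and follows the same overall skeleton as the paper's proof (reduce to the affine adjoint system in the $(\psi_1,\psi_3)$-plane with $\psi_2$ constant and $\psi_4$ a free rider, then exploit convexity and boundedness of the dual face of $U^{\ast}$ at $(0,u_2,0)$), but it handles the key analytic step differently. The paper integrates the second-order linear ODE $\psi_1''-u_2C_{23}^3\psi_1'+u_2^2C_{23}^1\psi_1+u_2C_{23}^2=0$ explicitly and runs a case analysis on the roots of its characteristic polynomial, discarding unbounded branches and treating separately the surviving oscillatory branch ($C_{23}^1>0$, $C_{23}^3=0$), where it argues that the center of the resulting ellipse must lie in the dual face by convexity. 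Your time-average argument replaces all of this: if $(\psi_1,\psi_3)$ stays in the compact convex set $K$ for all $t$, then $\frac{1}{T}\int_0^T(\psi_1,\psi_3)\,dt\in K$ converges (along a subsequence) to a zero of the affine vector field, which forces $\psi_3^{\ast}=0$ and $C_{23}^1\psi_1^{\ast}+C_{23}^2/u_2=0$; this yields the stated dichotomy at once, and in the excluded case $C_{23}^1=0$, $C_{23}^2\neq 0$ it yields nonexistence of a fixed point and hence strong abnormality, which the paper obtains instead from unboundedness of the explicit solutions. Your route is cleaner and avoids the case analysis.

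One correction to how you frame the ``main obstacle'': you cannot, and need not, rule out genuinely non-stationary bounded orbits confined to $K$. When $C_{23}^1>0$ and $C_{23}^3=0$ the adjoint flow really does admit bounded periodic orbits (ellipses) that may lie entirely in a two-dimensional dual face, and the paper accepts these; what matters is only that the \emph{fixed point} of the affine system lies in $K$, which is exactly what your averaging argument delivers without excluding such orbits. So the tool you name in passing is the whole proof of necessity; the obstacle you anticipate is illusory. For sufficiency you should still say a word: take the constant covector given by the fixed point, solve the resulting linear first-order equation for $\psi_4$, and observe that $\psi_2\neq 0$ makes $\psi$ nowhere zero and $M=\psi_2u_2>0$, as the paper does explicitly.
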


\begin{proof} 
{\sl Necessity.} Assume that abnormal extremal (\ref{dif}) is nonstrongly abnormal. Then there exists a real-analytic covector function 
$\psi(t)$ which is a solution of system (\ref{psi1})\,--\,(\ref{psi4}), and $\psi(t)(u(t))=F_U(\psi_1(t),\psi_2(t),\psi_3(t))=1$
for almost all $t\in\mathbb{R}$. This and (\ref{u13}) imply that
\begin{equation}
\label{fp}
\psi_2(t)=1/u_2,\quad F_U(\psi_1(t),1/u_2,\psi_3(t))=1,	
\end{equation} 
and the points $\left(\psi_1(t),1/u_2,\psi_3(t)\right)$, $t\in\mathbb{R}$, are dual for the point $(0,u_2,0)$. Therefore, the ranges of the functions $\psi_1(t)$, $\psi_3(t)$, $t\in\mathbb{R}$, are segments (degenerating to a point if the function $F$ is differentiable at $(0,u_2,0)$)
because the body $U^{\ast}$ , dual to $U$, is convex and bounded.
	
We get from (\ref{psi1})--(\ref{psi4}), (\ref{u13}), $e_3=[e_1,e_2]$, and Proposition \ref{const} that 
\begin{equation}
\label{system}
\psi_1'=-u_2\psi_3,\,\,\psi_2\equiv\varphi_2,\,\,\psi_3'=u_2\sum\limits_{k=1}^3C_{23}^k\psi_k,\,\, \psi_4'=u_2\left(C_{24}^1\psi_1+\sum\limits_{k=3}^4C_{24}^k\psi_k\right).
\end{equation}
	
The first equality in (\ref{fp}) and (\ref{system}) imply
\begin{equation}
\label{equat}
\psi_1^{\prime\prime}-u_2C_{23}^3\psi_1^{\prime}+u_2^2C_{23}^1\psi_1+u_2C_{23}^2=0.
\end{equation} 
	
Assume that $C_{23}^1=0$. Then general solution to the equation (\ref{equat}) has a form
$$\psi_1(t)=\left\{\begin{array}{lr}
A_2e^{C_{23}^3u_2t}+C_{23}^2t/C_{23}^3+A_1,\quad\text{if } C_{23}^3\neq 0, \\
-\frac{1}{2}C_{23}^2u_2t^2+A_2t+A_1,\quad\text{if }C_{23}^3=0, \\
\end{array}\right.$$
where $A_1,\,A_2$ are arbitrary reals. 
Since $\psi_1(t)$, $t\in\mathbb{R}$, is bounded, this implies that $C_{23}^2=0$ and $\psi_1(t)=A_1$, $A_1\in\mathbb{R}$. Then $\psi_3(t)=0$ on the ground of the first equation in (\ref{system}) and, taking into account (\ref{u13}), (\ref{fp}), there exists a real number $k(s)$ such that $F_U(k(s),s,0)=1/F(0,s,0),$ $s=\pm 1.$ Hence, the supporting plane of the body
$U$ at the intersection point $\left(0,s/F(se_2),0\right)$ by the axis $Ou_2$ is parallel to the axis $Ou_3$.	
	
Now assume  that  $C_{23}^1\neq 0$. Let us set $B=\left(C_{23}^3\right)^2-4C_{23}^1$. Then general solution to the equation (\ref{equat}) has a form
$$\psi_1(t)=\left\{\begin{array}{lr}
A_1e^{\lambda_1t}+A_2e^{\lambda_2t}-C_{23}^2/(C_{23}^1u_2),\,\,
\lambda_{1,2}=u_2\left(C_{23}^3\pm\sqrt{B}\right)/2,\quad\text{if } B>0, \\
(A_1t+A_2)e^{\frac{1}{2}C_{23}^3u_2 t}-C_{23}^2/(C_{23}^1u_2),\quad\text{if }B=0, \\
e^{\frac{1}{2}C_{23}^3u_2t}\left(A_1\cos\frac{u_2\sqrt{-B}t}{2}+A_2\sin\frac{u_2\sqrt{-B}t}{2}\right)-C_{23}^2/(C_{23}^1u_2),\quad\text{if }B<0,
\end{array}\right.$$
where $A_1,\,A_2$ are arbitrary reals. Since the function $\psi_1(t)$, $t\in\mathbb{R}$, is bounded, 
this implies that either $\psi_1(t)=-C_{23}^2/(C_{23}^1u_2)$ or
\begin{equation}
\label{equat0}
\psi_1(t)=A_1\cos\left(u_2\sqrt{C_{23}^1}t\right)+A_2\sin\left(u_2\sqrt{C_{23}^1}t\right)-\frac{C_{23}^2}{C_{23}^1u_2},\,\,\text{if } C_{23}^1>0,\,\,C_{23}^3=0.
\end{equation}
	
Set $\psi_1(t)=-C_{23}^2/(C_{23}^1u_2)$. Then $\psi_3(t)=0$ on the ground of the first equation in (\ref{system}) and, taking into account  (\ref{u13}), the second equality in (\ref{fp}) is written as 
\begin{equation}
\label{eee}
F_U(-C_{23}^2s/C_{23}^1,s,0)=1/F(0,s,0).
\end{equation}
	
Let the function $\psi_1(t)$, $t\in\mathbb{R}$ be given by (\ref{equat0}). 
Notice that its range is a segment $$\left[-\sqrt{A_1^2+A_2^2}-C_{23}^2/(C_{23}^1u_2),\sqrt{A_1^2+A_2^2}-C_{23}^2/(C_{23}^1u_2)\right].$$
The first equation in (\ref{system}) and (\ref{equat0}) imply that
$$\psi_3(t)=A_1\sqrt{C_{23}^1}\sin\left(u_2\sqrt{C_{23}^1}t\right)-A_2\sqrt{C_{23}^1}\cos\left(u_2\sqrt{C_{23}^1}t\right),$$
and the range of the function $\psi_3(t)$, $t\in\mathbb{R}$, is a segment
$$\left[-\sqrt{C_{23}^1\left(A_1^2+A_2^2\right)},\sqrt{C_{23}^1\left(A_1^2+A_2^2\right)}\right].$$
This, the range of the function  $\psi_1(t)$, convexity and boundedness of the body $U^{\ast}$, the second equality in (\ref{fp}) again imply (\ref{eee}).

{\sl Sufficiency.} Assume that $C_{23}^1=C_{23}^2=0$ and $F_U(k(s),s,0)=1/F(0,s,0)$ for some $k(s)\in\mathbb{R}$.  We put
$$\psi_1(t)=k(s)F(0,s,0),\quad \psi_2(t)=sF(0,s,0),\quad\psi_3(t)=0,\quad\psi_4(t)=0.$$
It follows from Proposition \ref{const} and (\ref{u13}) that the functions $\psi_i(t)$, $i=1,\dots,4$, satisfy (\ref{system}) and (\ref{fp}) is valid.  Then abnormal extremal (\ref{dif}) satisfies the PMP with $M(t)\equiv 1$ (see (\ref{m})), and so (\ref{dif}) is nonstrongly abnormal.
	
Let $C_{23}^1\neq 0$ and (\ref{eee}) be true. We put
$$\psi_1(t)=-sC_{23}^2F(0,s,0)/C_{23}^1,\quad \psi_2(t)=sF(0,s,0),\quad \psi_3(t)=0,$$
$\psi_4(t)$ is the particular solution of the first order differential equation
$$\psi_4'-u_2C_{24}^4\psi_4+C_{24}^1C_{23}^2/C_{23}^1=0.$$
Taking into account (\ref{dif}), it is easy to see that $\psi_i(t)$, $i=1,\dots,4$, satisfy (\ref{system}) and (\ref{fp}) is hold. Then abnormal extremal (\ref{dif}) satisfies the PMP with $M(t)\equiv 1$ (see (\ref{m})), and hence (\ref{dif}) is nonstrongly abnormal.
Theorem \ref{main1} is proved.
\end{proof}		

\begin{theorem}
\label{main2}
Abnormal extremal (\ref{dif}) of a $n$-dimensional connected Lie group  $G$, $n\geq 4$, with a left-invariant sub-Riemannian
metric $d$ defined by a scalar product on  a $(n-1)$-dimensional subspace $\mathfrak{q}$ of the Lie algebra $\mathfrak{g}$ of the Lie group $G$, generating $\mathfrak{g}$,  is a geodesic, i.e. its sufficiently small segments are the shortest curves, if and only if the extremal is nonstrongly abnormal.
\end{theorem}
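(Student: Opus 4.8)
\emph{The plan} is to prove both implications of the equivalence, using as the central reduction an explicit description, valid in the sub-Riemannian case, of when the abnormal one-parameter subgroup $g(t)=\exp(tX)$ (with $X\in\mathfrak q_1$, $F(X)=1$, so $[X,\mathfrak q]\subset\mathfrak q$) admits a normal lift. Denote by $\langle\cdot,\cdot\rangle$ the scalar product defining $d$, so that $F(v)=\sqrt{\langle v,v\rangle}$ on $\mathfrak q$, and write the covector of the PMP as $\psi(t)\in\mathfrak g^\ast$. Since the control region $U$ is the unit ball of $\langle\cdot,\cdot\rangle$, an ellipsoid, the maximality condition (\ref{m}) with $M=1$ and constant control $u=X$ forces $\psi(t)|_{\mathfrak q}=X^\flat$, i.e. $\psi(t)(v)=\langle X,v\rangle$ for all $v\in\mathfrak q$ and all $t$. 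Differentiating this identity and using the adjoint equation in (\ref{difur}), namely $\frac{d}{dt}\psi(t)(v)=\psi(t)([X,v])$ together with $[X,v]\in\mathfrak q$, one sees that $\psi(t)|_{\mathfrak q}\equiv X^\flat$ is consistent for all $t$ if and only if $\langle X,[X,v]\rangle=0$ for every $v\in\mathfrak q$. Thus, in the sub-Riemannian case, the extremal is nonstrongly abnormal precisely when $X\perp[X,\mathfrak q]$, and strongly abnormal precisely when $\langle X,[X,Y]\rangle\neq0$ for some $Y\in\mathfrak q$.

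For the implication \emph{nonstrongly abnormal} $\Rightarrow$ \emph{geodesic}, I would argue that a nonstrongly abnormal extremal is, by definition, simultaneously a normal extremal, hence the projection of a trajectory of the normal Hamiltonian flow of $d$. It is a classical fact for left-invariant sub-Riemannian metrics that every normal extremal is locally length-minimizing: its sufficiently small subarcs are the unique shortest curves joining their endpoints. This is exactly the place where the hypothesis that $d$ is defined by a scalar product, rather than by a general seminorm, is essential. Consequently $g(t)$ is a geodesic.

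For the converse I would prove the contrapositive: if $\langle X,[X,Y]\rangle\neq0$ for some $Y\in\mathfrak q$, then $g(t)$ is not locally minimizing. The plan is a second-variation argument. For small $T>0$ consider horizontal competitors with controls $u_\varepsilon(t)=X+\varepsilon b(t)+\varepsilon^2 c(t)$, where $b(t),c(t)\in\mathfrak q$, $b\perp X$, chosen so that the corresponding trajectories still join $e$ to $\exp(TX)$. Expanding both the length $\int_0^T F(u_\varepsilon)\,dt$ and the endpoint map to second order in $\varepsilon$, the first-order contributions vanish and the length excess equals $\tfrac{\varepsilon^2}{2}\int_0^T\big(|b|^2+2\langle X,c\rangle\big)\,dt+O(\varepsilon^3)$, while the $\mathfrak q$-component of the order-$\varepsilon^2$ endpoint constraint fixes $\int_0^T\langle X,c\rangle\,dt$ as a quadratic functional of $b$ built from the iterated action of $\ad_X$ on $\mathfrak q$. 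The point is that $\langle X,[X,\cdot]\rangle\neq0$ renders this functional indefinite, so $b$ can be chosen making the length excess strictly negative, uniformly in small $T$; the resulting horizontal curve is shorter than $g|_{[0,T]}$, so $g(t)$ is not a geodesic.

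The main obstacle is precisely this last step: performing the second-order expansion of the endpoint map, isolating the $\langle X,\cdot\rangle$-part of the order-$\varepsilon^2$ constraint, and showing that $\langle X,[X,Y]\rangle\neq0$ forces the resulting index form to take negative values for arbitrarily short $T$. A useful guiding picture, which already disposes of the case $X\in[X,\mathfrak q]$ (in particular every four-dimensional case, by Proposition \ref{threed}), is the following: then some $Y\in\mathfrak q$ satisfies $[X,Y]=X$, so $X$ and $Y$ span a two-dimensional subalgebra isomorphic to the Lie algebra of the affine group of the line; as this subalgebra lies in $\mathfrak q$, the corresponding subgroup is an integral surface of $D$, its induced left-invariant metric is hyperbolic, and $g(t)=\exp(tX)$ is a horocycle, which is nowhere locally minimizing. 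The real difficulty is that in the general sub-Riemannian setting strong abnormality is metric-dependent and need not furnish such a subalgebra, so the index-form computation cannot be bypassed.
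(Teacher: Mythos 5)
Your sufficiency half and your opening reduction are sound and agree with the paper: for a Euclidean ball the maximality condition does force $\psi(t)|_{\mathfrak{q}}=\langle X,\cdot\rangle$, and the adjoint equation then shows that a normal lift exists exactly when $\langle X,[X,v]\rangle=0$ for all $v\in\mathfrak{q}$; this is the same criterion the paper obtains (its unnumbered sub-Riemannian theorem, $\mathfrak{s}=[e_2,\mathfrak{q}]\subset\langle\tilde e_1,\tilde e_3\rangle$), and the implication ``normal $\Rightarrow$ locally minimizing'' is exactly what the paper takes from Appendix~C of Liu--Sussmann \cite{LS}.

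The necessity direction, however, is left with a genuine gap that you yourself flag: the second-order expansion of the endpoint map and the proof that $\langle X,[X,Y]\rangle\neq 0$ makes the index form indefinite for arbitrarily short $T$ is precisely the hard step, and you do not carry it out. What you are sketching is, in substance, a proof of the Goh condition, and the intended argument is to \emph{cite} it rather than reprove it: by \cite{Sach} (Sec.~20.5), a strictly abnormal local minimizer admits an abnormal covector $\psi$ annihilating not only $\mathfrak{q}$ but also $[\mathfrak{q},\mathfrak{q}]$ along the trajectory; since $\mathfrak{q}$ is a generating hyperplane, $\mathfrak{q}+[\mathfrak{q},\mathfrak{q}]=\mathfrak{g}$, so $\psi\equiv 0$, a contradiction --- hence a strongly abnormal extremal is not a geodesic. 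Note also that the Goh theorem requires strict (= strong) abnormality, which is exactly your hypothesis, so no index-form computation is needed at all. Separately, your parenthetical claim that the condition $X\in[X,\mathfrak{q}]$ covers ``every four-dimensional case, by Proposition~\ref{threed}'' is wrong: that proposition characterizes strong abnormality \emph{for all} quasimetrics by $\mathfrak{q}_1\subset[\mathfrak{q}_1,\mathfrak{q}]$, whereas Theorem~\ref{main2} concerns strong abnormality with respect to one fixed scalar product, which by your own criterion only requires $X\not\perp[X,\mathfrak{q}]$ and does not imply $X\in[X,\mathfrak{q}]$ (e.g.\ $[X,\mathfrak{q}]$ may be a line not containing $X$ but not orthogonal to it). So the horocycle picture handles only a special case, and without the Goh condition (or a completed second-variation argument) the proof of necessity is not done.
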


\begin{proof}
Sufficiency is a consequence of the statement that every normal sub-Riemannian extremal is a geodesic, which was proved in Appendix C in the memoir by W.~Liu and H.~Sussmann \cite{LS}. Necessity: a strongly abnormal sub-Riemannian extremal is not a geodesic due to the equality 
$\mathfrak{q}+[\mathfrak{q},\mathfrak{q}]=\mathfrak{q}$ and the Goch condition for abnormal geodesics (\cite{Sach}, p.~20.5.1).
\end{proof}

\section{Algebraic results using the classification}

Further, usually without mention, we use Table \ref{Tab:list}.

We get from Proposition \ref{gen1} 

\begin{corollary}
\label{nonex}
All four-dimensional real Lie algebras but $4\mathfrak{g}_{1}$ and $\mathfrak{g}^{1,1}_{4,5}$, have three-dimensional generating subspaces. 
\end{corollary}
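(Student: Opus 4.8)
The plan is to specialize Proposition \ref{gen1} to $n=4$ and then match the two exceptional possibilities it produces against the classification recorded in Table \ref{Tab:list}. By that proposition, a four-dimensional real Lie algebra $\mathfrak{g}$ fails to possess a three-dimensional generating subspace precisely when either (i) $\mathfrak{g}$ is commutative, or (ii) $\mathfrak{g}$ contains a commutative three-dimensional ideal $I$ together with an element $z$ for which $\ad z$ restricts to the identity on $I$. The task therefore reduces to showing that case (i) yields exactly $4\mathfrak{g}_{1}$ and case (ii) yields exactly $\mathfrak{g}^{1,1}_{4,5}$, up to isomorphism; the classification table is then invoked only to attach these names.

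Case (i) is immediate, since the abelian four-dimensional Lie algebra is unique up to isomorphism and is denoted $4\mathfrak{g}_{1}$ in the table. For case (ii), I would first observe that $z\notin I$: indeed $\ad z|_{I}=\mathrm{id}\neq 0$, whereas $\ad w|_{I}=0$ for every $w\in I$ because $I$ is commutative. Hence $\mathfrak{g}=I\oplus\mathbb{R}z$ as a vector space. Choosing any basis $e_{1},e_{2},e_{3}$ of $I$ and setting $e_{4}=z$, commutativity of $I$ gives $[e_{i},e_{j}]=0$ for $i,j\in\{1,2,3\}$, while the identity action gives $[e_{4},e_{i}]=e_{i}$ for $i=1,2,3$. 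These relations fix every structure constant, so the bracket is completely rigid; moreover the Jacobi identity holds automatically, as any linear map is a derivation of an abelian Lie algebra. Comparing with Table \ref{Tab:list}, these are precisely the defining relations of $\mathfrak{g}^{1,1}_{4,5}$ (the member $a=b=1$ of the family $\mathfrak{g}^{a,b}_{4,5}$).

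The point that needs care is the \emph{uniqueness} in case (ii): one must verify that the abstract data---an abelian three-dimensional ideal together with an outer generator acting as the identity---leave no residual freedom, so that $\mathfrak{g}$ is forced to be $\mathfrak{g}^{1,1}_{4,5}$ and is not confused with some other entry of the table possessing a three-dimensional abelian ideal. This is exactly where the splitting $\mathfrak{g}=I\oplus\mathbb{R}z$ and the vanishing of $\ad w|_{I}$ for $w\in I$ do the work: they show that the extension is determined entirely by the prescribed identity action, so no parameters remain and the algebra is pinned down. With both cases thus identified, Corollary \ref{nonex} follows.
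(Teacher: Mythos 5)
Your proposal is correct and follows the same route as the paper, which simply derives the corollary from Proposition \ref{gen1} together with the classification in Table \ref{Tab:list}; you merely spell out the routine identification of the two exceptional cases (the abelian algebra $4\mathfrak{g}_{1}$ and the algebra $I\oplus\mathbb{R}z$ with $\ad z|_{I}=\mathrm{id}$, which is $\mathfrak{g}^{1,1}_{4,5}$ up to replacing $z$ by $-E_4$). No gaps.
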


Further, $\langle\cdot\rangle$ denotes the linear span of the vectors indicated in the parentheses.

\begin{table}[h]
\centering
\begin{tabular}{|c|c|c|}
\hline
\centering{Type of a Lie algebra}&$k$&Nonzero commutators\\
\hline
\centering{$4\mathfrak{g}_{1}$}&$0$&$-$\\
\hline
\centering{$\mathfrak{g}_{2,1}\oplus 2\mathfrak{g}_1$}&$1$&$[E_1,E_2] = E_1$\\
\hline
\centering{$2\mathfrak{g}_{2,1}$}&$2$&$[E_1,E_2] = E_1,\quad [E_3,E_4]=E_3$\\
\hline
\centering{$\mathfrak{g}_{3,1}\oplus \mathfrak{g}_1$}&$1$&$[E_2,E_3] = E_1$\\
\hline
\centering{$\mathfrak{g}_{3,2}\oplus\mathfrak{g}_1$}&$3$&$ [E_2,E_3] = E_1-E_2,\quad [E_3,E_1] = E_1$\\
\hline
\centering{$\mathfrak{g}_{3,3}\oplus\mathfrak{g}_1$}&$1$&$[E_2,E_3] = -E_2,\quad [E_3,E_1] = E_1$\\
\hline
\centering{$\mathfrak{g}^{\alpha}_{3,4}\oplus\mathfrak{g}_1$,\quad\tiny{$0\leq\alpha\neq 1$}}&$3,\,4$&$[E_2,E_3]=E_1 -\alpha E_2,\quad [E_3,E_1] = \alpha E_1- E_2$\\
\hline
\centering{$\mathfrak{g}^{\alpha}_{3,5}\oplus\mathfrak{g}_1$,\quad\tiny{$\alpha\geq 0$}}&$2$&$[E_2,E_3] = E_1-\alpha E_2,\quad [E_3,E_1]=\alpha E_1+E_2$\\
\hline
\centering{$\mathfrak{g}_{3,6}\oplus\mathfrak{g}_1$}&$5$&$[E_2,E_3] = E_1,\quad [E_3,E_1] = E_2,\quad [E_1,E_2] = -E_3$\\
\hline
\centering{$\mathfrak{g}_{3,7}\oplus\mathfrak{g}_1$}&$2$&$[E_2,E_3] = E_1,\quad [E_3,E_1] = E_2,\quad [E_1,E_2] = E_3$\\	
\hline
\centering{$\mathfrak{g}_{4,1}$}&$2$&$[E_2,E_4] = E_1,\quad [E_3,E_4] = E_2$\\
\hline
\centering{$\mathfrak{g}^{\alpha}_{4,2}$,\quad\tiny{$\alpha\neq 0$}}&$1,\,3$&$[E_1,E_4] =\alpha E_1,\,\,[E_2,E_4] = E_2,\,\, [E_3,E_4]=E_2+E_3$\\
\hline
\centering{$\mathfrak{g}_{4,3}$}&$3$&$[E_1,E_4] = E_1,\quad [E_3,E_4] = E_2$\\
\hline
\centering{$\mathfrak{g}_{4,4}$}&$2$&$[E_1,E_4] = E_1,\quad [E_2,E_4] = E_1+ E_2,$\\
\centering{$ $}&$ $&$[E_3,E_4]=E_2+E_3$\\
\hline
\centering{$\mathfrak{g}^{\alpha,\beta}_{4,5}$}&$0,1,4$&$[E_1,E_4] = E_1,\quad [E_2,E_4] = \beta E_2,\quad [E_3,E_4]=\alpha E_3$\\
\centering{\tiny{$-1<\alpha\leq \beta\leq 1,\,\,\alpha\beta\neq 0$}}&$ $&$ $\\
\centering{\tiny{or $\alpha=-1,\,\,0<\beta\leq 1$}}&$ $&$ $\\
\hline
\centering{$\mathfrak{g}^{\alpha,\beta}_{4,6}$,\quad\tiny{$\alpha>0,\,\,\beta\in\mathbb{R}$}}&$2$&$[E_1,E_4] = \alpha E_1,\quad [E_2,E_4] = \beta E_2-E_3,$\\ \centering{$ $}&$ $&$[E_3,E_4]=E_2+\beta E_3$\\
\hline
\centering{$\mathfrak{g}_{4,7}$}&$2$&$[E_1,E_4] = 2E_1,\quad [E_2,E_4] = E_2,$\\
\centering{$ $}&$ $&$[E_3,E_4]=E_2+E_3,\quad [E_2,E_3]=E_1$\\
\hline
\centering{$\mathfrak{g}^{-1}_{4,8}$}&$2$&$[E_2,E_3] = E_1,\quad [E_2,E_4] = E_2,\quad [E_3,E_4]=-E_3$\\
\hline
\centering{$\mathfrak{g}^{\alpha}_{4,8}$,\quad\tiny{$-1<\alpha\leq 1$}}&$1,\,2$&$[E_1,E_4] =(1+\alpha)E_1,\quad [E_2,E_4]=E_2,$\\
\centering{$ $}&$ $&$[E_3,E_4]=\alpha E_3,\quad [E_2,E_3]=E_1$\\
\hline
\centering{$\mathfrak{g}^{\alpha}_{4,9}$,\quad\tiny{$\alpha\geq 0$}}&$2$&$[E_1,E_4] =2\alpha E_1,\quad [E_2,E_4]=\alpha E_2-E_3,$\\
\centering{$ $}&$ $&$[E_3,E_4]=E_2+\alpha E_3,\quad [E_2,E_3]=E_1$\\
\hline
\centering{$\mathfrak{g}_{4,10}$}&$1$&$[E_1,E_3] =E_1,\quad [E_2,E_3]=E_2,$\\
\centering{$ $}&$ $&$[E_1,E_4]=-E_2,\quad [E_2,E_4]=E_1$\\
\hline
\end{tabular}
\Table\label{Tab:list} Four-dimensional real Lie algebras $\mathfrak{g}$, $k$ is a number of equivalence classes of three-dimensional subspaces generating the Lie algebra $\mathfrak{g}$
\end{table}

\begin{proposition}
\label{nonstr}
Let $\mathfrak{q}$ be a three-dimensional generating subspace of a four-dimensional Lie algebra $\mathfrak{g}$ such that $\{0\}\neq[\mathfrak{N}(\mathfrak{q}),\mathfrak{q}]\not\supset\mathfrak{N}
(\mathfrak{q}).$ Then $\mathfrak{g}=\mathfrak{g}^1_{4,8}$ or there exists a basis $(e_1,e_2,e_3:=[e_1,e_2])$ in $\mathfrak{q}$
as in Lemma \ref{base}.
\end{proposition}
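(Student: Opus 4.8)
The plan is to turn the statement into a single linear-algebra dichotomy for one operator and then match the exceptional alternative with $\mathfrak{g}^1_{4,8}$. First I would use Lemma~\ref{base} to fix a nonzero $e_2$ spanning the one-dimensional space $\mathfrak{q}_1=\mathfrak{q}\cap\mathfrak{N}(\mathfrak{q})$. Since $e_2\in\mathfrak{N}(\mathfrak{q})$, the operator $\ad e_2$ maps $\mathfrak{q}$ into $\mathfrak{q}$ and kills $e_2$, so it descends to an operator $\overline{T}$ on the two-dimensional quotient $V:=\mathfrak{q}/\langle e_2\rangle$. The crucial remark is that a basis $(e_1,e_2,e_3:=[e_1,e_2])$ of the type required in Lemma~\ref{base} exists if and only if some $e_1\in\mathfrak{q}$ makes $e_1,e_2,[e_1,e_2]$ linearly independent; as $[e_1,e_2]=-(\ad e_2)(e_1)$, this is equivalent to finding $\bar e_1\in V$ with $\bar e_1,\overline{T}\bar e_1$ independent, i.e. to $\overline{T}$ \emph{not} being a scalar multiple of $\mathrm{id}_V$. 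Indeed, given such an $e_1$, the plane $\mathfrak{p}=\langle e_1,[e_1,e_2]\rangle$ meets $\mathfrak{q}_1$ trivially, so Lemma~\ref{base} already guarantees $e_4:=[e_1,e_3]\notin\mathfrak{q}$ and that $(e_1,e_2,e_3,e_4)$ is a basis satisfying \eqref{fghl}. Hence everything reduces to the case $\overline{T}=\lambda\,\mathrm{id}_V$.

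The case $\lambda\neq 0$ should lead exactly to $\mathfrak{g}^1_{4,8}$. Because $\lambda\neq 0$, every lift can be corrected to a genuine eigenvector: if $(\ad e_2)v=\lambda v+c\,e_2$, then $v+\lambda^{-1}c\,e_2$ is a true $\lambda$-eigenvector, so $\ad e_2$ is diagonalizable with spectrum $0,\lambda,\lambda$. I would pick $e_1,e_3$ with $[e_2,e_1]=\lambda e_1$, $[e_2,e_3]=\lambda e_3$ and set $e_4=[e_1,e_3]$, which lies outside $\mathfrak{q}$ by Lemma~\ref{base}. The Jacobi identity (or Proposition~\ref{const}) gives $[e_2,e_4]=2\lambda e_4$, so $\ad e_2$ grades $\mathfrak{g}$ with weights $0,\lambda,\lambda,2\lambda$ on $e_2,e_1,e_3,e_4$. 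Since $\lambda\neq 0$, the brackets $[e_1,e_4]$ and $[e_3,e_4]$ carry weight $3\lambda$, which is not a weight of $\mathfrak{g}$, so both vanish. The surviving relations, after rescaling $e_2\mapsto-\lambda^{-1}e_2$ and relabelling $E_1=e_4,\ E_2=e_1,\ E_3=e_3,\ E_4=-\lambda^{-1}e_2$, are precisely the table entries of $\mathfrak{g}^1_{4,8}$, giving $\mathfrak{g}=\mathfrak{g}^1_{4,8}$.

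It remains to eliminate $\lambda=0$, i.e. $\overline{T}=0$, and this is where I expect the hypothesis to be used and where the main difficulty lies. Here $(\ad e_2)\mathfrak{q}\subseteq\langle e_2\rangle$, so $[\mathfrak{q}_1,\mathfrak{q}]\subseteq\langle e_2\rangle$. When $\mathfrak{N}(\mathfrak{q})=\mathfrak{q}_1$ the argument is clean: then $[\mathfrak{N}(\mathfrak{q}),\mathfrak{q}]=[e_2,\mathfrak{q}]\subseteq\langle e_2\rangle$, so $[\mathfrak{N}(\mathfrak{q}),\mathfrak{q}]\neq\{0\}$ forces $[\mathfrak{N}(\mathfrak{q}),\mathfrak{q}]=\langle e_2\rangle$, whence $e_2\in[\mathfrak{N}(\mathfrak{q}),\mathfrak{q}]$, contradicting $[\mathfrak{N}(\mathfrak{q}),\mathfrak{q}]\not\supseteq\mathfrak{N}(\mathfrak{q})$; thus $\overline{T}=0$ cannot occur and the required basis exists. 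The hard part will be the subcase $\dim\mathfrak{N}(\mathfrak{q})=2$, say $\mathfrak{N}(\mathfrak{q})=\langle e_2,z\rangle$ with $z\notin\mathfrak{q}$: then $[\mathfrak{N}(\mathfrak{q}),\mathfrak{q}]\subseteq\mathfrak{q}$ cannot contain $z$, so the non-containment $[\mathfrak{N}(\mathfrak{q}),\mathfrak{q}]\not\supseteq\mathfrak{N}(\mathfrak{q})$ is automatic and no longer rules out $\overline{T}=0$ by itself. To close this case I would combine Proposition~\ref{const}, Proposition~\ref{cq}, and the Jacobi identities relating $\ad e_2$, $\ad z$ and $e_4=[e_1,e_3]$ to show that $\overline{T}=0$ together with $\dim\mathfrak{N}(\mathfrak{q})=2$ forces $\mathfrak{q}_1$ to centralize $\mathfrak{q}$ (so that $[\mathfrak{q}_1,\mathfrak{q}]=\{0\}$), which is exactly what the strengthened reading of the hypothesis $[\mathfrak{N}(\mathfrak{q}),\mathfrak{q}]\neq\{0\}$ forbids, thereby again excluding $\overline{T}=0$.
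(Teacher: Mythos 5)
Your first two cases are correct, and the organizing idea is genuinely different from (and tidier than) the paper's. The paper fixes $e_2$ spanning $\mathfrak{q}_1$, sets $\mathfrak{s}=[e_2,\mathfrak{q}]$, and runs through five subcases according to $\dim\mathfrak{s}$ and the eigenvalue structure of $\ad e_2$ on $\mathfrak{s}$; your single dichotomy ``$\overline{T}$ scalar or not'' on $\mathfrak{q}/\langle e_2\rangle$ absorbs all of the paper's cases 1), 2a), 2b) and the Jordan-block part of 2c) at once, and your weight argument for $\overline{T}=\lambda\,\mathrm{id}$, $\lambda\neq 0$, reproduces the paper's identification with $\mathfrak{g}^1_{4,8}$ exactly. (Your version even covers the subcase of $\dim\mathfrak{s}=1$ in which $\ad e_2|_{\mathfrak{q}}$ is nilpotent, which the paper's case 1) skips over when it asserts a nonzero real eigenvalue.)

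The gap is the final subcase $\overline{T}=0$, $\dim\mathfrak{N}(\mathfrak{q})=2$, and it cannot be closed along the lines you sketch. The auxiliary claim you propose to prove there is false: in $\mathfrak{g}_{4,3}$ (with $[E_1,E_4]=E_1$, $[E_3,E_4]=E_2$) take $\mathfrak{q}=\langle E_1,E_3,E_4\rangle$; then $\mathfrak{q}$ generates, $\mathfrak{N}(\mathfrak{q})=\langle E_1,E_2\rangle$ is two-dimensional, $\mathfrak{q}_1=\langle E_1\rangle$, and $[E_1,\mathfrak{q}]=\langle E_1\rangle$, so $\overline{T}=0$ yet $[\mathfrak{q}_1,\mathfrak{q}]\neq\{0\}$. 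Worse, this same example (and likewise $\mathfrak{g}_{4,1}$ with $\mathfrak{q}=\langle E_1,E_3,E_4\rangle$, where $\mathfrak{q}_1$ is central) satisfies the hypothesis as literally written --- $\{0\}\neq[\mathfrak{N}(\mathfrak{q}),\mathfrak{q}]=\langle E_1\rangle\not\supset\mathfrak{N}(\mathfrak{q})$ --- while admitting no basis with $e_3=[e_1,e_2]$ and not being $\mathfrak{g}^1_{4,8}$; so under the literal reading the proposition is simply false and no argument can rescue your last subcase. The paper avoids this because its proof silently replaces $\mathfrak{N}(\mathfrak{q})$ by $\mathfrak{q}_1$: it opens with ``Assume that $\mathfrak{N}(\mathfrak{q})=\langle e_2\rangle$'' and uses only that $\mathfrak{s}=[e_2,\mathfrak{q}]\neq\{0\}$ and $e_2\notin\mathfrak{s}$. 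Under that intended reading your $\overline{T}=0$ case dies in the one line you already wrote for the case $\mathfrak{N}(\mathfrak{q})=\mathfrak{q}_1$ (namely $[e_2,\mathfrak{q}]\subseteq\langle e_2\rangle$ together with $[\mathfrak{q}_1,\mathfrak{q}]\neq\{0\}$ forces $e_2\in[\mathfrak{q}_1,\mathfrak{q}]$, contradicting the non-containment), and your proof is then complete; the elaborate Jacobi-identity analysis you anticipate for the two-dimensional normalizer is unnecessary. You should therefore state explicitly that you are reading the hypothesis as $\{0\}\neq[\mathfrak{q}_1,\mathfrak{q}]\not\supset\mathfrak{q}_1$ and drop the last paragraph.
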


\begin{proof}
Assume that $\mathfrak{N}(\mathfrak{q})=\langle e_2 \rangle$ and  $\mathfrak{s}:=[e_2,\mathfrak{q}]$. Then $\mathfrak{s}\cap 
\langle e_2 \rangle=\{0\}$ and either
1) $\dim(\mathfrak{s})=1$ or 2) $\dim(\mathfrak{s})=2$.

1) It's clear that the operator $\rm{ad}(e_2): \mathfrak{q}\rightarrow \mathfrak{q}$ has the eigenvalue $0$ of multiplicity 2 and the real eigenvalue $\alpha\neq 0$ of multiplicity 1. There exist corresponding eigenvectors
$e,f\in \mathfrak{q}$ such that 
$e\nparallel e_2$, $f\nparallel e_2$, $[e_2,e]=0,$ $[e_2,f]=\alpha f$. Setting $e_1=-(e+f),$ $e_3=\alpha f,$ we see that
$e_3=[e_1,e_2]$, $(e_1,e_2,e_3)$ is a basis in $\mathfrak{q}$ from Lemma \ref{base}.

2) The operator 
$\rm{ad}(e_2): \mathfrak{s}\rightarrow \mathfrak{s}$ has 
two (possibly coinciding) nonzero eigenvalues.
The following cases are possible: a)  the eigenvalues are conjugate
and purely imaginary; b) the eigenvalues are real and equal to $\alpha\neq\beta$;
c) the eigenvalues are real and equal to $\alpha=\beta$.

a) Let $e_1$ be an arbitrary nonzero vector from $\mathfrak{s}.$
Then the vectors $e_1,$ $e_2,$ $e_3:=[e_1,e_2]$ constitute a basis in $\mathfrak{q}$
from Lemma 1.

b) Let $e$ and $f$ be nonzero eigenvectors in $\mathfrak{s}$ with eigenvalues
$\alpha$ and $\beta$ respectively. Setting
$e_1=-(e+f),$ we get that vectors  $e_1,$ $e_2,$ $e_3:=[e_1,e_2]=\alpha e+\beta f$ constitute a basis in $\mathfrak{q}$ from Lemma \ref{base}.

c) Multiplying, if it is necessary, the vector $e_2$ by $1/\alpha$, we can assume that
$\alpha=1.$ Two subcases are possible here: there exists a basis $(e,f)$ in $\mathfrak{s}$ such that $[e_2,e]=e+f,$ $[e_2,f]=f$; 
or $[e_2,e]=e$ for any $e\in\mathfrak{s}.$ In the first subcase, we set $e_1=f-e,$ $e_3=e$ and get a basis  $(e_1,e_2,e_3=[e_1,e_2])$
from Lemma \ref{base}. In the second subcase, let $e_1,$ $e_3$ be an arbitrary linearly independent vectors from $\mathfrak{s}.$
Then $[e_2,e_1]=e_1,$ $[e_2,e_3]=e_3,$ $e_4:=[e_1,e_3]\not\in\mathfrak{q}$ and $(e_1,e_2,e_3,e_4)$ is a basis in
$\mathfrak{g}$ from Lemma \ref{base}. Further, by virtue of the Jacobi identity for vectors $e_1,e_2,e_3$, we have
$$[e_2,e_4]=[e_2,[e_1,e_3]]=[[e_2,e_1],e_3]+[e_1,[e_2,e_3]]=[e_1,e_3] + [e_1,e_3]=2e_4.$$
Thus, $e_1,e_2,e_3,e_4$ are eigenvectors of the operator $\rm{ad}(e_2):\mathfrak{g}\rightarrow \mathfrak{g}$
with eigenvalues $1$, $0$, $1$, $2$ respectively. Moreover, by the Jacobi identity for vectors $e_1,e_2,e_4$, we have
$$[e_2,[e_1,e_4]]=[[e_2,e_1],e_4]+[e_1,[e_2,e_4]]=[e_1,e_4]+2[e_1,e_4]=3[e_1,e_4]$$
and similarly $[e_2,[e_3,e_4]]=3[e_3,e_4].$ Due to what was said earlier, the number $3$ is not an eigenvalue of the operator
$\rm{ad}(e_2):\mathfrak{g}\rightarrow \mathfrak{g}$. Therefore $[e_1,e_4]=[e_3,e_4]=0.$
Setting $e_1=E_2$, $e_2=-E_4,$ $e_3=E_3,$ $e_4=E_1,$ we see from Table \ref{Tab:list}
that $\mathfrak{g}=\mathfrak{g}^1_{4,8}.$ 
\end{proof}

\begin{remark}
\label{rem1}
All four-dimensional real Lie algebras $\mathfrak{g}$ but $4\mathfrak{g}_1$, 
$\mathfrak{g}_{2,1}\oplus 2\mathfrak{g}_{1}$, $2\mathfrak{g}_{2,1}$, $\mathfrak{g}_{3,1}\oplus \mathfrak{g}_{1}$, $\mathfrak{g}_{3,3}\oplus \mathfrak{g}_{1}$, $\mathfrak{g}_{4,2}^1$, $\mathfrak{g}_{4,5}^{\alpha,1}$, $-1\leq\alpha\leq 1$, $\alpha\neq 0$, $\mathfrak{g}_{4,5}^{\alpha,\alpha}$, $-1<\alpha<1$, $\alpha\neq 0$, $\mathfrak{g}_{4,8}^1$, have two-dimensional generating subspaces \cite{BerZub21}. 
\end{remark}

\begin{proposition}
\label{ideal}
Let  $\mathfrak{g}$ be a four-dimensional real Lie algebra, for which there exists a generating two-dimensional subspace. A three-dimensional subspace $\mathfrak{q}$ generates the Lie algebra $\mathfrak{g}$ and contains no one-dimensional ideal of this algebra if and only if
there exists a basis $(e_1,e_2,e_3:=[e_1,e_2])$ in $\mathfrak{q}$ satisfying Lemma \ref{base}.
\end{proposition}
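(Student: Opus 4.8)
The plan is to work throughout with the distinguished line $\mathfrak{q}_1=\mathfrak{q}\cap\mathfrak{N}(\mathfrak{q})=\langle e_2\rangle$ supplied by Lemma~\ref{base}. First I would record the reduction that controls both implications: if $\langle v\rangle\subseteq\mathfrak{q}$ is a one-dimensional ideal of $\mathfrak{g}$, then $[v,\mathfrak{q}]\subseteq\langle v\rangle\subseteq\mathfrak{q}$ forces $v\in\mathfrak{q}\cap\mathfrak{N}(\mathfrak{q})=\mathfrak{q}_1$, so $\langle e_2\rangle$ is the only possible one-dimensional ideal contained in $\mathfrak{q}$. Hence ``$\mathfrak{q}$ contains no one-dimensional ideal'' is equivalent to ``$\langle e_2\rangle$ is not an ideal of $\mathfrak{g}$''. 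The implication from the basis to the algebraic condition is then immediate: for a basis $(e_1,e_2,e_3:=[e_1,e_2])$ from Lemma~\ref{base} the vector $e_3=[e_1,e_2]$ is linearly independent of $e_2$, so $[e_2,e_1]\notin\langle e_2\rangle$ and $\langle e_2\rangle$ is not an ideal; moreover $e_4=[e_1,e_3]\notin\mathfrak{q}$, whence $\mathfrak{q}$ generates $\mathfrak{g}$ by Proposition~\ref{gen}.

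For the converse I would reduce the construction to a single linear-algebraic condition on the operator $A:=\ad(e_2)|_{\mathfrak{q}}\colon\mathfrak{q}\to\mathfrak{q}$, which is well defined because $e_2\in\mathfrak{N}(\mathfrak{q})$ and satisfies $A(e_2)=0$. Since $[e_1,e_2]=-A(e_1)$, the triple $(e_1,e_2,[e_1,e_2])$ is a basis of $\mathfrak{q}$ exactly when $A(e_1)\notin\langle e_1,e_2\rangle$, i.e. when the induced operator $\bar A$ on the plane $\mathfrak{q}/\langle e_2\rangle$ possesses a cyclic vector, equivalently when $\bar A$ is not scalar. Once such an $e_1$ is found, the remaining hypothesis $e_4:=[e_1,e_3]\notin\mathfrak{q}$ of Lemma~\ref{base} is free: the plane $\mathfrak{p}=\langle e_1,e_3\rangle$ meets $\mathfrak{q}_1$ trivially, so Lemma~\ref{base} gives $[\mathfrak{p},\mathfrak{p}]=\langle[e_1,e_3]\rangle\not\subseteq\mathfrak{q}$, and the relations (\ref{fghl}) then hold by construction. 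Thus the whole converse reduces to excluding the scalar case $\bar A=\lambda\,\mathrm{id}$.

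The main obstacle is precisely this scalar case, which I would dispose of by a dichotomy on $\lambda$. If $\lambda=0$, then $[e_2,\mathfrak{q}]\subseteq\langle e_2\rangle$; choosing $w\in\mathfrak{q}\setminus\langle e_2\rangle$ with $[e_2,w]=0$ and an $e_1$ with $e_4:=[e_1,w]\notin\mathfrak{q}$ (available by Lemma~\ref{base}), the Jacobi identity gives $[e_2,e_4]=[[e_2,e_1],w]+[e_1,[e_2,w]]=0$, so $[\mathfrak{g},e_2]\subseteq\langle e_2\rangle$ and $\langle e_2\rangle$ is an ideal, contradicting the first step. If $\lambda\neq0$, I would rescale $e_2$ to make $\lambda=1$ and diagonalize $A$, producing $f_1,f_2\in\mathfrak{q}$ with $[e_2,f_i]=f_i$; putting $e_4=[f_1,f_2]$ the Jacobi identity forces $[e_2,e_4]=2e_4$, and since $\ad(e_2)$ is then diagonal with eigenvalues $0,1,1,2$ and cannot have the eigenvalue $3$, it also forces $[f_1,e_4]=[f_2,e_4]=0$. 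These relations identify $\mathfrak{g}$ with $\mathfrak{g}^1_{4,8}$ (the same algebra singled out in Proposition~\ref{nonstr}), which by Remark~\ref{rem1} has no two-dimensional generating subspace, contradicting the standing hypothesis on $\mathfrak{g}$. Therefore $\bar A$ is non-scalar, the required $e_1$ exists, and the basis $(e_1,e_2,e_3=[e_1,e_2])$ from Lemma~\ref{base} is obtained.
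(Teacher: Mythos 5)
Your argument is correct. It does not, however, follow the paper's own route: the paper proves the necessity in two lines by citing Proposition~\ref{nonstr} (which already contains the hard case analysis of $\ad(e_2)$ on $\mathfrak{s}=[e_2,\mathfrak{q}]$) together with Remark~\ref{rem1}, and then treats by hand the one leftover case $e_2\in\mathfrak{s}\neq\langle e_2\rangle$ not covered by that proposition's hypothesis. You instead give a self-contained proof organized around a single dichotomy: whether the induced operator $\bar A=\ad(e_2)$ on $\mathfrak{q}/\langle e_2\rangle$ is scalar. This subsumes all of the paper's subcases at once (the case $\dim\mathfrak{s}=1$, the three eigenvalue patterns when $\dim\mathfrak{s}=2$ with $e_2\notin\mathfrak{s}$, and the residual case $e_2\in\mathfrak{s}$ all land in your ``non-scalar'' branch, since $\bar A=\lambda\,\mathrm{id}$ with $\lambda\neq0$ forces $e_2\notin\mathfrak{s}$ and $\lambda=0$ forces $\mathfrak{s}\subset\langle e_2\rangle$), and it isolates exactly the two genuine obstructions: $\langle e_2\rangle$ being an ideal (ruled out by the hypothesis on $\mathfrak{q}$, using your correct preliminary observation that any one-dimensional ideal inside $\mathfrak{q}$ must lie in $\mathfrak{q}_1$) and $\mathfrak{g}\cong\mathfrak{g}^1_{4,8}$ (ruled out by Remark~\ref{rem1} and the standing hypothesis that $\mathfrak{g}$ has a two-dimensional generating subspace). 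The identification of $\mathfrak{g}^1_{4,8}$ via the eigenvalue-$3$ Jacobi computation reproduces verbatim the argument in the proof of Proposition~\ref{nonstr}, case 2c. What each approach buys: the paper's is shorter because it reuses an already-proved proposition; yours is longer but cleaner in structure, replacing the case split on $\dim\mathfrak{s}$ and on whether $e_2\in\mathfrak{s}$ by one linear-algebra criterion, and it even quietly repairs the slight gap in the paper's case $\dim\mathfrak{s}=1$ (where the nonzero eigenvalue $\alpha$ is asserted rather than justified). The only cosmetic omission is that in the $\lambda\neq0$ branch you should note, as you do in the $\lambda=0$ branch, that $e_4=[f_1,f_2]\notin\mathfrak{q}$ by Lemma~\ref{base}, so that $(e_2,f_1,f_2,e_4)$ really is a basis of $\mathfrak{g}$ before reading off the spectrum $\{0,1,1,2\}$.
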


\begin{proof}
The sufficiency is obvious. Let us prove the necessity.	

In consequence of Proposition \ref{nonstr} and Remark \ref{rem1}, it remains to consider the case when
$e_2\in \mathfrak{s}\neq \langle e_2\rangle$. It is clear that $\rm{ad}(e_2)$ maps isomorphically any two-dimensional subspace $\mathfrak{p}\subset\mathfrak{q}$  that does not contain $e_2,$ on  $\mathfrak{s}.$ Since $\dim(\mathfrak{p}\cap\mathfrak{s})=1$ then there exists a vector $e_1\in \mathfrak{p}$ such that $e_1\not\in\mathfrak{s}$ and 
$[e_1,e_2]\not\in\langle e_2\rangle.$ Hence the vectors $e_1,$ $e_2,$ $e_3:=[e_1,e_2]$ constitute a basis $\mathfrak{q}$ satisfying Lemma \ref{base}. 
\end{proof}

Subspaces  $\mathfrak{q}_1,\mathfrak{q}_2\subset\mathfrak{g}$  are said to be equivalent if $\mathfrak{q}_2=\xi(\mathfrak{q}_1)$ for some  automorphism $\xi$ of the Lie algebra $\mathfrak{g}$.

\begin{proposition}
\label{eq48}
Any two generating the Lie algebra $\mathfrak{g}^1_{4,8}$ three-dimensional subspaces are equivalent.
\end{proposition}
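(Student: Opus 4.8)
The plan is to recast the proposition as a transitivity statement: it suffices to show that $\operatorname{Aut}(\mathfrak{g}^1_{4,8})$ acts transitively on the set of three-dimensional generating subspaces, and then to move every such subspace to one fixed model subspace by explicit automorphisms. I would first record the characteristic structure of $\mathfrak{g}=\mathfrak{g}^1_{4,8}$ read off from Table \ref{Tab:list}: the derived algebra $I:=[\mathfrak{g},\mathfrak{g}]=\langle E_1,E_2,E_3\rangle$ is a Heisenberg algebra with $[E_2,E_3]=E_1$, and $[I,I]=\langle E_1\rangle$. Both $I$ and $\langle E_1\rangle$ are characteristic, so every automorphism preserves them; in particular the eigenvalues $2,1,1$ of $\ad E_4$ on $I$ are intrinsic.

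The crucial classification step is to prove that a three-dimensional subspace $\mathfrak{q}$ generates $\mathfrak{g}$ if and only if $E_1\notin\mathfrak{q}$. By Proposition \ref{gen} this amounts to showing $\mathfrak{q}$ is a subalgebra exactly when $E_1\in\mathfrak{q}$. If $E_1\notin\mathfrak{q}$, then $P:=\mathfrak{q}\cap I$ is a two-plane of $I$ transverse to $\langle E_1\rangle$, so it projects isomorphically onto $I/\langle E_1\rangle$; the Heisenberg bracket is then nondegenerate on $P$, giving $[P,P]=\langle E_1\rangle\not\subset\mathfrak{q}$, so $\mathfrak{q}$ is no subalgebra. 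Conversely, if $E_1\in\mathfrak{q}$ then either $\mathfrak{q}=I$, which is a subalgebra, or $\dim(\mathfrak{q}\cap I)=2$ and $\mathfrak{q}=\langle E_1,p,v\rangle$ with $p\in\langle E_2,E_3\rangle$ and $v=E_4+w$, $w\in I$; here $[E_1,v]=2E_1$, $[p,w]\in\langle E_1\rangle$, and the eigenvalue structure of $\ad E_4$ on $I$ gives $[p,E_4]\in\langle E_1,p\rangle$, whence $[\mathfrak{q},\mathfrak{q}]\subset\mathfrak{q}$.

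Next I would normalize the generating subspaces. Since $E_1\notin\mathfrak{q}$ means $\mathfrak{q}\oplus\langle E_1\rangle=\mathfrak{g}$, each generating subspace is a graph over $\mathfrak{g}/\langle E_1\rangle$ and is written uniquely as
$$\mathfrak{q}_{\ell_2,\ell_3,\ell_4}=\langle E_2+\ell_2E_1,\; E_3+\ell_3E_1,\; E_4+\ell_4E_1\rangle,\qquad \ell_2,\ell_3,\ell_4\in\mathbb{R},$$
with model $\mathfrak{q}_0:=\mathfrak{q}_{0,0,0}=\langle E_2,E_3,E_4\rangle$. It remains to carry each $\mathfrak{q}_{\ell_2,\ell_3,\ell_4}$ to $\mathfrak{q}_0$ by an automorphism, and for this inner automorphisms already suffice. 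The map $\exp(r\,\ad E_1)$ fixes $E_1,E_2,E_3$ and sends $E_4\mapsto E_4+2rE_1$, hence changes $\ell_4$ to $\ell_4+2r$; taking $r=-\ell_4/2$ clears $\ell_4$. Then $\exp(\ad(\alpha E_2+\beta E_3))=\mathrm{id}+\ad(\alpha E_2+\beta E_3)$ (the square of this $\ad$ vanishes) sends $(\ell_2,\ell_3)$ to $(\ell_2-\beta,\ell_3+\alpha)$ and changes $\ell_4$ by $-(\alpha\ell_2+\beta\ell_3)$; the choice $\beta=\ell_2$, $\alpha=-\ell_3$ clears $\ell_2$ and $\ell_3$, and the induced change of $\ell_4$ vanishes for exactly this choice. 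Thus every generating subspace is equivalent to $\mathfrak{q}_0$, and so any two are equivalent.

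The step deserving the most care, and the main obstacle, is the classification claim that the subalgebras among three-dimensional subspaces are precisely those containing $E_1$: the forward direction rests on the nondegeneracy of the Heisenberg bracket on a plane of $I$ transverse to $\langle E_1\rangle$, while the converse must be verified for every subspace through $E_1$, splitting into the cases $\mathfrak{q}=I$ and $\dim(\mathfrak{q}\cap I)=2$. Once this is settled, the automorphism reduction is routine and, pleasantly, uses no outer automorphisms; the extra $GL(2)$ symmetry of the eigenspace $\langle E_2,E_3\rangle$, special to $\alpha=1$, is not even needed here, though it is what explains why $\mathfrak{g}^1_{4,8}$ has a single equivalence class while $\mathfrak{g}^{\alpha}_{4,8}$ with $\alpha\neq 1$ has two.
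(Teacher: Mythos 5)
Your argument is correct, but it follows a genuinely different route from the paper's. The paper's proof is a two-line reduction: it observes (via Proposition~\ref{por} and the fact that every two-dimensional subspace of $\mathfrak{g}'$ containing $E_1$ is an ideal) that a generating $\mathfrak{q}$ cannot contain $E_1$, and then delegates everything else to Remark~\ref{rem1} and the \emph{proofs} of Propositions~\ref{nonstr} and~\ref{ideal}, where case 2(c) (second subcase) produces for any such $\mathfrak{q}$ an adapted basis $(e_1,e_2,e_3,e_4)$ with the fixed structure constants $[e_2,e_1]=e_1$, $[e_2,e_3]=e_3$, $[e_1,e_3]=e_4$, $[e_2,e_4]=2e_4$, $[e_1,e_4]=[e_3,e_4]=0$; equivalence then follows because the linear map matching two such bases is an automorphism. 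You instead prove the sharper, self-contained classification that $\mathfrak{q}$ generates if and only if $E_1\notin\mathfrak{q}$ (the forward direction via the nondegeneracy of the Heisenberg pairing on a plane of $I$ transverse to $\langle E_1\rangle$, the converse by the two-case check through $E_1$), parametrize all generating subspaces as graphs $\mathfrak{q}_{\ell_2,\ell_3,\ell_4}$ over $\langle E_2,E_3,E_4\rangle$, and clear the parameters with the explicit inner automorphisms $\exp(r\,\ad E_1)$ and $\exp(\ad(\alpha E_2+\beta E_3))$; I checked the bracket computations ($[E_1,E_4]=2E_1$, $(\ad X)^2=0$ for $X\in\langle E_1,E_2,E_3\rangle$, and the vanishing of the induced change of $\ell_4$ for $\beta=\ell_2$, $\alpha=-\ell_3$) and they are all right. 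What your version buys is independence from the canonical-form machinery of Section~3 together with the stronger observation that the adjoint group alone acts transitively on generating subspaces; what the paper's version buys is brevity and uniformity, since the adapted-basis construction it cites is needed anyway for the other Lie algebras.
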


\begin{proof}
It is easy to see that $\mathfrak{g}_{4,8}^{1}$ has only one one-dimensional ideal $\langle E_1\rangle$ and each two-dimensional subspace $\mathfrak{J}\subset\mathfrak{g}^{\prime}$ containing $E_1$ is an ideal of this Lie algebra. Then, by Proposition \ref{por},
every three-dimensional generating subspace $\mathfrak{q}$ of $\mathfrak{g}_{4,8}^{1}$ does not contain $E_1$. Hence  Proposition \ref{eq48} follows from Remark \ref{rem1}, Propositions \ref{nonstr}, \ref{ideal}, and their proofs.	
\end{proof}

\begin{proposition}
\label{nonstr1}	
Abnormal extremal (\ref{dif}) (and every its left shift) of a connected Lie group $G$ with Lie algebra $\mathfrak{g}^1_{4,8}$ and
left-invariant sub-Finsler quasimetric  $d$ defined by a seminorm $F$ on the three-dimensional generating subspace $\mathfrak{q}\subset\mathfrak{g}^1_{4,8}$ with a basis $(e_1,e_2,e_3)$ such that $[e_2,e_k]=e_k$, $k=1,3,$ is nonstrongly abnormal if and only if $F_U(0,s,0)=1/F(0,s,0)$. 
\end{proposition}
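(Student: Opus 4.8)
The plan is to repeat the Pontryagin-maximum-principle computation of Theorem~\ref{main1}, but in the eigenbasis of $\ad e_2$ produced in the second subcase of case 2c) of the proof of Proposition~\ref{nonstr}. The essential point, and the reason a separate statement is needed, is that for $\mathfrak{g}^1_{4,8}$ the operator $\ad e_2$ acts as the identity on the two-dimensional subspace $\mathfrak{s}=[e_2,\mathfrak{q}]$, so $[e_1,e_2]=-e_1$ is parallel to $e_1$; hence there is \emph{no} basis of $\mathfrak{q}$ with $e_3=[e_1,e_2]$, the basis $(e_1,e_2,e_3)$ is not one from Lemma~\ref{base}, and Theorem~\ref{main1} cannot be applied directly.

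First I would fix the structure constants. From that construction the vectors $e_1,e_2,e_3,e_4:=[e_1,e_3]$ form a basis of $\mathfrak{g}^1_{4,8}$ with $[e_2,e_1]=e_1$, $[e_2,e_3]=e_3$, $[e_2,e_4]=2e_4$ and $[e_1,e_4]=[e_3,e_4]=0$; that is, $C_{12}^1=-1$, $C_{23}^3=1$, $C_{13}^4=1$, $C_{24}^4=2$, and every other structure constant occurring in (\ref{psi1})--(\ref{psi4}) vanishes (compatibly with (\ref{fghl}) and Proposition~\ref{const}). Thus $e_1,e_3,e_4$ are eigenvectors of $\ad e_2$ with nonzero eigenvalues $1,1,2$ while $\ad e_2(e_2)=0$, and, exactly as in (\ref{u13}), the abnormal control is $u=u_2e_2$ with $u_2=s/F(se_2)$, $s=\pm1$, yielding the subgroup (\ref{dif}).

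Substituting this control into (\ref{psi1})--(\ref{psi4}) and using $[u,e_i]=u_2\ad e_2(e_i)$, the adjoint system decouples completely into
$$\psi_1'=u_2\psi_1,\qquad \psi_2'=0,\qquad \psi_3'=u_2\psi_3,\qquad \psi_4'=2u_2\psi_4,$$
whose solutions are $\psi_1(t)=\psi_1(0)e^{u_2t}$, $\psi_3(t)=\psi_3(0)e^{u_2t}$, $\psi_2\equiv\mathrm{const}$ and $\psi_4(t)=\psi_4(0)e^{2u_2t}$. For necessity I would argue as in Theorem~\ref{main1}: if (\ref{dif}) is nonstrongly abnormal it carries a normal covector $\psi(t)$ with $F_U(\psi_1,\psi_2,\psi_3)\equiv1$, so $(\psi_1,\psi_2,\psi_3)(t)$ remains on the boundary of the compact polar body $U^{\ast}$ and $\psi_1,\psi_3$ are bounded; since $u_2\neq0$ the exponentials force $\psi_1\equiv\psi_3\equiv0$. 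The Hamiltonian condition $\psi(u)=\psi_2u_2=1$ then gives $\psi_2\equiv sF(0,s,0)$, and $F_U(0,\psi_2,0)=1$ becomes, by positive homogeneity of $F_U$, exactly $F_U(0,s,0)=1/F(0,s,0)$.

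For sufficiency I would simply reverse this: assuming $F_U(0,s,0)=1/F(0,s,0)$, set $\psi_1\equiv\psi_3\equiv0$, $\psi_2\equiv sF(0,s,0)$ and take any nonzero solution $\psi_4$ of $\psi_4'=2u_2\psi_4$; these satisfy (\ref{psi1})--(\ref{psi4}) and, by homogeneity, $F_U(0,\psi_2,0)=F(0,s,0)F_U(0,s,0)=1=\psi(u)$, so (\ref{dif}) satisfies the PMP with $M\equiv1$ as a normal extremal. The main obstacle is conceptual rather than computational: one must realize that this algebra falls outside the Lemma~\ref{base} normalization and that, in contrast to Theorem~\ref{main1} (where $\psi_1$ may be an arbitrary constant $k(s)$), the exponential growth here pins $\psi_1$ to $0$, which is precisely why the criterion sharpens from $F_U(k(s),s,0)=1/F(0,s,0)$ to $F_U(0,s,0)=1/F(0,s,0)$.
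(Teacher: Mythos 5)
Your proof is correct and follows essentially the same route as the paper: the same eigenbasis structure constants, the same decoupled adjoint system $\psi_1'=u_2\psi_1$, $\psi_2'=0$, $\psi_3'=u_2\psi_3$, $\psi_4'=2u_2\psi_4$, and the same boundedness-of-$U^{\ast}$ argument forcing $\psi_1\equiv\psi_3\equiv 0$ and hence $F_U(0,s,0)=1/F(0,s,0)$. One terminological nit: under the paper's convention the basis here \emph{is} ``from Lemma \ref{base}'' (condition (\ref{fghl}) holds); what fails is only the extra normalization $e_3=[e_1,e_2]$ required in Theorem \ref{main1}, which is indeed the correct reason this case needs separate treatment.
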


\begin{proof}
It follows from Proposition \ref{eq48} and the proof of Proposition \ref{nonstr} that for any three-dimensional
subspace $\mathfrak{q}$, generating the Lie algebra  $\mathfrak{g}^1_{4,8}$, there exists a basis $(e_1,e_2,e_3)$ such that $[e_1,e_2]=-e_1$, $[e_2,e_3]=e_3$.
Taking into account the equalities (\ref{u13}), equations (\ref{psi1})--(\ref{psi4}) can be rewritten as a system
$$\psi_1^{\prime}=u_2\psi_1,\,\,\psi_2^{\prime}=0,\,\,\psi_3^{\prime}=u_2\psi_3,\,\, \psi_4^{\prime}=2u_2\psi_4,$$
whose general solution has a form
$$\psi_1(t)=\varphi_1e^{u_2t},\,\,\psi_2(t)=\varphi_2,\,\,\psi_3(t)=\varphi_3e^{u_2t},\,\,\psi_4(t)=\varphi_4e^{2u_2t},$$
where $\varphi_i$, $i=1,\dots,4$, are some constant numbers. 
	
According to PMP and (\ref{u13}), abnormal extremal (\ref{dif}) (and every its left shift) in the space $(G,d)$ is nonstrongly abnormal if and only if  there are $\varphi_1,\varphi_3\in\mathbb{R}$ such that $F_U\left(\varphi_1e^{u_2t},sF(0,s,0),\varphi_3e^{u_2t}\right)=1$  for almost all $t\in\mathbb{R}$. This and the
boundedness condition for the body $U^{\ast}$, dual to $U$, imply that $\varphi_1=\varphi_3=0$, i.e. $F_U(0,s,0)=1/F(0,s,0)$.	
\end{proof}	

Further, $(\cdot,\cdot)$ denotes the scalar product with orthonormal
basis $(e_1,e_2,e_3)$ from Proposition \ref{nonstr1} or Theorem \ref{main1}.

\begin{theorem}
Let $d$ be a left-invariant sub-Riemannian metric on a connected four-dimensional Lie group $G$ with a Lie algebra $(\mathfrak{g},[\cdot,\cdot])$ defined by a scalar product 
$(\cdot,\cdot)_1$ on three-dimensional subspace $\mathfrak{q}\subset\mathfrak{g}$ with orthonormal
basis $(\tilde{e}_1,\tilde{e}_2,\tilde{e}_3),$ where $\tilde{e}_2\parallel e_2,$ satisfying the conditions of Lemma \ref{base}. Аbnormal extremal (\ref{dif}) is nonstrongly abnormal if and only if $\mathfrak{s}=[e_2,\mathfrak{q}]\subset \langle \tilde{e}_1, \tilde{e}_3\rangle$.
\end{theorem}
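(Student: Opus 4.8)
The plan is to specialize the mechanism behind Theorem \ref{main1} to the sub-Riemannian situation, where the decisive extra feature is that the unit ball $U$ of the scalar product $(\cdot,\cdot)_1$ is a smooth, strictly convex ellipsoid, so the dual body $U^{\ast}$ is smooth and strictly convex as well. First I would record what nonstrong abnormality means here: by the Pontryagin Maximum Principle (as exploited in the proof of Theorem \ref{main1}), the extremal (\ref{dif}) is nonstrongly abnormal precisely when it also carries a normal covector, i.e. a solution $\psi(t)$ of the reduced adjoint system (\ref{system}) for which the constant control $u_0=(0,u_2,0)$, $u_2=s/F(0,s,0)$, maximizes $\psi(\cdot)$ over $U$ with $M\equiv 1$ (see (\ref{m}), (\ref{fp}), (\ref{u13})).

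The key step is a rigidity observation absent in the general sub-Finsler case: since $U$ is a Euclidean ball, for the fixed control direction $u_0$ the max-normalized maximizing covector is unique and equals the metric dual of $u_0$. Concretely, the horizontal part $(\psi_1,\psi_2,\psi_3)$ is forced to be constant with $\psi_i=u_2\,(e_2,e_i)_1$, $i=1,2,3$, and one checks directly that then $F_U(\psi_1,\psi_2,\psi_3)=\psi(u_0)=1$ automatically. Thus nonstrong abnormality is equivalent to this particular constant covector being a genuine solution of (\ref{system}): the component $\psi_4$ is irrelevant to maximality and is recovered from its (always solvable) linear inhomogeneous equation in (\ref{system}), while $\psi_2'\equiv 0$ holds automatically, so the only constraints are $\psi_1'=0$ and $\psi_3'=0$.

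Feeding the explicit values $\psi_i=u_2(e_2,e_i)_1$ into $\psi_1'=-u_2\psi_3=0$ and $\psi_3'=u_2\big(C_{23}^1\psi_1+C_{23}^2\psi_2+C_{23}^3\psi_3\big)=0$ from (\ref{system}), I would reduce the two constraints to the orthogonality relations $(e_2,e_3)_1=0$ and $\big(e_2,\,C_{23}^1e_1+C_{23}^2e_2\big)_1=0$. Finally I would translate these into the stated subspace inclusion. Using the basis $(e_1,e_2,e_3=[e_1,e_2])$ from Lemma \ref{base} and (\ref{fghl}), a short computation gives $\mathfrak{s}=[e_2,\mathfrak{q}]=\langle[e_2,e_1],[e_2,e_3]\rangle=\langle e_3,\;C_{23}^1e_1+C_{23}^2e_2\rangle$, while $\tilde e_2\parallel e_2$ together with orthonormality of $(\tilde e_1,\tilde e_2,\tilde e_3)$ identifies $\langle\tilde e_1,\tilde e_3\rangle$ with the orthogonal complement $e_2^{\perp}\cap\mathfrak{q}$. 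Hence $\mathfrak{s}\subset\langle\tilde e_1,\tilde e_3\rangle$ if and only if both generators of $\mathfrak{s}$ are $(\cdot,\cdot)_1$-orthogonal to $e_2$, which is exactly the pair of relations obtained above.

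I expect the main obstacle to be the rigidity step, namely pinning down that the normal covector's horizontal part must be constant and equal to the metric dual $u_2(e_2,\cdot)_1$; this is where strict convexity of the Euclidean ball is used to eliminate the ``flat-face'' freedom that in the general case produces the free parameter $k(s)$ in the first alternative of Theorem \ref{main1}. The remaining work is bookkeeping: keeping the two basis conventions of Lemma \ref{base} straight, verifying that $\psi_4$ imposes no further condition, and confirming that the degenerate case $C_{23}^1=C_{23}^2=0$ (where $\mathfrak{s}=\langle e_3\rangle$ is one-dimensional) is covered by the same equivalence.
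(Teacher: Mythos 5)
Your proof is correct, but it takes a genuinely different route from the paper's. The paper derives this theorem by running through five cases --- the two universal cases of Proposition \ref{threed}, the $\mathfrak{g}^1_{4,8}$ case of Proposition \ref{nonstr1}, and the two alternatives of Theorem \ref{main1} --- and in each case translating the relevant support-function identity for $F_U$ into the statement that the tangent plane of the ellipsoid $\partial U$ at $se_2/F(se_2)$ is parallel to $\langle\tilde e_1,\tilde e_3\rangle$ and contains $\mathfrak{s}$. You instead give one uniform argument: strict convexity and smoothness of the Euclidean unit ball force the horizontal part of any normal covector along (\ref{dif}) to be the constant metric dual $u_2(e_2,\cdot)_1$, and substituting this constant into the adjoint equations yields precisely $[e_1,e_2]\perp e_2$ and $[e_2,e_3]\perp e_2$ with respect to $(\cdot,\cdot)_1$, i.e. $\mathfrak{s}\subset e_2^{\perp}\cap\mathfrak{q}=\langle\tilde e_1,\tilde e_3\rangle$. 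This is shorter and eliminates the case split; it also lets you drop the extra normalization $e_3=[e_1,e_2]$, provided you run the computation with the general system (\ref{psi1})--(\ref{psi4}) under (\ref{u13}) rather than with (\ref{system}): since $C_{12}^4=C_{23}^4=0$, the conditions $\psi_1'=\psi_3'=0$ read $\psi([e_1,e_2])=\psi([e_2,e_3])=0$ and involve only the horizontal part, so the same orthogonality relations come out for an arbitrary basis from Lemma \ref{base}, automatically covering the degenerate situations $\mathfrak{s}=\{0\}$ and $e_2\in\mathfrak{s}$ that the paper handles separately via Proposition \ref{threed}, as well as the $\mathfrak{g}^1_{4,8}$ case. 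What the paper's route buys is an explicit verification of how each previously established sub-Finsler criterion specializes to the Riemannian ball; what yours buys is a single self-contained equivalence.
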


\begin{proof}
According to Propositions \ref{threed}, \ref{nonstr}, we consider five cases: two cases from Proposition \ref{threed}, 
one case in Proposition \ref{nonstr1} and two cases in Theorem \ref{main1}. 
	
The criterion $\langle e_2\rangle =\mathfrak{C}(\mathfrak{q})$ for nonstrong abnormality of the extremal  (\ref{dif}) for any quasimetric  (in particular, the sub-Riemannian metric) $d$ on $G$ is equivalent to the equality $\mathfrak{s}=\{0\}$, i.e. 
$\mathfrak{s}\subset  \langle \tilde{e}_1, \tilde{e}_3\rangle.$  
	
The extremal (\ref{dif}) is strongly abnormal for any quasimetric (in particular, the sub-Riemannian metric) $d$ on $G$ for $\langle e_2\rangle \subset \mathfrak{s},$ which always gives $\mathfrak{s}\not\subset \langle \tilde{e}_1, \tilde{e}_3\rangle.$
	
Set $\mathfrak{g}=\mathfrak{g}^1_{4,8}.$ By the last equality in Proposition
\ref{nonstr1} for $F(u)=\sqrt{(u,u)_1}$, $se_2$ is $(\cdot,\cdot)$-orthogonal
to the tangent plane for $\partial U$ at the point
$se_2/F(e_2).$ This plane is parallel to $\langle \tilde{e}_1, \tilde{e}_3\rangle,$ and  $\langle \tilde{e}_1, \tilde{e}_3\rangle= \langle e_1, e_3\rangle =\mathfrak{s}.$ 
	
In the first case of Theorem \ref{main1}, $\mathfrak{s}=\langle e_3\rangle,$ and it follows from the corresponding
equality for $F_{U}$ in the case $F(u)=\sqrt{(u,u)_1}$ that
$(k(s),s,0)$ is $(\cdot,\cdot)$-orthogonal to the tangent plane to $\partial U$ at the point
$se_2/F(e_2).$ This plane is parallel to $\langle \tilde{e}_1, \tilde{e}_3\rangle,$ and $\mathfrak{s}=\langle e_3\rangle\subset \langle \tilde{e}_1, \tilde{e}_3\rangle.$ 
	
In the second case of Theorem \ref{main1} we get that for $F(u)=\sqrt{(u,u)_1},$ the vector $(-C^2_{23}s/C^1_{23},s,0)$ and the tangent plane to $\partial U$ 
at the point $se_2/F(e_2)$ are orthogonal relative to $(\cdot,\cdot).$  This plane is parallel to the plane $\langle \tilde{e}_1, \tilde{e}_3\rangle,$ which contains the vectors $[e_2,e_3]=C^1_{23}e_1+ C^2_{23}e_2,$ $[e_1,e_2]=e_3,$ therefore $\mathfrak{s}=\langle \tilde{e}_1, \tilde{e}_3\rangle.$ 
\end{proof} 

\begin{proposition}
\label{afterrem}
If a four-dimensional real Lie algebra $\mathfrak{g}$ has a two-dimensional generating subspace $\mathfrak{p}_0$
then there exists a basis $(e_1,e_2,e_3,e_4)$ for $\mathfrak{g}$ such that $e_1,e_2\in\mathfrak{p}_0$ and $[e_1,e_2]=e_3$, $[e_1,e_3]=e_4$, $C_{23}^4=0.$ 
A three-dimensional subspace $\mathfrak{q}_0:=\mathfrak{p}_0\oplus [\mathfrak{p}_0,\mathfrak{p}_0]$
generates the Lie algebra $\mathfrak{g}$ and the basis $(e_1,e_2,e_3)$ of the subspace $\mathfrak{q}_0$ satisfies Lemma \ref{base}. If  
$\tilde{\mathfrak{p}}\subset\mathfrak{g}$ is equivalent to $\mathfrak{p}_0$ then the corresponding three-dimensional subspaces 
$\tilde{\mathfrak{q}}=\tilde{\mathfrak{p}}\oplus [\tilde{\mathfrak{p}},\tilde{\mathfrak{p}}]$ and $\mathfrak{q}_0$ are equivalent.
\end{proposition}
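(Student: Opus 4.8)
The plan is to establish the three assertions in turn, with essentially all the work in the construction of the basis. I begin from the observation that $\mathfrak{p}_1:=[\mathfrak{p}_0,\mathfrak{p}_0]$ does not depend on the chosen basis of $\mathfrak{p}_0$: a change of basis of the two-dimensional space $\mathfrak{p}_0$ multiplies the single bracket $[e_1,e_2]$ by a nonzero determinant, so $\mathfrak{p}_1$ is the same subspace for every basis. Furthermore $\mathfrak{p}_1\neq\{0\}$ and $\mathfrak{p}_1\cap\mathfrak{p}_0=\{0\}$; otherwise $[\mathfrak{p}_0,\mathfrak{p}_0]\subseteq\mathfrak{p}_0$, so $\mathfrak{p}_0$ would be a subalgebra and could not generate the four-dimensional $\mathfrak{g}$. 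Hence $\mathfrak{p}_1$ is a line, $\mathfrak{q}_0=\mathfrak{p}_0\oplus\mathfrak{p}_1$ is a fixed three-dimensional subspace determined by $\mathfrak{p}_0$, and $\mathfrak{g}/\mathfrak{q}_0$ is one-dimensional.

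Next I would fix a generator $\bar e_3$ of $\mathfrak{p}_1$ and consider the linear map $\phi\colon\mathfrak{p}_0\to\mathfrak{g}/\mathfrak{q}_0$ given by $\phi(x)=[x,\bar e_3]+\mathfrak{q}_0$. The decisive point is that $\phi\neq 0$: if $\phi\equiv 0$ then $[\mathfrak{p}_0,\mathfrak{p}_1]\subseteq\mathfrak{q}_0$, which together with $[\mathfrak{p}_0,\mathfrak{p}_0]=\mathfrak{p}_1\subseteq\mathfrak{q}_0$ shows that $\mathfrak{q}_0$ is a subalgebra containing $\mathfrak{p}_0$; then the subalgebra generated by $\mathfrak{p}_0$ would lie in the three-dimensional $\mathfrak{q}_0$, contradicting the hypothesis that $\mathfrak{p}_0$ generates $\mathfrak{g}$. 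This step --- using the generating property to rule out that $\mathfrak{q}_0$ is a subalgebra --- is the main obstacle; the remainder is bookkeeping.

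With $\phi\neq 0$ its kernel is a line in $\mathfrak{p}_0$, and I would choose $e_2\in\ker\phi\setminus\{0\}$ and $e_1\in\mathfrak{p}_0\setminus\ker\phi$, so that $(e_1,e_2)$ is a basis of $\mathfrak{p}_0$; then I set $e_3:=[e_1,e_2]$, a nonzero multiple of $\bar e_3$, and $e_4:=[e_1,e_3]$. Since $\phi(e_1)\neq 0$ we have $[e_1,e_3]\notin\mathfrak{q}_0$, so $(e_1,e_2,e_3,e_4)$ is a basis of $\mathfrak{g}$; since $\phi(e_2)=0$ we have $[e_2,e_3]\in\mathfrak{q}_0$, i.e. $C_{23}^4=0$. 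Reading the structure constants off from $[e_1,e_2]=e_3$, $[e_1,e_3]=e_4$ and $C_{23}^4=0$ yields $C_{13}^1=C_{13}^2=C_{13}^3=0$, $C_{13}^4=1$, $C_{12}^4=C_{23}^4=0$, which is exactly (\ref{fghl}); thus $(e_1,e_2,e_3)$ is a basis from Lemma \ref{base}, proving the first assertion. (One then checks that $e_2$ spans $\mathfrak{q}_0\cap\mathfrak{N}(\mathfrak{q}_0)$, since $[e_2,e_1]=-e_3$ and $[e_2,e_3]$ both lie in $\mathfrak{q}_0$.)

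It remains to record the last two assertions, both of which are short. The subspace $\mathfrak{q}_0$ generates $\mathfrak{g}$ simply because $\mathfrak{p}_0\subseteq\mathfrak{q}_0$ and $\mathfrak{p}_0$ already generates $\mathfrak{g}$. For the equivalence statement, if $\tilde{\mathfrak{p}}=\xi(\mathfrak{p}_0)$ for an automorphism $\xi$ of $\mathfrak{g}$, then $\xi$ preserves brackets, so $\xi([\mathfrak{p}_0,\mathfrak{p}_0])=[\tilde{\mathfrak{p}},\tilde{\mathfrak{p}}]$ and hence $\xi(\mathfrak{q}_0)=\tilde{\mathfrak{p}}\oplus[\tilde{\mathfrak{p}},\tilde{\mathfrak{p}}]=\tilde{\mathfrak{q}}$, which exhibits $\mathfrak{q}_0$ and $\tilde{\mathfrak{q}}$ as equivalent. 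I expect no difficulty with these final steps; the only genuine subtlety is the simultaneous realization of $C_{23}^4=0$ and $[e_1,e_3]\notin\mathfrak{q}_0$, achieved through the functional $\phi$.
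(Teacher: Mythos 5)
Your argument is correct, and for the main assertion it is genuinely more than the paper offers: the paper disposes of the first claim (existence of the basis with $[e_1,e_2]=e_3$, $[e_1,e_3]=e_4$, $C_{23}^4=0$) by citing \cite{BerZub21}, whereas you supply a complete self-contained construction. Your key device --- the linear functional $\phi(x)=[x,\bar e_3]+\mathfrak{q}_0$ on $\mathfrak{p}_0$ with values in the one-dimensional quotient $\mathfrak{g}/\mathfrak{q}_0$, whose nonvanishing is forced because otherwise $\mathfrak{q}_0$ would be a proper subalgebra containing the generating set $\mathfrak{p}_0$ --- cleanly produces both requirements at once: $e_2$ spanning $\ker\phi$ gives $C_{23}^4=0$, and $e_1\notin\ker\phi$ gives $[e_1,e_3]\notin\mathfrak{q}_0$, so that $(e_1,e_2,e_3,e_4)$ is a basis and (\ref{fghl}) holds. (The only pedantic point worth adding is that concluding $\mathfrak{q}_0$ is a subalgebra also uses $[\mathfrak{p}_1,\mathfrak{p}_1]=\{0\}$, trivial since $\mathfrak{p}_1$ is a line; and one should say once that $\mathfrak{p}_1\neq\{0\}$ and $\mathfrak{p}_1\cap\mathfrak{p}_0=\{0\}$ both follow from $\mathfrak{p}_0$ not being a subalgebra, which you do.) Your treatment of the second assertion ($\mathfrak{p}_0\subset\mathfrak{q}_0$ generates, and (\ref{fghl}) is the content of Lemma \ref{base}) and of the third (an automorphism $\xi$ with $\tilde{\mathfrak{p}}=\xi(\mathfrak{p}_0)$ satisfies $\xi([\mathfrak{p}_0,\mathfrak{p}_0])=[\tilde{\mathfrak{p}},\tilde{\mathfrak{p}}]$, hence $\xi(\mathfrak{q}_0)=\tilde{\mathfrak{q}}$) coincides with the paper's. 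What your route buys is independence from the external reference; what the paper's buys is brevity.
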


\begin{proof}
The first statement of Proposition	\ref{afterrem} was proved in \cite{BerZub21}. It immediately implies the second statement of this proposition. 
If $\tilde{\mathfrak{p}}=\xi(\mathfrak{p}_0)$ for some automorphism $\xi$ of the Lie algebra $\mathfrak{g}$ then $(\xi(e_1),\xi(e_2),\xi(e_3))$ is a basis of the subspace $\tilde{\mathfrak{q}}$ because $\xi(e_3)=\xi([e_1,e_2])=[\xi(e_1),\xi(e_2)]$. Consequently, $\tilde{\mathfrak{q}}=\xi(\mathfrak{q}_0)$.
\end{proof}

\begin{proposition}
\label{solv2}
A three-dimensional subspace $\mathfrak{q}\subset \mathfrak{g}$ generates $\mathfrak{g}=2\mathfrak{g}_{2,1}$ if and only if ${\rm dim}(\mathfrak{q}\cap\mathfrak{J})=1$ for every two-dimensional ideal $\mathfrak{J}$ of the Lie algebra $\mathfrak{g}$. 
There exist two equivalence classes of such subspaces; 
$\mathfrak{q}$ belongs to the first (the second) equivalence class if $\mathfrak{q}$ contains one (contains no) one-dimensional ideal of  $\mathfrak{g}$. 
\end{proposition}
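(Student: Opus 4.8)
The plan is to work throughout with the explicit structure of $\mathfrak{g}=2\mathfrak{g}_{2,1}$ recorded in Table \ref{Tab:list}, namely $[E_1,E_2]=E_1$, $[E_3,E_4]=E_3$ and all other brackets of basis vectors zero, so that the derived algebra is $\mathfrak{g}'=\langle E_1,E_3\rangle$. First I would determine all small-dimensional ideals by a direct computation with $\mathrm{ad}$: a line $\langle v\rangle$ is an ideal exactly when $v$ is a common eigenvector of every $\mathrm{ad}\,w$, and inspecting $\mathrm{ad}\,E_2,\mathrm{ad}\,E_4$ shows the only such lines are $\langle E_1\rangle$ and $\langle E_3\rangle$. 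For the two-dimensional ideals $\mathfrak{J}$ I would split according to $\dim\pi(\mathfrak{J})$, where $\pi\colon\mathfrak{g}\to\mathfrak{g}/\mathfrak{g}'$ is the quotient map. The case $\mathfrak{J}\subset\mathfrak{g}'$ forces $\mathfrak{J}=\mathfrak{g}'$; the case $\dim\pi(\mathfrak{J})=2$ is impossible, since then $E_1=[E_1,E_2]\in[\mathfrak{g},\mathfrak{J}]\subset\mathfrak{J}$ would contradict $\mathfrak{J}\cap\mathfrak{g}'=\{0\}$; and the case $\dim\pi(\mathfrak{J})=1$ yields exactly $\langle E_1,E_2\rangle$ and $\langle E_3,E_4\rangle$. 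Thus there are precisely three two-dimensional ideals $\langle E_1,E_2\rangle,\langle E_3,E_4\rangle,\langle E_1,E_3\rangle$ and two one-dimensional ideals $\langle E_1\rangle,\langle E_3\rangle$.

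With the ideals in hand the generating criterion is almost immediate. The forward implication is exactly Proposition \ref{por}: a generating $\mathfrak{q}$ meets every two-dimensional ideal in dimension $n-3=1$. For the converse I would argue by contraposition through Proposition \ref{gen}: if $\mathfrak{q}$ fails to generate it is a subalgebra, and I want to produce a two-dimensional ideal met in dimension $2$. Assuming the dimension condition, applying it to $\mathfrak{J}=\mathfrak{g}'$ gives $\dim(\mathfrak{q}\cap\mathfrak{g}')=1$, so $\mathfrak{q}$ surjects onto $\mathfrak{g}/\mathfrak{g}'$; writing $\mathfrak{q}=\langle u,w_2,w_4\rangle$ with $u$ spanning $\mathfrak{q}\cap\mathfrak{g}'$ and $w_2,w_4$ lifting a basis of the quotient, the subalgebra condition forces $[u,w_2],[u,w_4],[w_2,w_4]\in\langle u\rangle$. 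A short computation of these three brackets (using $[\mathfrak{g}',\mathfrak{g}']=\{0\}$) shows $u$ must be proportional to $E_1$ or to $E_3$, and then $\langle E_1,E_2\rangle\subset\mathfrak{q}$ or $\langle E_3,E_4\rangle\subset\mathfrak{q}$, contradicting the assumption. Hence $\mathfrak{q}$ is not a subalgebra and generates $\mathfrak{g}$.

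For the classification into two classes I would first observe that any automorphism permutes $\langle E_1\rangle,\langle E_3\rangle$, so the property ``$\mathfrak{q}$ contains a one-dimensional ideal'' is automorphism-invariant; this gives at least two classes (both nonempty, as explicit examples show) and shows a generating $\mathfrak{q}$ can contain at most one of $\langle E_1\rangle,\langle E_3\rangle$, since containing both would force $\langle E_1,E_3\rangle\subset\mathfrak{q}$. To bound the number of classes above by two I would use the block automorphisms coming from $\mathrm{Aut}(\mathfrak{g}_{2,1})\times\mathrm{Aut}(\mathfrak{g}_{2,1})$, namely $E_1\mapsto\lambda E_1,\ E_2\mapsto pE_1+E_2,\ E_3\mapsto\mu E_3,\ E_4\mapsto r'E_3+E_4$ with $\lambda,\mu\neq0$, together with the swap $\sigma\colon E_1\leftrightarrow E_3,\ E_2\leftrightarrow E_4$, and reduce each class to a single normal form.

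In the first class, after applying $\sigma$ if necessary I may assume $\langle E_1\rangle\subset\mathfrak{q}$; the generating conditions then simplify to $E_2,E_3\notin\mathfrak{q}$, which pins $\mathfrak{q}$ to the form $\langle E_1,\ E_2-xE_3,\ E_4-zE_3\rangle$ with $x\neq0$, and a block automorphism (taking $\mu=1/x$ and $r'$ suitably) carries this to $\langle E_1,E_2-E_3,E_4\rangle$. In the second class $E_1,E_3\notin\mathfrak{q}$ forces $\mathfrak{q}\cap\mathfrak{g}'=\langle\alpha E_1+\gamma E_3\rangle$ with $\alpha,\gamma\neq0$, and surjectivity onto $\mathfrak{g}/\mathfrak{g}'$ gives the form $\langle\alpha E_1+\gamma E_3,\ E_2+p_2E_1,\ E_4+p_4E_1\rangle$, which a block automorphism normalizes to $\langle E_1+E_3,E_2,E_4\rangle$. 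I expect the main obstacle to be organizing these normal-form reductions cleanly, namely verifying that the listed conditions are exactly the ones that survive and that the chosen automorphisms really collapse every free parameter, rather than any single deep step; once the ideal structure and the block-plus-swap automorphisms are fixed, the remaining computations are routine.
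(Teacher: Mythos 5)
Your proposal is correct and follows essentially the same route as the paper: identify the three two-dimensional ideals and two one-dimensional ideals, get necessity of the intersection condition from Proposition \ref{por}, split the generating subspaces according to whether they contain a one-dimensional ideal, and reduce each case to a single normal form. The only differences are cosmetic: you make the sufficiency direction explicit via contraposition and write out the block-plus-swap automorphisms, where the paper instead exhibits canonical bases with identical structure constants (which implicitly define the same equivalences).
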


\begin{proof}
Let $\mathfrak{g}^1_{2,1}$ and $\mathfrak{g}^2_{2,1}$ denote the first and the second copies of the Lie algebra $\mathfrak{g}_{2,1}$ respectively, and
$\mathfrak{L}_1,\mathfrak{L}_2$ be their one-dimensional ideals. The Lie algebra $\mathfrak{g}$ has three two-dimensional ideals:
$\mathfrak{g}^1_{2,1},$ $\mathfrak{g}^2_{2,1},$ $\mathfrak{g}'=\mathfrak{L}_1\oplus \mathfrak{L}_2.$
In consequence of Proposition \ref{por}, the intersection of a subspace $\mathfrak{q}$ with each of them is one-dimensional.
Two cases are possible:
	
(1) the intersection of a subspace $\mathfrak{q}$ with exactly one of the Lie algebras $\mathfrak{g}^1_{2,1},$  
$\mathfrak{g}^2_{2,1}$ is a one-dimensional ideal; without loss of generality,
we can assume that $\mathfrak{q}\cap \mathfrak{g}^2_{2,1}=\mathfrak{L}_2;$ 
	
(2) $\mathfrak{q}\cap \mathfrak{g}^k_{2,1}\neq \mathfrak{L}_k,$ $k=1,2.$ 
	
It is clear that subspaces $\mathfrak{q}$ of the first and the second types are
not equivalent.
	
In the first case, there exists a basis $(e_1,e_2,e_3)$ for  $\mathfrak{q}$ such that
$$e_1\in \mathfrak{q}\cap \mathfrak{g}^1_{2,1},\quad e_2\in \mathfrak{L}_2,\quad
e_3=f_1 + f_2,\quad 0\neq f_1\in \mathfrak{L}_1,\quad f_2\in \mathfrak{g}^2_{2,1},\quad f_2\notin\mathfrak{L}_2,$$
moreover, $[e_1,e_3]=f_1=e_4$, $[e_2,e_3]=e_2$, $[e_1,e_4]=e_4,$
and all other Lie brackets for the basis vectors  $e_1,e_2,e_3,e_4$ are zero.
It follows from this that any two subspaces  $\mathfrak{q}$ of the first type are equivalent.

In the second case, there exists a basis $(e_1,e_2,e_3)$ for  $\mathfrak{q}$ such that
$$e_1\in \mathfrak{q}\cap \mathfrak{g}^1_{2,1},\quad e_2=e_1+f_2,\quad f_2\in \mathfrak{q}\cap\mathfrak{g}^2_{2,1},\quad
e_3\in\mathfrak{g}',\quad e_3\notin\mathfrak{L}_1,\quad e_3\notin\mathfrak{L}_2,$$
and the components of the vector $e_2$ at $E_2$ and $E_4$ are equal to $1$. 
Then $e_4:=[e_1,e_3]\in\mathfrak{L}_1$, $[e_2,e_3]=-e_3$, $[e_1,e_4]=[e_2,e_4]=-e_4,$
and all other Lie brackets for the basis vectors $e_1,e_2,e_3,e_4$ are zero.
It follows from this that any two subspaces  $\mathfrak{q}$ of the second type are equivalent.	
\end{proof}

\begin{proposition}
\label{solv3,6}
A three-dimensional subspace $\mathfrak{q}$ of the Lie algebra $\mathfrak{g}=\mathfrak{g}_{3,6}\oplus \mathfrak{g}_1$
generates $\mathfrak{g}$ if and only if $\mathfrak{q}\neq\mathfrak{g}_{3,6}$ and the projection of $\mathfrak{q}$ to 
$\mathfrak{g}_{3,6}$ along $\mathfrak{g}_1$ isn't a two-dimensional subalgebra of the Lie algebra $\mathfrak{g}_{3,6}$.
There exist five equivalence classes of such subspaces.
\end{proposition}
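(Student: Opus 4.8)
The plan is to exploit the decomposition $\mathfrak{g}=\mathfrak{h}\oplus\mathfrak{z}$, where $\mathfrak{h}:=\mathfrak{g}_{3,6}=\langle E_1,E_2,E_3\rangle$ is simple (isomorphic to $\mathfrak{sl}(2,\mathbb{R})$, since its Killing form $\kappa$ comes out with signature $(2,1)$) and $\mathfrak{z}:=\mathfrak{g}_1=\langle E_4\rangle=\mathfrak{C}(\mathfrak{g})$ is the centre. Let $\pi:\mathfrak{g}\to\mathfrak{h}$ be the projection along $\mathfrak{z}$. By Proposition \ref{gen} a three-dimensional $\mathfrak{q}$ generates $\mathfrak{g}$ iff it is not a subalgebra, and the decisive dichotomy is whether $\mathfrak{z}\subset\mathfrak{q}$ or $\mathfrak{z}\cap\mathfrak{q}=\{0\}$, which (since $\dim\mathfrak{z}=1$) are the only possibilities. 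Note from Table \ref{Tab:list} that $E_1,E_2,E_3$ are all brackets of basis vectors, so $\mathfrak{h}$ is perfect, i.e. $[\mathfrak{h},\mathfrak{h}]=\mathfrak{h}$.

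First I would establish the generation criterion. If $\mathfrak{z}\subset\mathfrak{q}$, then $\mathfrak{q}=\mathfrak{p}\oplus\mathfrak{z}$ with $\mathfrak{p}=\pi(\mathfrak{q})$ two-dimensional; as $\mathfrak{z}$ is central, $[\mathfrak{q},\mathfrak{q}]=[\mathfrak{p},\mathfrak{p}]\subset\mathfrak{h}$, so $\mathfrak{q}$ is a subalgebra exactly when $[\mathfrak{p},\mathfrak{p}]\subset\mathfrak{p}$, i.e. when $\mathfrak{p}$ is a two-dimensional subalgebra of $\mathfrak{g}_{3,6}$ (and here $\mathfrak{q}\ne\mathfrak{g}_{3,6}$ holds automatically, since $E_4\in\mathfrak{q}$). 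If instead $\mathfrak{z}\cap\mathfrak{q}=\{0\}$, then $\pi|_{\mathfrak{q}}$ is an isomorphism onto $\mathfrak{h}$, so $\mathfrak{q}$ is the graph $\{X+\phi(X)E_4:X\in\mathfrak{h}\}$ of some $\phi\in\mathfrak{h}^{\ast}$; because $E_4$ is central, $\mathfrak{q}$ is a subalgebra iff $\phi([\mathfrak{h},\mathfrak{h}])=0$, and by perfectness this forces $\phi=0$, i.e. $\mathfrak{q}=\mathfrak{g}_{3,6}$. In this second case $\pi(\mathfrak{q})=\mathfrak{h}$ is three-dimensional, so the ``two-dimensional subalgebra'' clause is vacuous. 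Combining the two cases gives precisely the stated criterion.

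For the count I would first pin down $\mathrm{Aut}(\mathfrak{g})$. Both $\mathfrak{z}=\mathfrak{C}(\mathfrak{g})$ and $\mathfrak{h}=[\mathfrak{g},\mathfrak{g}]$ are characteristic, so every automorphism is block-diagonal: $X+tE_4\mapsto\psi(X)+\lambda tE_4$ with $\lambda\ne0$ and $\psi$ an automorphism of $\mathfrak{h}$. Since automorphisms preserve $\kappa$ and the bracket on $\mathfrak{h}$ is the Lorentzian vector product associated with $\kappa$, one verifies $\mathrm{Aut}(\mathfrak{g}_{3,6})=SO(2,1)$ (orthogonal and orientation-preserving); thus $\mathrm{Aut}(\mathfrak{g})$ acts as the full $SO(2,1)$ on $\mathfrak{h}$ and by independent nonzero scaling on $\mathfrak{z}$. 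The condition $\mathfrak{z}\subset\mathfrak{q}$ is automorphism-invariant, so I would count orbits in each of the two cases separately, using that lines in $\mathbb{R}^{2,1}$ fall into the three $SO(2,1)$-types \emph{spacelike}, \emph{timelike}, \emph{null} (each a single orbit: the relevant stabilizers contain reflections of determinant $-1$, so passing from $O(2,1)$ to $SO(2,1)$ neither merges nor splits them).

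In the case $\mathfrak{z}\subset\mathfrak{q}$, the plane $\mathfrak{p}$ is determined up to automorphism by the $\kappa$-type of its orthogonal line $\mathfrak{p}^{\perp}$; the $\kappa$-degenerate planes (null $\mathfrak{p}^{\perp}$) are exactly the two-dimensional subalgebras of $\mathfrak{sl}(2,\mathbb{R})$ and are excluded by the criterion, leaving the Lorentzian and the definite planes, i.e. two classes. In the case $\mathfrak{z}\cap\mathfrak{q}=\{0\}$, identifying $0\ne\phi\in\mathfrak{h}^{\ast}$ with a vector via $\kappa$, one computes that $\xi$ sends the graph of $\phi$ to the graph of $\lambda\,\phi\circ\psi^{-1}$, so the orbit depends only on the $\kappa$-type of the line $\langle\phi\rangle$, yielding spacelike, timelike, and null representatives, i.e. three classes. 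Hence there are exactly $2+3=5$ equivalence classes. The main obstacle is the Lorentzian bookkeeping: identifying the two-dimensional subalgebras of $\mathfrak{sl}(2,\mathbb{R})$ precisely as the Killing-degenerate planes, and checking that the automorphism group is $SO(2,1)$ rather than all of $O(2,1)$ while confirming this does not split the three line-types; the rest rests on Proposition \ref{gen} and the perfectness of $\mathfrak{sl}(2,\mathbb{R})$.
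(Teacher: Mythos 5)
Your proof is correct, and it reaches both the generation criterion and the count of five classes by a genuinely different route from the paper. The paper splits into the same two cases ($\mathfrak{g}_1\subset\mathfrak{q}$ or not), but in the first case it quotes the classification of two-dimensional generating subspaces of $\mathfrak{sl}(2,\mathbb{R})$ from Agrachev--Barilari (two classes), and in the second it reduces via Remark~\ref{rem1}, Propositions~\ref{ideal} and~\ref{afterrem} to the four equivalence classes of two-dimensional generating subspaces of the \emph{four}-dimensional algebra established in \cite{BerZub21}, writes out the structure constants of the four resulting $\mathfrak{q}_{0,i}=\mathfrak{p}_{0,i}\oplus[\mathfrak{p}_{0,i},\mathfrak{p}_{0,i}]$, and then shows by hand that $\mathfrak{q}_{0,1}\sim\mathfrak{q}_{0,2}$ while $\mathfrak{q}_{0,2},\mathfrak{q}_{0,3},\mathfrak{q}_{0,4}$ are pairwise inequivalent (using the normalizer $\langle e_2\rangle$ and the spectrum of $\ad(e_2)$). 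You instead work intrinsically with the Killing form: the graph description $\{X+\phi(X)E_4\}$ together with perfectness of $\mathfrak{sl}(2,\mathbb{R})$ gives the criterion cleanly, and the count $2+3=5$ falls out of the orbit structure of nondegenerate planes and of lines in $\mathbb{R}^{2,1}$ under $\mathrm{Aut}(\mathfrak{g}_{3,6})=SO(2,1)$. Your version is self-contained (no appeal to \cite{AB} or to the two-dimensional classification) and exhibits the five classes by a uniform Lorentzian invariant; its cost is the need to pin down $\mathrm{Aut}(\mathfrak{sl}(2,\mathbb{R}))$ exactly and to check that restricting from $O(2,1)$ to $SO(2,1)$ does not split the line- and plane-types, which you do correctly (a reflection of determinant $-1$ stabilizes any given line or nondegenerate plane). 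The paper's version, by contrast, produces explicit Lemma~\ref{base}-adapted bases and structure constants for each class, which are what Theorems~\ref{mmm5} and~\ref{mmm6} subsequently consume; if you wanted your argument to feed into those results you would still need to extract such bases from your representatives.
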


\begin{proof}
The first statement follows from Proposition \ref{gen}. According to \cite{Patera}, all two-dimensional Lie subalgebras of the Lie algebra $\mathfrak{g}_{3,6}$ are equivalent to $\langle E_1-E_3, E_2 \rangle$.

Let $\mathfrak{g}_1\subset\mathfrak{q}$. By Lemma \ref{base}, $\mathfrak{q}_1=\mathfrak{g}_1$ and since $\mathfrak{p}\cap\mathfrak{q}_1=\{0\}$ for
the two-dimensional space $\mathfrak{p}:=\mathfrak{g}_3\cap\mathfrak{q}$ then 
$\mathfrak{p}$ generates $\mathfrak{g}_3$. In \cite{AB} was proved that there exist two equivalence classes of such spaces
$\mathfrak{p}$ so the similar statement holds for three-dimensional subspaces $\mathfrak{q}$,
generating the Lie algebra $\mathfrak{g}$ and containing $\mathfrak{g}_1$.

Now assume that $\mathfrak{g}_1\not\subset\mathfrak{q}$, i.e. $\mathfrak{q}$  contains no one-dimensional ideal of $\mathfrak{g}$.
It follows from Remark \ref{rem1} and Proposition \ref{ideal} that there exists a basis $(e_1,e_2,e_3:=[e_1,e_2])$ in $\mathfrak{q}$ satisfying Lemma \ref{base}, in particular, a two-dimensional subspace $\mathfrak{p}_0$ with the basis $(e_1,e_2)$ generates the Lie algebra $\mathfrak{g}$. 
In \cite{BerZub21} was proved that there exist four equivalence classes of such spaces $\mathfrak{p}_0$,
and subspaces $\langle E_1,E_2+E_4\rangle$, $\langle E_3,E_1+E_4\rangle$, $\langle E_1,E_3+E_4\rangle$, 
$\langle E_4+(E_2-E_3)/2,E_2+E_3\rangle$ belong to the first, the second, the third, the fourth equivalence classes, respectively.
It's not hard to compute, taking into account Proposition \ref{afterrem}, that for $\mathfrak{p}_{0,i}$, $i=1,\dots,4$, of $i$--th equivalence class, the three-dimensional subspace $\mathfrak{q}_{0,i}=\mathfrak{p}_{0,i}\oplus [\mathfrak{p}_{0,i},\mathfrak{p}_{0,i}]$, $i=1,\dots,4$,
has a basis  $(e_1,e_2,e_3)$ from Lemma \ref{base} such that $[e_1,e_2]=e_3$, and for $e_4=[e_1,e_3]$,
$$[e_2,e_3]=-e_1,\quad [e_2,e_4]=0,\quad [e_1,e_4]=e_3,\quad [e_3,e_4]=e_1;$$
$$[e_2,e_3]=-e_1,\quad [e_2,e_4]=0,\quad [e_1,e_4]=-e_3,\quad [e_3,e_4]=-e_1;$$
$$[e_2,e_3]=e_1,\quad [e_2,e_4]=0,\quad [e_1,e_4]=e_3,\quad [e_3,e_4]=-e_1;$$
$$[e_2,e_3]=e_2,\quad [e_2,e_4]=e_3,\quad [e_1,e_4]=0,\quad [e_3,e_4]=e_4$$
(in ascending order of $i$). With mutual permutation of vectors $e_1$ and $e_3$
the equality $[e_1,e_2]=e_3$ and the last three (respectively, the first of) relations in their first quadruple will go into the second quadruple (respectively, to $[e_1,e_2]=e_3$). Therefore, the subspaces $\mathfrak{q}_{0,1}$ and $\mathfrak{q}_{0,2}$ are equivalent.

Let us show that the subspaces $\mathfrak{q}_{0,k}$, $k=2,3,4$, are pairwise not equivalent.

Let $\xi$ be an automorphism of the Lie algebra $\mathfrak{g}_{3,6}\oplus \mathfrak{g}_1$ such that
$\xi(\mathfrak{q}_{0,k})=\mathfrak{q}_{0,j},$ where $2\leq k<j\leq 4.$
Since $\langle e_2\rangle$ are the only one-dimensional normalizers of $\mathfrak{q}_{0,k}$ and $\mathfrak{q}_{0,j}$, then  $\xi(\langle e_2\rangle)=\langle e_2\rangle$. Therefore  $j\neq 4$ because $\langle e_2\rangle\subset [\langle e_2\rangle,\mathfrak{q}_{0,4}]$, and we have $[\langle e_2\rangle,\mathfrak{q}_{0,k}]= \langle e_1, e_3\rangle=[\langle e_2\rangle,\langle e_1, e_3\rangle]$ for $k=2,3$. Eigenvalues of the
operator $\ad(e_2)$ on $\langle e_1, e_3\rangle$ are equal to $\pm 1$ for $\mathfrak{q}_{0,2}$ and  $\pm {\bf i}$ for $\mathfrak{q}_{0,3}$.
Therefore, these spectra are not real-similarly, $\mathfrak{q}_{0,2}$ and $\mathfrak{q}_{0,3}$ are not equivalent.
\end{proof}

\begin{proposition}
\label{equiv0}
Any two three-dimensional subspaces of any four-dimensional real Lie algebra $\mathfrak{g}\neq\mathfrak{sl}(2,\mathbb{R})\oplus\mathfrak{g}_1$, generating $\mathfrak{g}$ and containing the same one-dimensional ideal of this algebra, are equivalent.
\end{proposition}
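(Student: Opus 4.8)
The plan is to show that the fixed one-dimensional ideal is intrinsically attached to every such subspace, to pass to the three-dimensional quotient, and to reduce the claim to the known classification of generating two-planes in three-dimensional Lie algebras. First I would record the following: if a one-dimensional ideal $\mathfrak{L}=\langle z\rangle$ is contained in a generating three-dimensional subspace $\mathfrak{q}$, then $\mathfrak{L}=\mathfrak{q}\cap\mathfrak{N}(\mathfrak{q})$. Indeed, since $\mathfrak{L}$ is an ideal we have $[\mathfrak{q},\mathfrak{L}]\subseteq\mathfrak{L}\subseteq\mathfrak{q}$, so $\mathfrak{L}\subseteq\mathfrak{N}(\mathfrak{q})$ and hence $\mathfrak{L}\subseteq\mathfrak{q}\cap\mathfrak{N}(\mathfrak{q})$; by Lemma \ref{base} this intersection is one-dimensional, so the inclusion is an equality. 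Consequently, any automorphism $\xi$ of $\mathfrak{g}$ with $\xi(\mathfrak{q})=\mathfrak{q}'$ automatically carries $\mathfrak{L}=\mathfrak{q}\cap\mathfrak{N}(\mathfrak{q})$ onto $\mathfrak{q}'\cap\mathfrak{N}(\mathfrak{q}')=\mathfrak{L}$, so it suffices to produce an automorphism preserving $\mathfrak{L}$, and we may work inside the subgroup $\mathrm{Aut}(\mathfrak{g},\mathfrak{L})$ of automorphisms fixing $\mathfrak{L}$.

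Next I would reduce to the three-dimensional quotient $\mathfrak{h}=\mathfrak{g}/\mathfrak{L}$. Writing $\pi\colon\mathfrak{g}\to\mathfrak{h}$ for the projection, a three-dimensional subspace $\mathfrak{q}\supseteq\mathfrak{L}$ corresponds bijectively to the two-dimensional subspace $\bar{\mathfrak{q}}=\pi(\mathfrak{q})$, with inverse $\mathfrak{q}=\pi^{-1}(\bar{\mathfrak{q}})$. Moreover $\mathfrak{q}$ generates $\mathfrak{g}$ if and only if $\bar{\mathfrak{q}}$ generates $\mathfrak{h}$: projecting gives the forward direction, while if $\bar{\mathfrak{q}}$ generates $\mathfrak{h}$ then the subalgebra generated by $\mathfrak{q}$ surjects onto $\mathfrak{h}$ and contains $\mathfrak{L}$, hence equals $\mathfrak{g}$. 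Each $\xi\in\mathrm{Aut}(\mathfrak{g},\mathfrak{L})$ induces $\bar{\xi}\in\mathrm{Aut}(\mathfrak{h})$ compatibly with this correspondence, so the statement becomes: the image of $\mathrm{Aut}(\mathfrak{g},\mathfrak{L})$ in $\mathrm{Aut}(\mathfrak{h})$ acts transitively on the two-planes of $\mathfrak{h}$ that are not subalgebras (by Proposition \ref{gen} applied one dimension lower, these are exactly the generating two-planes).

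I would then analyze the possible quotients. If $\mathfrak{h}$ is abelian no generating two-plane exists and the statement is vacuous; otherwise a non-subalgebra two-plane of a three-dimensional Lie algebra exists and, up to $\mathrm{Aut}(\mathfrak{h})$, forms a single class for every $\mathfrak{h}$ except $\mathfrak{h}=\mathfrak{sl}(2,\mathbb{R})$, where the Lorentzian Killing form yields exactly two classes according as the normal line of the plane is spacelike or timelike. The decisive observation is that $\mathfrak{h}=\mathfrak{sl}(2,\mathbb{R})$ can arise only for the excluded algebra: since $\mathfrak{sl}(2,\mathbb{R})$ is semisimple and perfect, the extension $0\to\mathfrak{L}\to\mathfrak{g}\to\mathfrak{sl}(2,\mathbb{R})\to0$ splits and the only one-dimensional representation of $\mathfrak{sl}(2,\mathbb{R})$ is trivial, forcing $\mathfrak{g}=\mathfrak{sl}(2,\mathbb{R})\oplus\mathfrak{g}_1$. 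For every remaining $\mathfrak{h}$ I would realize the required equivalences as restrictions of automorphisms of $\mathfrak{g}$ fixing $\mathfrak{L}$: using the adapted basis $(e_1,e_2,e_3,e_4)$ of Lemma \ref{base} with $e_2$ spanning $\mathfrak{L}$, one normalizes the structure constants of Proposition \ref{const} to a form depending only on $\mathfrak{g}$ and $\mathfrak{L}$, so that sending one adapted basis to another is genuinely an automorphism. The cases already settled in Propositions \ref{eq48}, \ref{solv2}, and \ref{solv3,6} would be quoted directly.

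The main obstacle I anticipate is precisely this final lifting step. Because the image of $\mathrm{Aut}(\mathfrak{g},\mathfrak{L})$ in $\mathrm{Aut}(\mathfrak{h})$ may be a proper subgroup, transitivity in $\mathrm{Aut}(\mathfrak{h})$ does not formally descend, and its orbits could in principle be strictly finer; one must therefore verify, family by family in Table \ref{Tab:list}, that the equivalences between generating two-planes of $\mathfrak{h}$ are actually induced by automorphisms of $\mathfrak{g}$ preserving $\mathfrak{L}$. Confirming that $\mathfrak{sl}(2,\mathbb{R})$ is the \emph{only} obstruction, and that the single class never splits for the remaining algebras, is the crux of the argument.
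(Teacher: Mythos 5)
Your reduction to the quotient is correct as far as it goes: the identification $\mathfrak{L}=\mathfrak{q}\cap\mathfrak{N}(\mathfrak{q})$ is right (it is Lemma \ref{base} combined with $[\mathfrak{L},\mathfrak{g}]\subseteq\mathfrak{L}\subseteq\mathfrak{q}$), the bijection between generating three-planes containing $\mathfrak{L}$ and generating two-planes of $\mathfrak{h}=\mathfrak{g}/\mathfrak{L}$ is sound, and the Levi/Whitehead argument showing that $\mathfrak{h}\cong\mathfrak{sl}(2,\mathbb{R})$ forces $\mathfrak{g}\cong\mathfrak{sl}(2,\mathbb{R})\oplus\mathfrak{g}_1$ is a genuinely nice conceptual explanation of why exactly that algebra is excluded, which the paper does not offer. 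But the proposal stops where the actual proof begins. You reduce the proposition to the assertion that the image of $\mathrm{Aut}(\mathfrak{g},\mathfrak{L})$ in $\mathrm{Aut}(\mathfrak{h})$ still acts transitively on generating two-planes, you correctly observe that this does not follow formally from transitivity of $\mathrm{Aut}(\mathfrak{h})$, and you then leave it as something to be "verified family by family." That verification is the entire content of the proposition; without it nothing is proved. Your suggested mechanism for closing it --- normalize the structure constants of Proposition \ref{const} to a form depending only on $\mathfrak{g}$ and $\mathfrak{L}$ so that sending one adapted basis to another is genuinely an automorphism --- is a restatement of the goal rather than an argument: whether such a normalization exists is precisely what must be established, by computation, in each case.

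For comparison, the paper never passes to the quotient. It splits on whether $\mathfrak{L}$ is central. When $\mathfrak{L}=\langle e_2\rangle$ is noncentral it builds an adapted basis with $[e_1,e_2]=0$, $[e_2,e_3]=e_2$, $[e_1,e_3]=e_4$, pins down the remaining brackets via the Jacobi identity (yielding (\ref{fca}), (\ref{fcad})), and then either writes down explicitly the automorphism carrying one adapted basis to the other (when $C_{14}^4=0$) or identifies $\mathfrak{g}$ with $2\mathfrak{g}_{2,1}$ and invokes Proposition \ref{solv2}. When $\mathfrak{L}$ is central it runs through the classification algebra by algebra. So the case analysis you defer is unavoidable, and a complete write-up along your lines would still have to carry out essentially the computations the paper does. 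One shortcut worth recording if you pursue your route: inner automorphisms of $\mathfrak{g}$ preserve every ideal and surject onto the inner automorphisms of $\mathfrak{h}$, so whenever $\mathrm{Inn}(\mathfrak{h})$ is already transitive on generating two-planes (e.g.\ for the Heisenberg quotient) the lifting is free --- but this, too, must be checked case by case, so the gap remains genuine as the proposal stands.
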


\begin{proof}
Assume that different three-dimensional subspaces $\mathfrak{q},\tilde{\mathfrak{q}}\subset\mathfrak{g}$ generates a Lie algebra $\mathfrak{g}$ and contain a one-dimensional ideal $\mathfrak{L}$ of this algebra. 
	
Suppose that $\mathfrak{L}\not\subset\mathfrak{C(g)}$, $0\neq e_2\in\mathfrak{L}$. Then there exist linearly independent vectors $e_1,e_3$
from some two-dimensional subspace $\mathfrak{p}\subset\mathfrak{q}$, where
$\mathfrak{p}\cap\mathfrak{L}=\{0\}$, for which  the first two equalities indicated below are satisfied:
\begin{equation}
\label{d}
[e_1,e_2]=0,\,\,[e_2,e_3]=e_2,\,\,[e_1,e_3]=e_4,\,\,[e_2,e_4]=0.
\end{equation}
We define $e_4$ by the third equality in (\ref{d}). It follows from Lemma \ref{base} that $e_4\notin\mathfrak{q}$, therefore $(e_1,e_2,e_3,e_4)$ is a basis for $\mathfrak{g}$. The fourth relation in (\ref{d}) follows from the Jacobi identity for vectors $e_1,e_2,e_3$ and the previous equalities:
$$[e_2,e_4]=[e_2,[e_1,e_3]]=[[e_2,e_1],e_3]+[e_1,[e_2,e_3]]=[e_1,e_2]=0.$$

Equalities (\ref{d}) and the Jacobi identities for  vectors $e_1,e_2,e_4$ and $e_2,e_3,e_4$ imply 
$$C^3_{14}e_2=[e_2,[e_1,e_4]]= [[e_2,e_1],e_4] + [e_1,[e_2,e_4]]=0\,\,\Rightarrow\,\,C^3_{14}=0;$$
$$C^3_{34}e_2=[e_2,[e_3,e_4]]= [[e_2,e_3],e_4] + [e_3,[e_2,e_4]]=[e_2,e_4]=0\,\,\Rightarrow\,\,C^3_{34}=0.$$

Writing the Jacobi identity $[e_3,[e_1,e_4]]=[e_1,[e_3,e_4]]$  for vectors $e_1,e_3,e_4$,
taking into account (\ref{d}) and the equalities $C^3_{14}=0$, $C^3_{34}=0$, we get
$$-C^1_{14}e_4-C^2_{14}e_2+C^4_{14}(C^1_{34}e_1+C^2_{34}e_2+C^4_{34}e_4)=[e_3,[e_1,e_4]]=$$
$$[e_1,[e_3,e_4]]=C^4_{34}(C^1_{14}e_1+C^2_{14}e_2+C^4_{14}e_4).$$ 
Then
\begin{equation}
\label{fca}
[e_1,e_4]=C^2_{14}e_2+C^4_{14}e_4,\quad [e_3,e_4]=C^1_{34}e_1+C^2_{34}e_2+C^4_{34}e_4,
\end{equation}
moreover,
\begin{equation}
\label{fcad}
C^1_{34}C^4_{14}=0,\quad C^4_{14}C^2_{34}=C^2_{14}(C^4_{34}+1).
\end{equation}

Set $C_{14}^4=0$. As we proved, there exists a basis $(\tilde{e}_1,\tilde{e}_2,\tilde{e}_3,\tilde{e}_4)$ of the Lie algebra
$\mathfrak{g}$ such that the vectors  $\tilde{e}_1,e_2,\tilde{e}_3$  belong to  $\tilde{\mathfrak{q}}$ and
\begin{equation}
\label{d1}
[\tilde{e}_1,e_2]=0,\,\,[e_2,\tilde{e}_3]=e_2,\,\,[\tilde{e}_1,\tilde{e}_3]=\tilde{e}_4,\,\,[e_2,\tilde{e}_4]=0.
\end{equation}
Let us denote by $a_i$, $b_i$, $i=1,\dots,4$, respectively, coordinates of the vectors $\tilde{e}_1$ and $\tilde{e}_3$ in the basis $(e_1,e_2,e_3,e_4)$.
Since $\tilde{\mathfrak{q}}\neq\mathfrak{q}$ and each vector, collinear to a basis vector, 
can be added to another basis vector, we can assume that $a_4=a_2=b_2=0$, $b_4\neq 0$. 
Then due to (\ref{d}) and (\ref{d1}) we have
$$0=[\tilde{e}_1,e_2]=a_1[e_1,e_2]-a_3[e_2,e_3]=-a_3e_2\quad\Rightarrow\quad a_3=0,$$
$$e_2=[e_2,\tilde{e}_3]=-b_1[e_1,e_2]+b_3[e_2,e_3]+b_4[e_2,e_4]=b_3e_2\quad\Rightarrow\quad b_3=1.$$
Adding, if it is necessary, the vector $(b_4-b_1)\tilde{e}_1$ to the vector $\tilde{e}_3$, we can assume that $\tilde{e}_1=e_1$ and
$\tilde{e}_3=b_4e_1+e_3+b_4e_4$. Grating (\ref{d}), (\ref{fca}), and $C_{14}^4=0$, we find consequtively 
$$\tilde{e}_4=[e_1,b_4e_1+e_3+b_4e_4]=e_4+b_4[e_1,e_4]=e_4+b_4C_{14}^2e_2,$$
$$[\tilde{e}_1,\tilde{e}_4]=[e_1,e_4+b_4C_{14}^2e_2]=[e_1,e_4]+b_4C_{14}^2[e_1,e_2]=[e_1,e_4],$$
$$[\tilde{e}_3,\tilde{e}_4]=[b_4e_1+e_3+b_4e_4,e_4+b_4C_{14}^2e_2]=b_4[e_1,e_4]+[e_3,e_4]-b_4C_{14}^2[e_2,e_3]=$$
$$b_4C_{14}^2e_2+[e_3,e_4]-b_4C_{14}^2[e_2,e_3]=[e_3,e_4],\quad [e_2,e_4^{\ast}]=[e_2,e_4+b_4C_{14}^2e_2]=[e_2,e_4]=0.$$
Consequently, a linear operator $\xi$ of the Lie algebra $\mathfrak{g}$ such that $\xi(e_2)=e_2$, $\xi(e_i)=\tilde{e}_i$,  $i=1,3,4$, is an automorphism of the Lie algebra $\mathfrak{g}$ and the equality $\xi(\mathfrak{q})=\tilde{\mathfrak{q}}$ holds, i.e. the subspaces $\mathfrak{q}$ and $\tilde{\mathfrak{q}}$ are equivalent.

Assume that $C_{14}^4\neq 0$.
Using  (\ref{d}), (\ref{fca}), (\ref{fcad}), it is easy to check  that  $\mathfrak{g}$ is isomorphic to $2\mathfrak{g}_{2,1}$;
the required isomorphism $\xi:\mathfrak{g}\rightarrow 2\mathfrak{g}_{2,1}$ is given by formulas
$$\xi(e_1)=\left(-C_{14}^2/C_{14}^4\right)E_1-C_{14}^4E_4,\quad \xi(e_2)=E_1,$$
$$\xi(e_3)=E_2+E_3-C_{34}^4E_4,\quad\xi(e_4)=\left(-C_{14}^2/C_{14}^4\right)E_1+C_{14}^4E_3.$$
Then on the ground of Proposition \ref{solv2} the subspaces $\mathfrak{q}$ and $\tilde{\mathfrak{q}}$ are equivalent.

Assume that $\mathfrak{L}\subset\mathfrak{C(g)}$. At first, consider all indecomposable four-dimensional Lie algebras containing a nontrivial (one-dimensional) central ideal: $\mathfrak{g}_{4,1}$,  $\mathfrak{g}_{4,3}$, $\mathfrak{g}_{4,8}^{-1}$,  $\mathfrak{g}_{4,9}^0$. 

Set $\mathfrak{g}=\mathfrak{g}_{4,1}$ or $\mathfrak{g}=\mathfrak{g}_{4,3}$. In consequence of Proposition \ref{por}, any three-dimensional subspace $\mathfrak{q}$, generating $\mathfrak{g}$ and containing $\mathfrak{C(g)}$, has a basis $(e_1,e_2,e_3)$, where
$e_1$ does not belong to the three-dimensional ideal $\mathfrak{I}=\langle E_1,E_2,E_3\rangle$, $e_2=E_1$ in the case $\mathfrak{g}_{4,1}$ and $e_2=E_2$ in the case $\mathfrak{g}_{4,3}$, $e_3\in\mathfrak{I}$ and $e_3$ does not belong to any two-dimensional ideal
$\mathfrak{J}\subset\mathfrak{I}$, and the components of the vector $e_1$ at $E_4$ and  of the vector $e_3$ at $E_3$ in the basis $(E_1,E_2,E_3,E_4)$ are equal to $1$. 
Then $e_4:=[e_1,e_3]\notin\mathfrak{q}$, $[e_1,e_4]=e_2$ in the case $\mathfrak{g}_{4,1}$ and $[e_1,e_4]=-e_2-e_4$ in the case
$\mathfrak{g}_{4,3}$, while all other Lie brackets for vectors $e_1,e_2,e_3,e_4$ are zero.
Consequently, any two three-dimensional subspaces, generating $\mathfrak{g}$ and containing $\mathfrak{C(g)}$, are equivalent.

Assume that $\mathfrak{g}=\mathfrak{g}_{4,8}^{-1}$ or $\mathfrak{g}=\mathfrak{g}_{4,9}^{0}$. In consequence of Proposition \ref{por}, any three-dimensional subspace  $\mathfrak{q}$, generating $\mathfrak{g}$ and containing $\mathfrak{C(g)}$,  has a basis $(e_1,e_2,e_3)$, where
$e_1\notin\mathfrak{g}^{\prime}$, 
$e_2\in\mathfrak{C(g)}$, $e_3\in\mathfrak{g}^{\prime}$ and in the case $\mathfrak{g}_{4,8}^{-1},$ the vector $e_3$ does not belong to 
two-dimensional ideals $\mathfrak{J}_1=\langle E_1,E_2\rangle$ and $\mathfrak{J}_2=\langle E_1,E_3\rangle$. 
Since one can add to a vector of the basis any vector, collinear to another vector of the basis, we can assume without loss of generality that
$e_1=a_1E_1+a_2E_2+a_3E_3+E_4$,
$$\mathfrak{g}_{4,8}^{-1}:\quad e_2=a_4a_5E_1,\,\,e_3=(a_3a_4+a_2a_5)E_1+a_4E_2+a_5E_3,\quad a_4a_5\neq 0;$$
$$\mathfrak{g}_{4,9}^{0}:\quad e_2=(a_4^2+a_5^2)E_1,\,\,e_3=(a_3a_5-a_2a_4)E_1+a_4E_2-a_5E_3,\quad a_4^2+a_5^2\neq 0.$$
Then $e_4:=[e_1,e_3]\notin\mathfrak{q}$. It is easy to check that $[e_1,e_4]=e_3$, $[e_3,e_4]=2e_2$ in the case $\mathfrak{g}_{4,8}^{-1}$ and
$[e_1,e_4]=-e_3$, $[e_3,e_4]=e_2$ in the case $\mathfrak{g}_{4,9}^{0}$, 
while all other Lie brackets for vectors $e_1,e_2,e_3,e_4$ are zero.
Consequently, any two three-dimensional subspaces, generating  $\mathfrak{g}$ and containing $\mathfrak{C(g)}$, are equivalent.

Now assume that $\mathfrak{g}=\mathfrak{g}_3\oplus\mathfrak{g}_1$, $\mathfrak{g}_3\neq\mathfrak{g}_{3,1}$, $\mathfrak{g}_3\neq\mathfrak{sl}(2,\mathbb{R})$. 
By Lemma \ref{base}, $\mathfrak{q}_1=\mathfrak{g}_1$ for any three-dimensional subspace $\mathfrak{q}$, generating $\mathfrak{g}$. Since $\mathfrak{p}\cap\mathfrak{q}_1=\{0\}$ for two-dimensional subspace $\mathfrak{p}:=\mathfrak{g}_3\cap\mathfrak{q},$ then on the ground of Lemma \ref{base},  $\mathfrak{p}$ generates $\mathfrak{g}_3$. In \cite{BerZub21} was proved that in this case $\mathfrak{g}_3\neq\mathfrak{g}_{3,3}$ and any two such  subspaces  $\mathfrak{p}$  are equivalent. Consequently,  any two such three-dimensional subspaces, generating $\mathfrak{g}$ and
containing $\mathfrak{g}_1$, are equivalent.

Assume that $\mathfrak{g}=\mathfrak{g}_{3,1}\oplus\mathfrak{g}_1$. Since ${\rm dim}\,\mathfrak{g}^{\prime}=1$,
${\rm dim}\,\mathfrak{C}(\mathfrak{g})=2$ and $\mathfrak{g}^{\prime}\subset\mathfrak{C}(\mathfrak{g}),$ then any three-dimensional generating subspace $\mathfrak{q}\subset \mathfrak{g}$ does not contain $\mathfrak{g}^{\prime}$ and, due to Lemma \ref{base},
$\mathfrak{q}_1=\mathfrak{C}(\mathfrak{g})\cap\mathfrak{q}$. By virtue of Proposition \ref{por},  the subspace $\mathfrak{q}$ has a basis
$(e_1,e_2,e_3)$, where $e_1$ ($e_3$) belongs to two-dimensional commutative ideal $\mathfrak{J}_1=\langle E_1,E_2\rangle$ (respectively, $\mathfrak{J}_2=\langle E_1,E_3\rangle$), $e_1,e_3\notin\mathfrak{g}^{\prime}$, $e_2\in\mathfrak{q}_1$. Then $e_4:=[e_1,e_3]\notin\mathfrak{q}$,
all other Lie brackets for vectors $e_1,e_2,e_3,e_4$ are zero.
Consequently, any two three-dimensional subspaces, generating  $\mathfrak{g}_{3,1}\oplus\mathfrak{g}_{1}$ are equivalent.

It remains to consider the Lie algebra  $\mathfrak{g}=\mathfrak{g}_{2,1}\oplus 2\mathfrak{g}_{1}$. Since ${\rm dim}\,\mathfrak{g}^{\prime}=1$,
$\mathfrak{C(g)}=2\mathfrak{g}_{1}$ then any three-dimensional subspace  $\mathfrak{q}$ of the Lie algebra $\mathfrak{g},$ generating it, does not contain
$\mathfrak{g}^{\prime}$ and, due to Lemma \ref{base},
$\mathfrak{q}_1=\mathfrak{C}(\mathfrak{g})\cap\mathfrak{q}$. 
Since the subalgebras $\mathfrak{g}_{2,1}$, $\mathfrak{C(g)}$ are two-dimensional and $\mathfrak{g}^{\prime}\subset\mathfrak{g}_{2,1},$ then
subspaces $\mathfrak{q}\cap\mathfrak{g}_{2,1}$ and $\mathfrak{q}\cap\mathfrak{C(g)}$ are one-dimensional. Then there exists a basis  $(e_1,e_2,e_3)$  for $\mathfrak{q}$ such that 
$$e_1\in\mathfrak{g}_{2,1},\,\,e_1\notin\mathfrak{g}^{\prime},\,\,e_2\in\mathfrak{C(g)},\,\,e_3=f_1+f_2,\,\,
0\neq f_1\in\mathfrak{g}^{\prime},\,\,0\neq f_2\in\mathfrak{C(g)},$$ 
moreover, $[e_1,e_3]=e_4=f_1$, $[e_1,e_4]=e_4$, 
all other Lie brackets for vectors  $e_1,e_2,e_3,e_4$ are zero.
Consequently, any three-dimensional subspaces of the Lie algebra 
$\mathfrak{g}_{2,1}\oplus 2\mathfrak{g}_{1},$ generating it, are equivalent.
\end{proof}	

\begin{proposition}
\label{equiv}
Assume that a four-dimensional Lie algebra $\mathfrak{g}$ has three-dimensional generating subspaces. Then 

1. If $\mathfrak{g}$ has an infinite number of two-dimensional ideals then 
any two three-dimensional subspaces, generating the Lie algebra $\mathfrak{g}$, are equivalent.

2. If $\mathfrak{g}$ has a finite number (respectively, zero) of  two-dimensional ideals and $\mathfrak{g}$ is different from the Lie algebras
$2\mathfrak{g}_{2,1}$, $\mathfrak{g}_{3,6}\oplus\mathfrak{g}_{1}$, then
$\mathfrak{g}$ has a finite number $m$, $0\leq m\leq 3$, of pairwise nonequivalent one-dimensional ideals  $\mathfrak{L}_1,\dots,\mathfrak{L}_m$. 
There exist $m+1$ equivalence classes of three-dimensional subspaces, generating the Lie algebra $\mathfrak{g}$;
$\mathfrak{q}\subset\mathfrak{g}$ belongs to the $i$--th equivalence class 
($i=1,\dots,m$), if $\mathfrak{L}_i\subset\mathfrak{q}$; $\mathfrak{q}\subset\mathfrak{g}$ belongs to the $(m+1)$--th equivalence class, if $\mathfrak{q}$ contains no one-dimensional  ideal of the Lie algebra $\mathfrak{g}$.
\end{proposition}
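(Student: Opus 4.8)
The plan is to partition the three-dimensional generating subspaces $\mathfrak{q}\subset\mathfrak{g}$ into two families — those that contain a one-dimensional ideal of $\mathfrak{g}$ (call them \emph{ideal-type}) and those that do not (\emph{free-type}) — and to count the equivalence classes in each family separately. The foundation is the observation that every generating $\mathfrak{q}$ contains \emph{at most one} one-dimensional ideal: if $\mathfrak{L}\subset\mathfrak{q}$ is such an ideal then $[\mathfrak{L},\mathfrak{q}]\subset[\mathfrak{L},\mathfrak{g}]\subset\mathfrak{L}\subset\mathfrak{q}$, so $\mathfrak{L}\subset\mathfrak{q}\cap\mathfrak{N}(\mathfrak{q})=\mathfrak{q}_1$, which is one-dimensional by Proposition \ref{por} and Lemma \ref{base}; hence $\mathfrak{L}=\mathfrak{q}_1$. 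Since any automorphism of $\mathfrak{g}$ sends ideals to ideals and carries $\mathfrak{q}_1$ to the corresponding line of its image, the ideal-type/free-type distinction and the equivalence class of the contained ideal are invariants of the orbit. I will treat both parts in this common framework, invoking the dedicated Propositions \ref{solv2} and \ref{solv3,6} for the excluded algebras $2\mathfrak{g}_{2,1}$ and $\mathfrak{g}_{3,6}\oplus\mathfrak{g}_1=\mathfrak{sl}(2,\mathbb{R})\oplus\mathfrak{g}_1$.

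For the ideal-type subspaces in Part 2, fix a one-dimensional ideal $\mathfrak{L}$ contained in some generating subspace. Because $\mathfrak{g}\neq\mathfrak{sl}(2,\mathbb{R})\oplus\mathfrak{g}_1$, Proposition \ref{equiv0} applies and shows that all generating subspaces containing $\mathfrak{L}$ form a single equivalence class. Two such classes attached to ideals $\mathfrak{L}_i,\mathfrak{L}_j$ coincide exactly when $\mathfrak{L}_i$ and $\mathfrak{L}_j$ are equivalent, by the invariance of $\mathfrak{q}_1$ noted above, so the ideal-type classes are in bijection with the equivalence classes of those one-dimensional ideals that actually occur as some $\mathfrak{q}_1$; I will read off from Table \ref{Tab:list} that their number $m$ satisfies $0\le m\le 3$. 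The delicate point here is \emph{usability}: a one-dimensional ideal need not lie in any generating subspace (for instance the derived line of $\mathfrak{g}_{2,1}\oplus 2\mathfrak{g}_1$), so I must check for each algebra that the listed $\mathfrak{L}_1,\dots,\mathfrak{L}_m$ are precisely the usable ones, making every ideal-type class nonempty and distinct.

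For the free-type subspaces in Part 2, I will use Proposition \ref{ideal}: since the relevant algebras possess a two-dimensional generating subspace (Remark \ref{rem1}), a free-type $\mathfrak{q}$ admits a basis $(e_1,e_2,e_3=[e_1,e_2])$ from Lemma \ref{base}, so $\mathfrak{q}=\mathfrak{p}_0\oplus[\mathfrak{p}_0,\mathfrak{p}_0]$ with $\mathfrak{p}_0=\langle e_1,e_2\rangle$ a two-dimensional generating subspace. By Proposition \ref{afterrem}, equivalent $\mathfrak{p}_0$ yield equivalent $\mathfrak{q}$, which reduces the claim that all free-type $\mathfrak{q}$ form a single class to the statement that the two-dimensional generating subspaces all produce one orbit of $\mathfrak{q}$. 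This is the \textbf{main obstacle}: it is exactly here that $\mathfrak{sl}(2,\mathbb{R})\oplus\mathfrak{g}_1$ breaks down — Proposition \ref{solv3,6} exhibits three inequivalent free-type classes there — which is why that algebra, together with $2\mathfrak{g}_{2,1}$ (which has no two-dimensional generating subspace and so lies outside the reach of Propositions \ref{ideal} and \ref{afterrem}), is excluded and dispatched directly by Propositions \ref{solv3,6} and \ref{solv2}. For the remaining Part 2 algebras I will verify, using the structure constants of Lemma \ref{base}, Proposition \ref{const}, and the classification of two-dimensional generating subspaces from \cite{BerZub21}, that a single orbit results; Proposition \ref{nonstr} is what guarantees the required basis exists and isolates the exceptional configuration leading to $\mathfrak{g}_{4,8}^1$.

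For Part 1, I will first characterize the algebras possessing infinitely many two-dimensional ideals: such an algebra carries a pencil $\{\langle\mathfrak{L},v\rangle\}$ of ideals through a fixed line, arising from a commutative ideal on which the complementary action is scalar. In every such algebra I will show that each generating subspace $\mathfrak{q}$ is forced to be ideal-type — its intersection with the ideal-rich two-dimensional subspace is, by Proposition \ref{por}, a one-dimensional ideal — and that all usable one-dimensional ideals lie in a single equivalence class, even when $\mathfrak{g}$ has several inequivalent one-dimensional ideals in total (as already seen for $\mathfrak{g}_{2,1}\oplus 2\mathfrak{g}_1$, whose derived line is not usable). Proposition \ref{equiv0} then collapses everything to one class, giving $k=1$ and establishing Part 1. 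The bookkeeping of which ideals are usable, together with confirming that no free-type class survives, is the recurring technical burden across both parts.
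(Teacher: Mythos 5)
Your overall strategy coincides with the paper's: split the generating subspaces into those containing a one-dimensional ideal and those containing none, use Proposition \ref{equiv0} for the first family, use Proposition \ref{ideal}, Proposition \ref{afterrem}, Remark \ref{rem1} and the single orbit of two-dimensional generating subspaces from \cite{BerZub21} for the second, and observe via Lemma \ref{base} that a generating $\mathfrak{q}$ contains at most one one-dimensional ideal, so the classes attached to inequivalent ideals stay distinct. Your Part 2, including the ``usability'' check (the paper handles it by citing Table 2 of \cite{Patera}), would go through essentially as in the paper, modulo the per-algebra bookkeeping you defer.

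Your Part 1, however, contains a false universal claim. The Lie algebra $\mathfrak{g}^1_{4,8}$ has infinitely many two-dimensional ideals (every plane $\langle E_1,aE_2+bE_3\rangle$ is one), so it falls under Part 1; yet, as Proposition \ref{eq48} records, its unique one-dimensional ideal $\langle E_1\rangle$ lies in no three-dimensional generating subspace, so \emph{every} generating subspace of $\mathfrak{g}^1_{4,8}$ is free-type. Your mechanism --- ``the intersection of $\mathfrak{q}$ with the ideal-rich two-dimensional subspace is, by Proposition \ref{por}, a one-dimensional ideal'' --- fails here: the pencil of two-dimensional ideals passes through the common line $\langle E_1\rangle$, there is no two-dimensional ideal on which the adjoint action is scalar, and the one-dimensional intersection $\mathfrak{q}\cap\langle E_1,v\rangle$ is in general not an ideal. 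Hence both ``each generating subspace is ideal-type'' and ``Proposition \ref{equiv0} collapses everything to one class'' break down for $\mathfrak{g}^1_{4,8}$; and since this algebra has no two-dimensional generating subspace (Remark \ref{rem1}), your free-type machinery (Propositions \ref{ideal} and \ref{afterrem}) is not available as a fallback either. The case needs a separate argument, which is exactly what the paper supplies via Proposition \ref{eq48}; once you insert that, the remainder of your plan reproduces the paper's proof.
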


\begin{proof}
1. It follows from Table \ref{Tab:list} and Corollary \ref{nonex} that $\mathfrak{g}$ is one of the Lie algebras $\mathfrak{g}_{2,1}\oplus 2\mathfrak{g}_{1}$, $\mathfrak{g}_{3,1}\oplus\mathfrak{g}_{1}$, $\mathfrak{g}_{3,3}\oplus\mathfrak{g}_{1}$, $\mathfrak{g}_{4,2}^1$, $\mathfrak{g}_{4,8}^1$, $\mathfrak{g}_{4,5}^{\alpha,1}$, $-1\leq \alpha<1$, $\alpha\neq 0$, $\mathfrak{g}_{4,5}^{\alpha,\alpha}$, $-1<\alpha<1$, $\alpha\neq 0$.

Lie algebras $\mathfrak{g}_{2,1}\oplus 2\mathfrak{g}_{1}$, $\mathfrak{g}_{3,1}\oplus\mathfrak{g}_{1}$ 
were considered in the proof of Proposition \ref{equiv0}.

Set $\mathfrak{g}=\mathfrak{g}_{3,3}\oplus\mathfrak{g}_{1}$. Every two-dimensional subspace in $\mathfrak{g}_{3,3}$ is a Lie algebra.
Therefore a three-dimensional subspace $\mathfrak{q}\subset \mathfrak{g}$ generates $\mathfrak{g}$ if and only if $\mathfrak{g}_{3,3}$ is a projection
of $\mathfrak{q}$ to $\mathfrak{g}_{3,3}$ along $\mathfrak{g}_1$ and $\dim(\mathfrak{q}\cap \mathfrak{g}')=1.$ 
Then any two three-dimensional subspaces, generating the Lie algebra $\mathfrak {g}$, are equivalent.

Set $\mathfrak{g}=\mathfrak{g}_{4,2}^{1}$. It is easy to see that every two-dimensional subspace  $\mathfrak{J}\subset\mathfrak{g}^{\prime}$, containing the vector $E_2$, is an ideal of the Lie algebra $\mathfrak{g}$. Then by Proposition \ref{por} every three-dimensional subspace $\mathfrak{q}$ of $\mathfrak{g},$ generating it, does not contain $E_2$ and has a basis $(e_1,e_2,e_3)$, where $e_1\notin\mathfrak{g}^{\prime}$ and the component of the vector  $e_1$ at $E_4$ is equal to $1$, $e_2\in\langle E_1,E_2\rangle$, $e_3\in\langle E_2,E_3\rangle$. 
It is easy to check that
$$[e_1,e_2]=-e_2,\quad  e_4:=[e_1,e_3]\notin\mathfrak{q},\quad [e_1,e_4]=-e_3-2e_4,$$
all other Lie brackets for vectors $e_1,e_2,e_3,e_4$ are zero.
Consequently, any three-dimensional subspace, generating $\mathfrak{g}_{4,2}^{1},$ contains a one-dimensional ideal of this Lie algebra, and any two such subspaces are equivalent.

Set $\mathfrak{g}=\mathfrak{g}_{4,5}^{\alpha,1}$ or $\mathfrak{g}=\mathfrak{g}_{4,5}^{\alpha,\alpha}$, $-1\leq \alpha<1$, $\alpha\neq 0$. 
It is easy to see that the subspace $\mathfrak{I}:=\langle E_1,E_2\rangle$ in the case $\mathfrak{g}_{4,5}^{\alpha,1}$
($\mathfrak{I}:=\langle E_2,E_3\rangle$ in the case  $\mathfrak{g}_{4,5}^{\alpha,\alpha}$) and every two-dimensional subspace 
$\mathfrak{J}\subset\mathfrak{g}^{\prime}$, containing the vector $E_3$ 
 in the case $\mathfrak{g}_{4,5}^{\alpha,1}$ ($E_1$ in the case  $\mathfrak{g}_{4,5}^{\alpha,\alpha}$), are ideals of this Lie algebra.
Then by Proposition \ref{por} every three-dimensional subspace $\mathfrak{q}$ of the Lie algebra $\mathfrak{g},$ generating it, does not contain $E_3$ in the case $\mathfrak{g}_{4,5}^{\alpha,1}$ ($E_1$ in the case  $\mathfrak{g}_{4,5}^{\alpha,\alpha}$)  and has a basis $(e_1,e_2,e_3)$, where $e_1\notin\mathfrak{g}^{\prime}$ and the component of the vector $e_1$ at $E_4$ is equal to $1$, $e_2\in\mathfrak{I}$,  $e_3=g_1+E_3$, $g_1\in\mathfrak{I}$, $g_1\nparallel e_2$.
It is easy to check that 
$$\mathfrak{g}_{4,5}^{\alpha,1}:\quad [e_1,e_2]=-e_2,\quad e_4:=[e_1,e_3]\notin\mathfrak{q},\quad
[e_1,e_4]=-\alpha e_3-(1+\alpha)e_4,$$
$$\mathfrak{g}_{4,5}^{\alpha,\alpha}:\quad [e_1,e_2]=-\alpha e_2,\quad e_4:=[e_1,e_3]\notin\mathfrak{q},\quad 
[e_1,e_4]=-\alpha e_3-(1+\alpha)e_4,$$
all other Lie brackets for vectors $e_1,e_2,e_3,e_4$ are zero.
Consequently, any three-dimensional subspace, generating $\mathfrak{g},$  contains a one-dimensional ideal of this Lie algebra, and any two such subspaces are equivalent.

The statement for the Lie algebra $\mathfrak{g}_{4,8}^{1}$  is proved in Proposition \ref{eq48}.

2. Let $\mathfrak{g}$ be different from the Lie algebras $2\mathfrak{g}_{2,1}$, $\mathfrak{g}_{3,6}\oplus\mathfrak{g}_{1}$ and have a 
finite number (respectively, zero) of  two-dimensional ideals. It follows from Table \ref{Tab:list} that

1) $m=0$ for $\mathfrak{g}=\mathfrak{g}_{4,10}$;

2)  $m=1$ for decomposable Lie algebras $\mathfrak{g}_{3,5}^{\alpha}\oplus\mathfrak{g}_{1}$, $\alpha\geq 0$, $\mathfrak{g}_{3,7}\oplus\mathfrak{g}_{1}$ (the center is $\langle E_4\rangle$), for indecomposable Lie algebras $\mathfrak{g}_{4,1}$, $\mathfrak{g}_{4,4}$, $\mathfrak{g}_{4,6}^{\alpha,\beta}$, $\alpha>0$, $\beta\in\mathbb{R}$, $\mathfrak{g}_{4,7}$, $\mathfrak{g}_{4,8}^{\alpha}$, $-1\leq\alpha<1$, $\mathfrak{g}_{4,9}^{\alpha}$, $\alpha\geq 0$ (one-dimensional ideal is $\langle E_1\rangle$);

3) $m=2$ for the Lie algebras 
a) $\mathfrak{g}_{3,2}\oplus\mathfrak{g}_{1}$ (with one-dimensional ideals $\langle E_1\rangle$ and $\mathfrak{g}_{1}$),
b) $\mathfrak{g}_{3,4}^0\oplus\mathfrak{g}_{1}$ (with central ideal $\mathfrak{g}_{1}$ and noncentral mutually equivalent one-dimensional ideals $\langle E_1+E_2\rangle$ and $\langle E_1-E_2\rangle$ are equivalent), c) $\mathfrak{g}_{4,2}^{\alpha}$, $\alpha\notin\{0,1\}$ and
 $\mathfrak{g}_{4,3}$ (with one-dimensional ideals $\langle E_1\rangle$ and $\langle E_2\rangle$);

4) $m=3$ for the Lie algebras a) $\mathfrak{g}_{3,4}^{\alpha}\oplus\mathfrak{g}_{1}$, $0<\alpha\neq 1$ (with one-dimensional ideals $\langle E_1+E_2\rangle$, $\langle E_1-E_2\rangle$, $\mathfrak{g}_{1}$), b) $\mathfrak{g}_{4,5}^{\alpha,\beta}$, $-1<\alpha<\beta<1$, $\alpha\beta\neq 0$ or 
$\alpha=-1$, $0<\beta<1$ (with one-dimensional ideals $\langle E_1\rangle$, $\langle E_2\rangle$, $\langle E_3\rangle$).

In the case 3), a) the ideals  $\langle E_1\rangle$ and  $\mathfrak{g}_1$ aren't equivalent because
$\langle E_1\rangle\subset\mathfrak{g}'$ and $\mathfrak{g}_1\not\subset\mathfrak{g}'.$ The case 3), b) is similar.
In the case 3), c) the operator $\ad(-E_4):\mathfrak{g}^{\prime}\rightarrow\mathfrak{g}^{\prime}$ has its own subspaces 
$\langle E_1\rangle$ and $\langle E_2\rangle$ with different eigenvalues  $\alpha,$  $1$ for $\mathfrak{g}_{4,2}^{\alpha}$, $\alpha\notin\{0,1\}$ and $1,$ $0$ for $\mathfrak{g}_{4,3}.$ Denoting by $\mathfrak{g}$ any of these Lie algebras, by $\lambda_i$
eigenvalues for $\langle E_i\rangle,$ $i=1,2,$ and $\ad(-E_4),$ we get for every automorphism $\xi$ of the Lie algebra $\mathfrak{g}$:
$$[\xi(E_i),\xi(E_4)]=\xi([E_i,E_4])=\xi(\lambda_iE_i)=\lambda_i(\xi(E_i)).$$
Then $\xi(E_4)=E_4+f,$ $f\in\mathfrak{g}^{\prime}$ and $\xi(\langle E_i\rangle)=\langle E_i\rangle,$ $i=1,2;$ $\langle E_1\rangle$ and 
$\langle E_2\rangle$ are not equivalent.  

A similar argument is applicable to prove the part 4). In the case a), $\mathfrak{g}_1\not\subset\mathfrak{g}^{\prime}$ and the operator
$\ad E_3$ has eigenvectors $E_1+E_2$ and $E_1-E_2$, corresponding respectively to different eigenvalues $\alpha-1$ and $\alpha+1$.
Therefore one-dimensional ideals  $\langle E_1+E_2\rangle$, $\langle E_1-E_2\rangle$, $\mathfrak{g}_1$ are pairwise not equivalent.
In the case b), the operator $\ad(-E_4)$ has eigenvectors $E_1$, $E_2,$ and $E_3$, with corresponding different eigenvalues
$1$, $\beta$ and $\alpha$. Therefore, one-dimensional ideals $\langle E_1\rangle$, $\langle E_2\rangle$, $\langle E_3\rangle$  
are pairwise not equivalent.

According to Table 2 in paper \cite{Patera}, for every one-dimensional ideal $\mathfrak{L}$ of the Lie algebra $\mathfrak{g}$ from p.~2 of Proposition \ref{equiv} there exists a three-dimensional subspace $\mathfrak{q}$, containing $\mathfrak{L}$ and generating $\mathfrak{g}$. 
On the ground of Proposition \ref{equiv0}, any two such subspaces are equivalent. 
Besides, due to Lemma \ref{base}, the subspace $\mathfrak{q}$ cannot contain a one-dimensional ideal of the Lie algebra $\mathfrak{g}$ other than $\mathfrak{L}$.
This immediately implies that three-dimensional subspaces in $\mathfrak{g}$, generating $\mathfrak{g}$ and containing nonequivalent one-dimensional ideals, are not equivalent themselves.

If a three-dimensional subspace $\mathfrak{q}$, generating the Lie algebra $\mathfrak{g}$ from p. 2 of Proposition \ref{equiv}, contains no one-dimensional ideal of this algebra, then by Proposition \ref{ideal} and Remark \ref{rem1} there exists a basis  $(e_1,e_2,e_3:=[e_1,e_2])$ in $\mathfrak{q}$, satisfying Lemma \ref{base}, i.e. two-dimensional  subspace with a basis $(e_1,e_2)$ generates the Lie algebra $\mathfrak{g}$. In \cite{BerZub21} was proved that any 
two such two-dimensional subspaces are equivalent, moreover, $[e_2,e_3]=0$ for the Lie algebras from p. 2 of Proposition
\ref{equiv} with three-dimensional  commutative ideal;
$C_{23}^1\neq 0$, $C_{23}^2=0$ for the Lie algebras $\mathfrak{g}_{4,7}$, $\mathfrak{g}_{4,9}^{\alpha}$, $\alpha\geq 0$, 
$\mathfrak{g}_{4,8}^{\alpha}$, $-1\leq\alpha<1$, $\alpha\neq 0$;  $C_{23}^1=C_{23}^2=0$ for the Lie algebras $\mathfrak{g}_{4,8}^0$ and $\mathfrak{g}_{4,10}$.
This and Proposition \ref{afterrem} imply that any two such three-dimensional subspaces $\mathfrak{q}$ are equivalent.
\end{proof}

\begin{corollary}
Let $k$ be a number of equivalence classes of three-dimensional subspaces, generating a four-dimensional real Lie algebra $\mathfrak{g}$.
	
1. If  $\mathfrak{g}=\mathfrak{g}_{3,4}^{\alpha}\oplus\mathfrak{g}_1$, $0\leq \alpha\neq 1$, then $k=3$ for $\alpha=0$ and $k=4$ for $\alpha\neq 0.$ 
	
2. If $\mathfrak{g}=\mathfrak{g}_{4,2}^{\alpha}$, $\alpha\neq 0$, then
$k=1$ for  $\alpha=1$ and   $k=3$ in other cases.
	
3. If $\mathfrak{g}=\mathfrak{g}_{4,5}^{\alpha,\beta}$, 
$-1<\alpha\leq\beta\leq 1$, $\alpha\beta\neq 0$, or $\alpha=-1$, $0<\beta\leq 1$, then $k=1$ for $\alpha=\beta\neq 1$ or $\beta=1$, $\alpha\neq 1$ and $k=4$ in other cases.
	
4. If $\mathfrak{g}=\mathfrak{g}_{4,8}^{\alpha}$, $-1\leq\alpha\leq 1$, then $k=1$ for $\alpha=1$ and $k=2$ for $\alpha\neq 1$.
\end{corollary}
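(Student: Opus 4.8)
The plan is to read off each value of $k$ directly from Proposition \ref{equiv}, which reduces the entire count to a single numerical invariant in each case. Recall the dichotomy established there: if $\mathfrak{g}$ possesses infinitely many two-dimensional ideals, then all three-dimensional generating subspaces are equivalent, so $k=1$; otherwise---and none of the four families below coincides with the excluded algebras $2\mathfrak{g}_{2,1}$ or $\mathfrak{g}_{3,6}\oplus\mathfrak{g}_1$---one has $k=m+1$, where $m$ is the number of pairwise nonequivalent one-dimensional ideals of $\mathfrak{g}$. Thus for each parameter value I only need to decide which horn of the dichotomy applies and, in the second horn, to extract $m$ from the enumeration already carried out inside the proof of Proposition \ref{equiv}.

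First I would dispose of the $k=1$ entries by matching them against the list of infinite-ideal algebras exhibited in part~1 of the proof of Proposition \ref{equiv}, namely $\mathfrak{g}_{4,2}^1$, $\mathfrak{g}_{4,8}^1$, $\mathfrak{g}_{4,5}^{\alpha,1}$ with $-1\le\alpha<1$, $\alpha\neq 0$, and $\mathfrak{g}_{4,5}^{\alpha,\alpha}$ with $-1<\alpha<1$, $\alpha\neq 0$. These account for exactly the stated $k=1$ assertions: $\alpha=1$ in part~2, $\alpha=1$ in part~4, and the two subfamilies $\beta=1$, $\alpha\neq 1$ and $\alpha=\beta\neq 1$ in part~3.

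For the remaining entries I would invoke the clause $k=m+1$ and read $m$ off the four-way classification inside the proof of Proposition \ref{equiv}: $m=1$ gives $k=2$ (the algebras $\mathfrak{g}_{4,8}^{\alpha}$, $-1\le\alpha<1$, in part~4); $m=2$ gives $k=3$ (the algebra $\mathfrak{g}_{3,4}^0\oplus\mathfrak{g}_1$ in part~1 and $\mathfrak{g}_{4,2}^{\alpha}$, $\alpha\notin\{0,1\}$, in part~2); and $m=3$ gives $k=4$ (the algebra $\mathfrak{g}_{3,4}^{\alpha}\oplus\mathfrak{g}_1$, $0<\alpha\neq 1$, in part~1 and $\mathfrak{g}_{4,5}^{\alpha,\beta}$ in its remaining parameter ranges in part~3). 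All the substantive content is therefore already inside Proposition \ref{equiv}: the nonequivalence of the $m$ one-dimensional ideals was there obtained by comparing the eigenvalues of a suitable adjoint operator on the derived algebra $\mathfrak{g}'$, while the existence in each class of a generating subspace containing the prescribed ideal rests on Table~2 of \cite{Patera} together with Proposition \ref{equiv0}.

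The main obstacle is not any new algebra but the careful alignment of parameter ranges: one must check that the constraints imposed in the Corollary for each family partition precisely into the infinite-ideal locus (contributing $k=1$) and the finite-ideal locus with the correct $m$, with no parameter value left over or counted twice. This is most delicate for $\mathfrak{g}_{4,5}^{\alpha,\beta}$, where the admissible region $-1<\alpha\le\beta\le 1$, $\alpha\beta\neq 0$, together with the extra stratum $\alpha=-1$, $0<\beta\le 1$, must be split correctly along the walls $\alpha=\beta$ and $\beta=1$; in particular one must verify that the degenerate corner $\alpha=\beta=-1$ lies outside the admissible region and that the endpoint $\alpha=-1$, $\beta=1$ is absorbed into the family $\mathfrak{g}_{4,5}^{\alpha,1}$. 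For $\mathfrak{g}_{4,2}^{\alpha}$ and $\mathfrak{g}_{4,8}^{\alpha}$ the analogous check is only to confirm that the single value $\alpha=1$ is the unique one landing in the infinite-ideal case.
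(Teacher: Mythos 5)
Your proposal is correct and follows exactly the route the paper intends: the corollary is stated as an immediate readout of Proposition \ref{equiv}, with the $k=1$ entries matched to the infinitely-many-two-dimensional-ideals list in part 1 of its proof and the remaining entries obtained as $k=m+1$ from the enumeration of nonequivalent one-dimensional ideals in part 2. The only caveat (present already in the paper's own statement, since the table records $k\in\{0,1,4\}$ for the family $\mathfrak{g}_{4,5}^{\alpha,\beta}$) is the corner $\alpha=\beta=1$, where $\mathfrak{g}_{4,5}^{1,1}$ has no three-dimensional generating subspace by Corollary \ref{nonex} and hence $k=0$ rather than $4$; your parameter-range bookkeeping should exclude this point explicitly.
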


The next theorem follows from \cite{BerZub21}, Table \ref{Tab:list},  Propositions \ref{nonstr}, \ref{ideal}, \ref{afterrem} and proofs of Propositions \ref{solv2} -- \ref{equiv}.

\begin{theorem}
\label{mmm5}
Let $(\mathfrak{g},[\cdot,\cdot])$ and $\mathfrak{q}\subset \mathfrak{g}$ be a four-dimensional Lie algebra and a three-dimensional subspace, generating $\mathfrak{g}$ by the Lie bracket $[\cdot,\cdot]$.
	
1. If $\mathfrak{g}=2\mathfrak{g}_{2,1}$ then two following cases are posible: 
	
1) $\mathfrak{q}$ contains a one-dimensional ideal of the Lie algebra $\mathfrak{g}$;
	
2) $\mathfrak{q}$ has a basis $(e_1,e_2,e_3=[e_1,e_2])$ such that $[e_1,e_2]=0$ and $0\neq [e_2,e_3]\parallel e_3$. 
	
2. If  $\mathfrak{g}$ is one of the Lie algebras $\mathfrak{g}_{3,2}\oplus\mathfrak{g}_{1}$, $\mathfrak{g}_{3,4}^{\alpha}\oplus\mathfrak{g}_{1}$, $0\leq\alpha\neq 1$, $\mathfrak{g}=\mathfrak{g}_{4,3}$,  then three following cases are posible: 
	
1) $\mathfrak{C(g)}\subset\mathfrak{q}$;
	
2) $\mathfrak{q}$ contains a one-dimensional noncentral ideal of the Lie algebra $\mathfrak{g}$;
	
3) $\mathfrak{q}$ has a basis  $(e_1,e_2,e_3=[e_1,e_2])$ such that $[e_2,e_3]=0$.
	
3. If $\mathfrak{g}$ is one of the Lie algebras $\mathfrak{g}_{3,5}^{\alpha}\oplus\mathfrak{g}_{1}$, $\alpha\geq 0$,
$\mathfrak{g}_{4,1}$, then two following cases are posible: 
	
1)  $\mathfrak{C(g)}\subset\mathfrak{q}$;
	
2) $\mathfrak{q}$  has a basis $(e_1,e_2,e_3=[e_1,e_2])$ such that $[e_2,e_3]=0$.
	
4. If $\mathfrak{g}=\mathfrak{g}_{3,6}\oplus\mathfrak{g}_{1}$ then three following cases are posible: 
	
1) $\mathfrak{g}_1\subset\mathfrak{q}$; 
	
2) $\mathfrak{q}$  has a basis $(e_1,e_2,e_3=[e_1,e_2])$ such that $0\neq [e_2,e_3]\parallel e_2$; 
	
3) $\mathfrak{q}$ has a basis  $(e_1,e_2,e_3=[e_1,e_2])$ such that $0\neq [e_2,e_3]\parallel e_1$.
	
5. If $\mathfrak{g}=\mathfrak{g}_{3,7}\oplus\mathfrak{g}_{1}$  then two following cases are posible: 
	
1) $\mathfrak{C(g)}\subset\mathfrak{q}$;
	
2) $\mathfrak{q}$  has a basis $(e_1,e_2,e_3=[e_1,e_2])$ such that $0\neq [e_2,e_3]\parallel e_1$.
	
6. If $\mathfrak{g}$ is one of the Lie algebras $\mathfrak{g}_{4,2}^{\alpha}$, $\alpha\notin\{0,1\}$, $\mathfrak{g}_{4,4}$,
$\mathfrak{g}_{4,5}^{\alpha,\beta}$, $-1<\alpha<\beta<1$, $\alpha\beta\neq 0$ or $\alpha=-1$, $0<\beta\leq 1$, $\mathfrak{g}_{4,6}^{\alpha,\beta}$,  $\alpha>0$, $\beta\in\mathbb{R}$, then two following cases are posible: 
	
1)  $\mathfrak{q}$ contains a one-dimensional noncentral ideal of the Lie algebra $\mathfrak{g}$;
	
2) $\mathfrak{q}$  has a basis $(e_1,e_2,e_3=[e_1,e_2])$ such that $[e_2,e_3]=0$.
	
7. If $\mathfrak{g}$ is one of the Lie algebras $\mathfrak{g}_{4,7}$, $\mathfrak{g}_{4,8}^{\alpha}$, $-1<\alpha<1$, $\alpha\neq 0$, 
$\mathfrak{g}_{4,9}^{\alpha,\beta}$, $\alpha\geq 0$, $\beta\in\mathbb{R}$, then two following cases are posible: 
	
1)  $\mathfrak{q}$ contains a one-dimensional noncentral ideal of the Lie algebra $\mathfrak{g}$;
	
2)  $\mathfrak{q}$ has a basis $(e_1,e_2,e_3=[e_1,e_2])$ such that $C_{23}^1\neq 0$, $C_{23}^2=0$.
	
8. If  $\mathfrak{g}=\mathfrak{g}_{4,8}^{-1}$ or $\mathfrak{g}=\mathfrak{g}_{4,9}^{0}$ then two following cases are posible: 
	
1) $\mathfrak{C(g)}\subset\mathfrak{q}$;
	
2) $\mathfrak{q}$ has a basis  $(e_1,e_2,e_3=[e_1,e_2])$ such that $C_{23}^1\neq 0$, $C_{23}^2=0$. 
	
9. If  $\mathfrak{g}=\mathfrak{g}_{4,8}^{0}$  then two following cases are posible: 
	
1) $\mathfrak{q}$ contains a one-dimensional noncentral ideal of the Lie algebra $\mathfrak{g}$;
	
2)  $\mathfrak{q}$  has a basis $(e_1,e_2,e_3=[e_1,e_2])$ such that $C_{23}^1=C_{23}^2=0$. 
\end{theorem}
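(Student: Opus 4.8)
The plan is to read the theorem as a consolidation of the equivalence-class analysis already carried out in Propositions \ref{solv2}--\ref{equiv}, so that the proof reduces to extracting, for each equivalence class of a three-dimensional generating subspace $\mathfrak{q}$, the form of the bracket $[e_2,e_3]$ in a normal-form basis. The guiding dichotomy comes from Lemma \ref{base}, Proposition \ref{ideal}, and Corollary \ref{act}: either $\mathfrak{q}$ contains a one-dimensional ideal of $\mathfrak{g}$, which must then coincide with $\mathfrak{q}_1=\mathfrak{q}\cap\mathfrak{N}(\mathfrak{q})=\langle e_2\rangle$, or $\mathfrak{q}$ contains no such ideal and admits a basis $(e_1,e_2,e_3=[e_1,e_2])$ as in Lemma \ref{base}. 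I would therefore go through the algebras of items 1--9 one at a time, recalling from Proposition \ref{equiv} (and from Propositions \ref{solv2}, \ref{solv3,6} for the exceptional algebras $2\mathfrak{g}_{2,1}$ and $\mathfrak{g}_{3,6}\oplus\mathfrak{g}_1$) the enumeration of the equivalence classes, and then classifying each class according to this dichotomy.

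For a class in which $\langle e_2\rangle=\mathfrak{q}_1$ is a one-dimensional ideal, the relevant data are extracted from Table \ref{Tab:list} together with the explicit bases constructed in the proofs of Propositions \ref{equiv0}, \ref{solv2}, and \ref{equiv}. If the ideal is central, then $e_2\in\mathfrak{C}(\mathfrak{g})$, so $[e_2,e_3]=0$; by Proposition \ref{cq} this is exactly the alternative $\mathfrak{C}(\mathfrak{g})\subset\mathfrak{q}$ appearing in items 2, 3, 5, and 8 (and in the guise $\mathfrak{g}_1\subset\mathfrak{q}$ in item 4). If the ideal is noncentral, it is recorded as the corresponding alternative of items 2, 6, 7, and 9. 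The count of inequivalent ideal-containing classes is the number of pairwise inequivalent one-dimensional ideals, as determined in the proof of Proposition \ref{equiv} from the eigenvalue data of $\ad(-E_4)$ (or $\ad E_3$) on $\mathfrak{g}'$.

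For a class in which $\mathfrak{q}$ contains no one-dimensional ideal, I would use Propositions \ref{ideal}, \ref{afterrem}, and \ref{nonstr}: such a $\mathfrak{q}$ equals $\mathfrak{p}_0\oplus[\mathfrak{p}_0,\mathfrak{p}_0]$ for a two-dimensional generating subspace $\mathfrak{p}_0=\langle e_1,e_2\rangle$, and by \cite{BerZub21} the subspaces $\mathfrak{p}_0$ fall into finitely many known equivalence classes. Substituting their normal forms and using Proposition \ref{const} produces the structure constants of $[e_2,e_3]$. This yields $[e_2,e_3]=0$ for the algebras with a three-dimensional commutative ideal (items 2, 3, 6), the condition $C_{23}^1\neq 0,\ C_{23}^2=0$ for $\mathfrak{g}_{4,7}$, for $\mathfrak{g}_{4,9}^{\alpha}$ with $\alpha\geq 0$, and for $\mathfrak{g}_{4,8}^{\alpha}$ with $-1\le\alpha<1,\ \alpha\neq 0$ (items 7 and 8), and $C_{23}^1=C_{23}^2=0$ for $\mathfrak{g}_{4,8}^0$ (item 9) and for $\mathfrak{g}_{4,10}$; for $2\mathfrak{g}_{2,1}$ and $\mathfrak{g}_{3,6}\oplus\mathfrak{g}_1$ the nonzero forms $[e_2,e_3]\parallel e_3$, $\parallel e_2$, $\parallel e_1$ of items 1, 4, 5 are read off from the bases displayed in the proofs of Propositions \ref{solv2} and \ref{solv3,6}, where they are governed by whether $\ad(e_2)$ acts on $\mathfrak{s}=[e_2,\mathfrak{q}]$ with real or purely imaginary eigenvalues.

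The step I expect to require the most care is the uniform bookkeeping rather than any individual computation. The explicit bases in the cited proofs are written in the original Lemma \ref{base} labelling, with $e_2\in\mathfrak{q}_1$ and $e_4=[e_1,e_3]$, whereas the theorem demands the labelling $e_3=[e_1,e_2]$; for each class one must therefore exhibit a change of basis of $\mathfrak{q}$ fixing $\langle e_2\rangle$ that moves $[e_1,e_2]$ into the role of $e_3$ and then recompute $[e_2,e_3]$. One must also keep in mind that a single qualitative alternative of the theorem may comprise several genuinely inequivalent classes — for instance the alternative $[e_2,e_3]\parallel e_1$ for $\mathfrak{g}_{3,6}\oplus\mathfrak{g}_1$ collects both the real and the imaginary eigenvalue classes $\mathfrak{q}_{0,2}$ and $\mathfrak{q}_{0,3}$ of Proposition \ref{solv3,6} — so that the final check is that the qualitative types listed in items 1--9 are exhaustive and correctly describe every class furnished by Propositions \ref{solv2}--\ref{equiv}.
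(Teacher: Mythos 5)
Your proposal is correct and takes essentially the same approach as the paper, whose proof of Theorem \ref{mmm5} consists precisely of deriving it from \cite{BerZub21}, Table \ref{Tab:list}, Propositions \ref{nonstr}, \ref{ideal}, \ref{afterrem}, and the proofs of Propositions \ref{solv2}--\ref{equiv}, i.e.\ the same case-by-case consolidation of the equivalence classes (ideal-containing, central versus noncentral, versus classes with a basis from Lemma \ref{base} and the stated form of $[e_2,e_3]$) that you describe. Your remarks on the relabelling between $e_4=[e_1,e_3]$ and $e_3=[e_1,e_2]$ and on a single alternative absorbing several inequivalent classes are accurate points of care that the paper leaves implicit.
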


The next theorem follows from \cite{BerZub21},  Proposition \ref{threed}, Corollary \ref{act}, the proofs of Propositions \ref{equiv0} и \ref{equiv},  and Theorems \ref{main1}, \ref{mmm5}.

\begin{theorem}
\label{mmm6}
Let $G,$ $\mathfrak{q}\subset\mathfrak{g},$ and $d$ be respectively a four-dimensional connected Lie group with Lie algebra $(\mathfrak{g},[\cdot,\cdot])$, a three-dimensional subspace, generating $\mathfrak{g}$ by the Lie bracket $[\cdot,\cdot]$, and arbitrary left-invariant quasimetric $G,$ defined by some seminorm $F$ on $\mathfrak{q}$. Then
	
1. Every abnormal extremal of the space $(G,d)$ is nonstrongly abnormal for the Lie algebras  $\mathfrak{g}=\mathfrak{g}_{2,1}\oplus 2\mathfrak{g}_{1}$, $\mathfrak{g}=\mathfrak{g}_{3,1}\oplus\mathfrak{g}_{1}$ and in the  cases
2,~1); 3,~1); 4,~1); 5,~1); 8,~1) of Theorem \ref{mmm5}. 
	
2. Every abnormal extremal of the space $(G,d)$ is strongly abnormal for the Lie algebras $\mathfrak{g}_{3,3}\oplus\mathfrak{g}_{1}$, $\mathfrak{g}_{4,2}^1$, $\mathfrak{g}_{4,5}^{\alpha,1}$, $\mathfrak{g}=\mathfrak{g}_{4,5}^{\alpha,\alpha}$, $-1\leq \alpha<1$, $\alpha\neq 0$, 
and in the  cases	1,~1); 2,~2); 4,~2);  6,~1); 7,~1); 9,~1) of Theorem \ref{mmm5}.
	
3. For the Lie algebra $\mathfrak{g}_{4,10}$ and in the cases 1,~2); 2,~3); 3,~2); 6,~2); 9,~2) of Theorem \ref{mmm5}, 
abnormal extremal (\ref{dif}) (and every its left shift)  of the space $(G,d)$ is nonstrongly abnormal if and only if $F_U(k(s),s,0)=1/F(0,s,0)$  for some $k(s)\in\mathbb{R}$.
	
4. In the cases 4,~3); 5,~2), 7,~2); 8,~2) of Theorem \ref{mmm5},
abnormal extremal (\ref{dif}) (and every its left shift) of the space $(G,d)$ is nonstrongly abnormal if and only if $F_U(0,s,0)=1/F(0,s,0)$.
\end{theorem}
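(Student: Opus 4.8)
The plan is to prove Theorem \ref{mmm6} by a single systematic case analysis, reading off the normal forms already catalogued in Theorem \ref{mmm5} and feeding each one into the abnormality criteria we have in hand: Corollary \ref{act}, the strong/nonstrong dichotomy of Proposition \ref{threed}, the $F_U$-conditions of Theorem \ref{main1}, and Proposition \ref{nonstr1}. The organizing principle is that every pair $(\mathfrak{g},\mathfrak{q})$ falls into exactly one of two structural situations: either $\mathfrak{q}$ contains a one-dimensional ideal of $\mathfrak{g}$, or $\mathfrak{q}$ admits a basis $(e_1,e_2,e_3=[e_1,e_2])$ from Lemma \ref{base}. Since the cases of Theorem \ref{mmm5} are precisely a refinement of this dichotomy for each isomorphism type, the proof reduces to matching every case to the correct criterion.

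First I would dispose of the ideal-containing cases by Corollary \ref{act}. Whenever $\mathfrak{q}$ contains a one-dimensional central ideal (the subcases $\mathfrak{C}(\mathfrak{g})\subset\mathfrak{q}$ and $\mathfrak{g}_1\subset\mathfrak{q}$, i.e. $\mathfrak{g}_{2,1}\oplus 2\mathfrak{g}_1$, $\mathfrak{g}_{3,1}\oplus\mathfrak{g}_1$, and cases 2,1); 3,1); 4,1); 5,1); 8,1)), Corollary \ref{act} gives that every abnormal extremal is nonstrongly abnormal, which is the first assertion. Whenever $\mathfrak{q}$ contains a one-dimensional noncentral ideal (the algebras $\mathfrak{g}_{3,3}\oplus\mathfrak{g}_1$, $\mathfrak{g}_{4,2}^1$, $\mathfrak{g}_{4,5}^{\alpha,1}$, $\mathfrak{g}_{4,5}^{\alpha,\alpha}$, together with cases 1,1); 2,2); 6,1); 7,1); 9,1)), the same corollary yields strong abnormality, covering most of the second assertion. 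The identification of which ideals are central and which are noncentral is exactly what the proofs of Propositions \ref{equiv0}--\ref{equiv} supply, so no fresh computation is needed here.

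Next I would treat the cases in which $\mathfrak{q}$ carries a Lemma \ref{base} basis. The key observation is that the alternatives of Theorem \ref{main1}, together with Proposition \ref{threed}, are governed entirely by the two structure constants $C_{23}^1,C_{23}^2$ attached to $[e_2,e_3]$, and each case of Theorem \ref{mmm5} fixes their values. When $0\neq[e_2,e_3]\parallel e_2$ (i.e. $C_{23}^1=0$, $C_{23}^2\neq 0$), a direct check that $\mathfrak{q}_1=\langle e_2\rangle\subset[e_2,\mathfrak{q}]=[\mathfrak{q}_1,\mathfrak{q}]$ places us in the strongly abnormal alternative of Proposition \ref{threed}; this settles case 4,2) and finishes the second assertion. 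When $C_{23}^1=C_{23}^2=0$ (the subcases $[e_2,e_3]=0$ and $[e_2,e_3]\parallel e_3$, namely $\mathfrak{g}_{4,10}$ and cases 1,2); 2,3); 3,2); 6,2); 9,2)), the first alternative of Theorem \ref{main1} gives the third assertion. When $C_{23}^1\neq 0$, $C_{23}^2=0$ (i.e. $0\neq[e_2,e_3]\parallel e_1$, the cases 4,3); 5,2); 7,2); 8,2)), substituting $C_{23}^2=0$ into the second alternative of Theorem \ref{main1} collapses the criterion to $F_U(0,s,0)=1/F(0,s,0)$, yielding the fourth assertion; the corresponding statement for $\mathfrak{g}_{4,8}^1$ is provided directly by Proposition \ref{nonstr1}.

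The main obstacle I anticipate is bookkeeping rather than ideas: for each case of Theorem \ref{mmm5} one must verify that the prescribed normal form genuinely realizes the claimed values of $C_{23}^1$ and $C_{23}^2$ in a basis satisfying (\ref{fghl}), and, in the ideal-containing cases, that the ideal really has the stated central or noncentral character. Because the explicit bases are already pinned down in the proofs of Propositions \ref{solv2}--\ref{equiv}, this amounts to reading off the listed commutators and determining which of $e_1,e_2,e_3$ receives the image of $\ad(e_2)$; the only genuine subtlety is keeping the two reindexing conventions for the Lemma \ref{base} basis compatible, so that each $C_{23}^k$ consistently denotes the component of $[e_2,e_3]$ along $e_k$.
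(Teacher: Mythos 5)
Your proposal is correct and follows essentially the same route as the paper, which derives Theorem \ref{mmm6} precisely by combining Corollary \ref{act} and Proposition \ref{threed} for the ideal-containing cases, Theorem \ref{main1} (specialized to the values of $C_{23}^1,C_{23}^2$ fixed by each case of Theorem \ref{mmm5}) for the $F_U$-criteria, and Proposition \ref{nonstr1}. Your write-up is in fact more explicit than the paper's one-line citation of these ingredients, and the case-by-case matching you describe is exactly the intended bookkeeping.
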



\begin{thebibliography}{99}
	
\bibitem{Ber1}	
{\sl Berestovskii~V.N.} Homogeneous spaces with inner metric. Soviet Math. Dokl., 38(1989), 60--63.	
	
\bibitem{PBGM}	
{\sl Pontryagin~L.S., Boltyanskii~ V.G., Gamkrelidze~R.V., Mishchenko~E.F.} The mathematical
theory of optimal processes. New York-London: Interscience Publishers John Wiley \& Sons,
Inc., 1962.

\bibitem{Mil}
{\sl Milnor~J.} Curvatures of left invariant metrics on Lie groups. Adv. Math., 21(1976), 293--329.

\bibitem{Ber88}	
{\sl Berestovskii~V.N.} Homogeneous spaces with inner metric, I. Siber. Math. J., 29:6(1988), 17--29.

\bibitem{Ber}
{\sl Berestovskii~V.N.} Curvatures of homogeneous sub-Riemannian manifolds. European J. of Mathematics, 3(2017), 788--807.

\bibitem{BerZub}
{\sl Berestovskii~V.N., Zubareva~I.A.} PMP, (co)adjoint representation and normal geodesics of left-invariant (sub-)Finsler 
metrics on Lie groups. Chebyshevskii sbornik, 21:2(2020), 43--64. DOI 10.22405/2226-8383-2020-21-2-43-64

\bibitem{M}
{\sl Mubarakzyanov~G.M.} On solvable Lie algebras (in Russian). Izv. vuzov. Matem., 1(1963), 114--123.

\bibitem{BR}
{\sl Biggs~R., Remsing~C.} On the Classification of Real Four-Dimensional Lie Groups. Journal of Lie Theory, 26(2016), 1001--1035.

\bibitem{Patera}
{\sl Patera~J., Winternitz~P.} Subalgebras of real three-- and four--dimensional Lie algebras. J. Math. Phys., 18:7(1977), 1449--1455.

\bibitem{BerZub21}
{\sl Berestovskii~V.N., Zubareva~I.A.}
Abnormal extremals of left-invariant sub-Finsler quasimetrics on four-dimensional Lie groups, 
Siber. Math. J. 62:3(2021), 481--501.

\bibitem{LS}
{\sl Liu~W., Sussmann~H.} Shortest parts for sub--Riemannian metrics on rank--two distributions, Memoirs of the Amer. Math. Soc.  118:564(1995).

\bibitem{Sach}
{\sl Agrachev~A.A., Sachkov~Yu.L.} Control theory from the geometric viewpoint. Berlin--Heidelberg: Springer-Verlag, 2004.

\bibitem{AB}
{\sl Agrachev~A., Barilari~D.} Sub-Riemannian structures on 3D Lie groups. Journal of Dynamical and Control Systems, 18:1(2012),
 21--44. 

\end{thebibliography}
\end{document}